\documentclass[12pt]{amsart}

\newcommand{\tom}{\to_{M}}

\newcommand{\pdown}{\p_{\down}}

\newcommand{\side}{\te{side}}
\newcommand{\G}{\Gamma}
\newcommand{\simw}{\sim_{w}}

\newcommand{\tow}{\to w}

\renewcommand{\b}{\mathbf{b}}

\newcommand{\R}{\mathcal{R}}

\newcommand{\M}{\ensuremath{\mathcal{M}}}

\newcommand{\dsum}{\di\sum}

\newcommand{\eluw}{\ell(u, w)}

\newcommand{\wtr}{\wt{R}}

\newcommand{\ins}{\in S}

\newcommand{\dep}[1]{{\vrule width 0pt height 0pt depth #1}}

\newcommand{\elx}{\ensuremath{\ell(x)}}
\newcommand{\ely}{\ensuremath{\ell(y)}}

\newcommand{\dlw}{\ensuremath{D_{L}(w)}}
\newcommand{\drw}{\ensuremath{D_{R}(w)}}
\newcommand{\wdlw}{\ensuremath{W_{D_{L}(w)}}}
\newcommand{\wdrw}{\ensuremath{W_{D_{R}(w)}}}

\newcommand{\lew}{\le w}

\newcommand{\elw}{\ensuremath{\ell(w)}}
\newcommand{\elu}{\ensuremath{\ell(u)}}

\newcommand{\elv}{\ensuremath{\ell(v)}}

\newcommand{\elws}{\ensuremath{\ell(ws)}}

\newcommand{\elus}{\ensuremath{\ell(us)}}

\newcommand{\elxy}{\ensuremath{\ell(x, y)}}

\newcommand{\down}{\downarrow}

\newcommand{\eluv}{\ensuremath{\ell(u, v)}}

\newcommand{\wh}{\ensuremath{\widehat}}
\newcommand{\el}{\ensuremath{\ell}}

\DeclareFontFamily{U}{mathx}{\hyphenchar\font45}
\DeclareFontShape{U}{mathx}{m}{n}{
      <5> <6> <7> <8> <9> <10>
      <10.95> <12> <14.4> <17.28> <20.74> <24.88>
      mathx10
      }{}
\DeclareSymbolFont{mathx}{U}{mathx}{m}{n}
\DeclareFontSubstitution{U}{mathx}{m}{n}
\DeclareMathAccent{\widecheck}{0}{mathx}{"71}

\newcommand{\tw}{\textwidth}

\renewcommand{\kill}[1]{}
\newcommand{\dummy}[1]{\mbox{}}

\makeatletter
\newcommand{\xequal}[2][]{\ext@arrow 0055{\equalfill@}{#1}{#2}}
\def\equalfill@{\arrowfill@\Relbar\Relbar\Relbar}
\makeatother

\newcommand{\mto}{\mapsto}

\newcommand{\Set}[2]{\ensuremath{\left\{{#1}\,\middle|\,{#2}\right\}}}

\renewcommand{\k}{\ensuremath{\ol{\mathrm{P}}}}

\newcommand{\h}{\hline}

\renewcommand{\k}[1]{\ensuremath{\left({#1}\right)}}

\newcommand{\ds}{\dots}

\newcommand{\bca}{\begin{cases}}
\newcommand{\eca}{\end{cases}}

\newcommand{\A}{\mathcal{A}}

\newcommand{\mug}{\ensuremath{\infty}}

\newcommand{\Gam}{\Gamma}

\newcommand{\ff}[2]{\ensuremath{\di\fr{#1}{#2}}}

\newcommand{\bpic}{\begin{picture}}\newcommand{\epic}{\end{picture}}

\newcommand{\beda}{\begin{edaenumerate}}
\newcommand{\eeda}{\end{edaenumerate}}

%
%

%
%
%

%



\newcommand{\g}{\ensuremath{\mathbf{g}}}

\newcommand{\cd}{\cdots}



\newcommand{\st}{\strut}

\newcommand{\q}{\quad}

\newcommand{\up}{\uparrow}

\newcommand{\too}{\longrightarrow}

\newcommand{\bq}{\begin{quote}}\newcommand{\eq}{\end{quote}}
\newcommand{\gam}{\gamma}

\newcommand{\sig}{\sigma}

\newcommand{\be}{\begin{enumerate}}\newcommand{\ee}{\end{enumerate}}
\newcommand{\bce}{\begin{center}}\newcommand{\ece}{\end{center}}
\newcommand{\bde}{\begin{description}}\newcommand{\ede}{\end{description}}
\newcommand{\bri}{\begin{flushright}}\newcommand{\eri}{\end{flushright}}
\newcommand{\bb}{\begin{block}}\newcommand{\eb}{\end{block}}
\newcommand{\bt}{\begin{thm}}\newcommand{\et}{\end{thm}}
\newcommand{\bpf}{\begin{proof}}\newcommand{\epf}{\end{proof}}
\newcommand{\bex}{\begin{ex}}\newcommand{\eex}{\end{ex}}
\newcommand{\bexr}{\begin{exr}}\newcommand{\eexr}{\end{exr}}
\newcommand{\bft}{\begin{fact}}\newcommand{\eft}{\end{fact}}
\newcommand{\brk}{\begin{rmk}}\newcommand{\erk}{\end{rmk}}
\newcommand{\ba}{\begin{align*}}\newcommand{\ea}{\end{align*}}
\newcommand{\bexe}{\begin{exe}}\newcommand{\eexe}{\end{exe}}
\newcommand{\tn}{\textnormal}

\newcommand{\bit}{\begin{itemize}}\newcommand{\eit}{\end{itemize}}
\newcommand{\os}{\overset}

\newcommand{\bcm}{}
\newcommand{\ol}{\overline}\newcommand{\ul}{\underline}
\newcommand{\hf}{\hfill}
\newcommand{\ci}{\CIRCLE}
\newcommand{\fr}{\frac}

\newcommand{\nn}{\ensuremath{\mathbf{N}}}
\newcommand{\qq}{\ensuremath{\mathbf{Q}}}

\newcommand{\zz}{\ensuremath{\mathbf{Z}}}

\newcommand{\bd}{\begin{defn}}\newcommand{\ed}{\end{defn}}
\newcommand{\bp}{\begin{prop}}\newcommand{\ep}{\end{prop}}
\newcommand{\p}{\ensuremath{\pi}}
\newcommand{\eh}{\emph}\newcommand{\al}{\alpha}
\newcommand{\sub}{\subseteq}

\newcommand{\fb}{\fbox}
\newcommand{\mb}{\mbox}
\newcommand{\te}{\text}\newcommand{\ph}{\phantom}
\newcommand{\wt}{\widetilde}

\newcommand{\then}{\Longrightarrow}

\newcommand{\di}{\displaystyle}\renewcommand{\a}{\ensuremath{\bm{a}}}
\renewcommand{\b}{\ensuremath{\bm{b}}}\renewcommand{\c}{\ensuremath{\bm{c}}}
\renewcommand{\d}{\ensuremath{\bm{d}}}

\newcommand{\f}{\frac}
\newcommand{\x}{\ensuremath{\bm{x}}}

\newcommand{\np}{\newpage}

\renewcommand{\b}{\beta}
\newcommand{\av}{\tn{av}}

\renewcommand{\a}{\alpha}

\renewcommand{\up}{\uparrow}

\renewcommand{\int}{\in T}

\renewcommand{\x}{\mathbf{x}}


\renewcommand{\d}{\delta}
\renewcommand{\P}{\mathcal{P}}
\newcommand{\Q}{\mathcal{Q}}

\renewcommand{\d}{\delta}

\usepackage[dvipdfmx]{graphicx}
\usepackage[dvipsnames]{xcolor}
\usepackage{float,afterpage}
\usepackage{asymptote,layout}
\usepackage{wrapfig,epic}

\usepackage{geometry}
\usepackage{exscale,latexsym,bm}
\usepackage{amssymb,enumerate,amsmath,amsthm,amsfonts}
\geometry{a4paper}
\usepackage{verbatim,fancybox,wasysym,fancyhdr,type1cm}
\usepackage[frame,all,poly,curve,knot,arrow]{xy}
\usepackage{colortbl}
\usepackage{boxedminipage}
\usepackage{multirow}

\graphicspath{{./figs/}}
\theoremstyle{definition}
\newtheorem{thm}{Theorem}[section]
\newtheorem{lem}[thm]{Lemma}
\newtheorem{prop}[thm]{Proposition}\newtheorem{cor}[thm]{Corollary}
\newtheorem{cj}[thm]{Conjecture}

\newtheorem{exr}[thm]{Exercise}
\newtheorem{ob}[thm]{Observation}

\newtheorem{ex}[thm]{Example}

\newtheorem{ques}[thm]{Question}

\newtheorem{defn}[thm]{Definition}\newtheorem{rmk}[thm]{Remark}
\newtheorem{fact}[thm]{Fact}
\newtheorem{block}[thm]{}
\newtheorem*{exe}{Exercise}



\newcommand{\orapw}{\ora{\P_{w}}}
\newcommand{\orar}{\ora{R}}
\renewcommand{\g}{\gamma}

\renewcommand{\R}{\mathcal{R}}
\newcommand{\rh}{\rho}
\newcommand{\ora}{\overrightarrow}
\newcommand{\ova}{\overrightarrow}
\newcommand{\olr}{\overrightarrow{R}}

\renewcommand{\tom}{\to_{h}}
\newcommand{\indw}{\tn{ind}_{w}}
\renewcommand{\dep}{\tn{dep}}

\newcommand{\md}{\tn{mid}}

\newcommand{\self}{\tn{self}}

\newcommand{\out}{\tn{out}}

\renewcommand{\a}{\texttt{a}}
\renewcommand{\b}{\texttt{b}}

\renewcommand{\c}{\texttt{c}}
\renewcommand{\d}{\texttt{d}}

\title[]{Weighted counting of Bruhat paths by shifted $R$-polynomials}
\author[Masato Kobayashi]{Masato Kobayashi$^{*}$}
\date{\today}
\address{Department of Engineering\\
Kanagawa University, 3-27-1 Rokkaku-bashi, Yokohama 221-8686, Japan.}
\keywords{Bruhat graph, Bruhat paths, Coxeter group, 
Deodhar inequality, Poincar\'{e} polynomials, $R$-polynomials, 
reflection order}
\thanks{*Department of Engineering, Kanagawa University, Japan}
\subjclass[2010]{Primary:20F55;\,Secondary:51F15}
\email{masato210@gmail.com}
\begin{document}
\begin{abstract}
We revisit $R$-polynomials with introducing the new idea ``shifted $R$-polynomials" (or Bruhat weight) for all Bruhat intervals in finite Coxeter groups. 
Then, we apply these polynomials to weighted counting of Bruhat paths. Further, we prove a new criterion of irregularity of lower intervals as analogy of Carrell-Peterson's and Dyer's results.
Also, we present the upper bound of shifted $R$-polynomials for Bruhat intervals of fixed length by Jacobsthal numbers.
\end{abstract}
\maketitle
\tableofcontents
\section{Introduction}\label{s1}

\renewcommand{\A}{\mathbb{A}}
\renewcommand{\M}{\mathbb{M}}

\subsection{Kazhdan-Lusztig polynomials and $R$-polynomials}

The motivation of this article is to better understand 
\eh{Kazhdan-Lusztig} (KL) \eh{polynomials} which they introduced in 1979 \cite{kl}. This is a family of polynomials over nonnegative integer coefficients. Although these polynomials originated from representation theory of Coxeter groups, Hecke algebras and geometry of Schubert varieties, they have been an important topic in algebraic combinatorics as well since then. 
In particular, Bruhat intervals forms a nice subclass of Eulerian posets so that the framework of Eulerian posets ($f$-vector, \a\b-index, $\ds$) works well. 
Here, let us mention the five family of polynomials which play 
some role to investigate KL polynomials:
\begin{itemize}
\item $R$-polynomial
\item $\wt{R}$-polynomial
\item \a\b-, \c\d-index
\item complete \a\b-, \c\d-index
\item Poincar\'{e} polynomial
\end{itemize}
Among these polynomials, only $R$-polynomials have negative coefficients. However, $R$-polynomials satisfy some relations together with KL polynomials:
\[
\dsum_{v\in [u, w]}R_{uv}(q)P_{vw}(q)=q^{\eluw}P_{vw}(q^{-1}).
\]
Thus, it is crucial to better understand coefficients of $R$-polynomials as well. For this reason, we decided to revisit classical $R$-polynomials hoping to find some interpretation by nonnegative integers.
Our idea is simple:we introduce shifted $R$-polynomials (or ``Bruhat weight"); this is just shifting of its variable $q\mto q+1$. We will then show the connection between this shifted $R$-polynomials and $\wt{R}$-polynomials which have nonnegative coefficients so that we can discuss 
weighted counting of Bruhat paths.

%
%


\subsection{Main results}
Main results of this article are the following:
\begin{itemize}
	\item Theorem \ref{th1}: property of Bruhat weight for lower intervals 
		\item Theorem \ref{th2}: another criterion of irregularity of lower intervals
	\item Theorem \ref{th3}: higher Deodhar inequality
	\item Theorem \ref{th4}: the upper bound of shifted $R$-polynomials
	\item Corollary \ref{c1}: 
	the upper bound of Bruhat size of shifted $R$-polynomials
	by Jacobsthal (dihedral) numbers
\end{itemize}
Theorems \ref{th1}, \ref{th4}, Corollary \ref{c1} are new 
while we present Theorems \ref{th2}, \ref{th3} as new interpretations of several known results (Carrell-Peterson, Dyer, the author).

\subsection{organization of this article}

Section \ref{s2} begins the topic with irregularity of Bruhat graph and Poincar\'{e} polynomials.
Section \ref{s3} is all devoted to 
the main discussions on $R$-polynomials, $\wt{R}$-polynomials, shifted $R$-polynomials, and Bruhat weight for edges, Bruhat paths and intervals. Along the way, we provide many examples. 
Section \ref{s4} proves the upper bound of shifted $R$-polynomials as an analogy of the upper bound of $\wt{R}$-polynomials by Fibonacci polynomials.
We end in Section \ref{s5} with recording several ideas for further development of our ideas.



\section{Irregularity of Bruhat graphs}\label{s2}

\subsection{preliminaries on Coxeter groups}

Throughout this article, we denote by $W=(W, S, T, \el,\le)$ a Coxeter system with $W$ the underlying Coxeter group, $S$ its Coxeter generators, $T$ the set of its reflections, $\el$ the length function, $\le$ Bruhat order. Moreover, assume that $W$ is \eh{finite}. 
Unless otherwise noticed, $u, v, w, x, y$ are elements of $W$, $r, s\in S$, $t\in T$ and $e$ is the unit of $W$.
The symbol $\el(u, v)$ means $\el(v)-\elu$ for $u\le v$. 
A \eh{Bruhat interval} is a subposet of $W$ of the form 
\[
[u, w]=\{v\in W\mid u\le v\le w\}.
\]
By $f\le g$ for polynomials $f, g\in\nn[q]$, we mean 
$[q^{i}](f)\le [q^{i}](g)$ for each $i$ where $[q^{i}](P(q))$ denotes the coefficient of $q^{i}$ in a polynomial $P(q)$.

\subsection{Boolean, dihedral posets and Poincar\'{e} polynomial}

The set of all Bruhat intervals forms a subclass of Eulerian posets.
In particular, each lower interval $[e, w]$ is 
Eulerian graded by the length function $v\mto\elv$.

\begin{defn}
The \eh{Poincar\'{e} polynomial} for $w$ is 
\[
\P_{w}(q)=
\sum_{v\le w}q^{\elv}.
\]
\end{defn}
This is the rank generating function of $[e,w]$. 
Observe that 
\[
\P_{w}(-1)=
\sum_{v\le w}(-1)^{\elv}
=
\begin{cases}
	1&w=e,\\
	0&w\ne e.\\
\end{cases}
\]

There are two important classes of Eulerian posets:
 \eh{Boolean} and \eh{dihedral}.
Let $B_{n}$ and $D_{n}$ denote the Boolean and dihedral poset of 
rank $n$, respectively;
we understand that the Boolean or dihedral poset of rank 0 is 
the trivial poset.
Note that $B_{n}=D_n$ for $n=0, 1, 2$ while $B_{n}\ne D_n$ for $n\ge3$.
These posets can be realized as Bruhat intervals (in fact, as lower intervals).
Indeed, Boolean and dihedral intervals are ``extremal" lower intervals 
in the following sense:
\begin{prop}\label{p1}
For any $w$ such that $\elw=n\ge1$, we have 
\[
1+2(q+\cd+q^{n-1})+q^{n}\le \P_{w}(q)\le (1+q)^{n}	
\]
coefficientwise.
In particular, $|D_{n}|=2n\le |[e, w]|\le 2^{n}=|B_{n}|$.
\end{prop}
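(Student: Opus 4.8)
The plan is to prove the two inequalities separately, reading off each side as the rank generating function of the extremal posets: the left-hand side is $\P_{D_{n}}(q)=1+2q+\cd+2q^{n-1}+q^{n}$, and the right-hand side is $\P_{B_{n}}(q)=(1+q)^{n}=\sum_{k}\binom{n}{k}q^{k}$. So it suffices to show, rank by rank, that the number $a_{k}:=[q^{k}]\P_{w}(q)$ of elements $v\le w$ with $\el(v)=k$ satisfies $\,[q^{k}]\P_{D_{n}}(q)\le a_{k}\le \binom{n}{k}$ for each $k$.

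For the upper bound I would invoke the subword property of Bruhat order. Fix a reduced word $w=s_{1}\cd s_{n}$. Every $v\le w$ with $\el(v)=k$ has a reduced expression occurring as a subword $s_{i_{1}}\cd s_{i_{k}}$ with $1\le i_{1}<\cd<i_{k}\le n$, so the length-$k$ elements of $[e,w]$ form a subset of the image of the map sending an index set $\{i_{1}<\cd<i_{k}\}$ to $s_{i_{1}}\cd s_{i_{k}}$, a map defined on the $\binom{n}{k}$ size-$k$ subsets of $\{1,\ldots,n\}$. Hence $a_{k}\le\binom{n}{k}$, that is $\P_{w}(q)\le(1+q)^{n}$ coefficientwise. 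This half is routine.

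For the lower bound the two endpoints are immediate: $a_{0}=1$ since $e$ is the unique element of length $0$, and $a_{n}=1$ since $w$ is the unique element of $[e,w]$ of length $n$ (equal length forces equality in Bruhat order). The content is the claim $a_{k}\ge2$ for $1\le k\le n-1$, and here I would exploit that $[e,w]$ is Eulerian, as recorded above. In an Eulerian poset every rank-$2$ interval $[x,y]$ is a diamond, i.e. contains exactly two elements strictly between $x$ and $y$: if $r$ denotes that number, the M\"obius recursion gives $\mu(x,y)=r-1$, while the Eulerian condition forces $\mu(x,y)=(-1)^{2}=1$, so $r=2$. Now take any saturated chain $e=v_{0}\lessdot v_{1}\lessdot\cd\lessdot v_{n}=w$, so $\el(v_{j})=j$. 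For fixed $k$ with $1\le k\le n-1$, apply the diamond property to the rank-$2$ interval $[v_{k-1},v_{k+1}]$: besides $v_{k}$ it contains a second element $v_{k}'$ with $v_{k-1}\lessdot v_{k}'\lessdot v_{k+1}$ and $\el(v_{k}')=k$, and certainly $v_{k}'\in[e,w]$. Thus $a_{k}\ge2$, which finishes the lower bound and hence the proposition.

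The main obstacle, really the only nontrivial point, is the lower bound, and it rests entirely on the thinness of Eulerian Bruhat intervals; the short M\"obius computation above is exactly where the Eulerian hypothesis is consumed. Everything else, namely the subword surjection and the endpoint counts, is bookkeeping.
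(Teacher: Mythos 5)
Your proof is correct and follows essentially the same route as the paper: the upper bound via the subword property of a fixed reduced word, and the lower bound via the fact that rank-$2$ intervals in an Eulerian poset are diamonds, applied to produce a second element at each intermediate rank. The only cosmetic differences are that you run the diamond argument along a fixed saturated chain (the paper instead takes an arbitrary rank-$k$ element and its covers) and that you verify the diamond property by the M\"obius computation rather than citing it, so there is nothing to change.
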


\begin{proof}
Let $v\in [e, w]$ such that $0<\elv=k<n$. Then 
there exist some $v_{0}, v_{1}\in [e, w]$ such that 
$v_{0}<v<v_{1}$ and $\el(v_{0}, v)=\el(v, v_{1})=1$
since $[e, w]$ is graded.
Now $[v_{0}, v_{1}]$ is an interval of length 2 and 
every such an interval in any Eulerian poset consists of exactly four elements.
So there exists a unique $v'$ such that 
$v_{0}<v'<v_{1}$ and $v'\ne v$. Thus we have 
\[
|\{u\in[e,w]\mid \elu=k\}|\ge 2 
\]
which proves the first inequality. 
To show the second one, choose a reduced word $s_{1}\cd s_{n}$ for $w$.
For each $v\in [e, w]$ with $\elv=k$, there is a reduced subword of this word for $v$ with $n-k$ simple reflections deleted:
\[
v=s_{1}\cd \wh{s_{i_{1}}}\cd \wh{s_{i_{n-k}}}\cd s_{n}
\q\text{(reduced)}.
\]
The number of such words is at most $\binom{n}{n-k}=\binom{n}{k}$.
\end{proof}


\subsection{Bruhat graphs}

\bd{The \emph{Bruhat graph} of $W$ is a directed graph for vertices $w\in W$ and for edges $u\to v$. For each subset $V\sub W$, we can also consider the induced subgraph with the vertex set $V$ (Bruhat subgraph). An edge $u\to v$ is \eh{short} if $\eluv=1$. 
By $a(u, w)$ we mean the directed-graph-theoretic distance from $u$ to $w$.
}\ed 

We can make use of Poincar\'{e} polynomials even for edge counting on Bruhat graphs.
Let $V(w)=[e, w]$ and 
$E(w)=\{u\to v\mid u, v\in [e,w]\}$
 be the vertex and edge set of $[e, w]$, respectively. 
Observe that $|V(w)|=\P_{w}(1)$. 
What is more, each vertex $v\in[e, w]$ is incident to exactly $\elv$ incoming edges so that $|E(w)|=\P_{w}'(1)$ (where $\P'_{w}(q)$ is the (formal) derivative of $\P_{w}(q)$).
It follows from Proposition \ref{p1} that $2\elw\le |V(w)|\le 2^{\elw}$ and 
$\elw^{2}\le|E(w)|\le \elw 2^{\elw-1}$.
In this way, $\P_{w}(q)$ contains subtle information 
on edges of Bruhat graphs on $[e, w]$.
\begin{defn}
The \eh{average} of $\P_{w}(q)$ is $\P_{w}'(1)/\P_{w}(1)$.
\end{defn}
Often, we write $\av\P_{w}(q)=\P_{w}'(1)/\P_{w}(1)$. As seen above, 
\[
\av\P_{w}(q)=\ff{\P_{w}'(1)}{\P_{w}(1)}=\ff{|E(w)|}{|V(w)|}.\]

 
%

%
%
%
%
%

\begin{fact}[Carrell-Peterson \cite{carrell}]
\label{cp}
The following are equivalent:
	\begin{enumerate}
		\item $\av\P_{w}(q)=\elw/2$.\st
		\item $[e, w]$ is regular.
	\end{enumerate}
\end{fact}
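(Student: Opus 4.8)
The plan is to recast the equivalence entirely in terms of vertex degrees of the (undirected) Bruhat graph on $[e,w]$ and then to play a uniform \emph{lower bound} on the individual degrees against the \emph{average} degree encoded by $\av\P_{w}(q)$. For $v\in[e,w]$ write $\degv$ for the number of edges incident to $v$ inside this graph. The in-edges of $v$ are exactly the $u\to v$ with $u=vt<v$ for a reflection $t$, and there are precisely $\elv$ of them; moreover each such $u$ satisfies $u<v\le w$, hence $u\in[e,w]$, so all in-edges remain inside the interval. Writing $d^{+}(v)$ for the number of out-edges $v\to z$ with $v<z\le w$ and $v^{-1}z\in T$, we get $\degv=\elv+d^{+}(v)$. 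Summing over $v$ and counting each edge once as an in-edge and once as an out-edge gives
\[
\sum_{v\le w}\degv=2|E(w)|=2\,\P_{w}'(1),
\]
so the average total degree equals $2|E(w)|/|V(w)|=2\,\av\P_{w}(q)$.

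First I would prove (2)$\Rightarrow$(1). If the graph is regular, all vertices share a common degree $d$. The top vertex $w$ has no out-edge inside $[e,w]$ (nothing lies strictly above it) while its in-degree is $\elw$, so $d=\degw=\elw$. Comparing with the average computed above yields $\elw=2\,\av\P_{w}(q)$, that is, $\av\P_{w}(q)=\elw/2$.

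The substantive direction is (1)$\Rightarrow$(2), and for it I would invoke the uniform lower bound
\[
\degv=\elv+d^{+}(v)\ge\elw\qquad\text{for every }v\in[e,w].
\]
This follows from a Deodhar-type estimate: the number $d^{+}(v)$ of reflections $t$ with $v<vt\le w$ is at least $\elw-\elv$, which combined with the in-degree $\elv$ gives the stated bound (the minimum value $\elw$ being attained at the top vertex, where $\degw=\elw$). Granting this, the hypothesis $\av\P_{w}(q)=\elw/2$ says exactly that the average of the numbers $\degv$ over $v\in[e,w]$ equals $\elw$. Since every $\degv\ge\elw$ while their average is $\elw$, each $\degv$ must equal $\elw$; thus the graph is regular, which is (2).

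The main obstacle is the lower bound $d^{+}(v)\ge\elw-\elv$, i.e.\ the existence of enough Bruhat edges going \emph{up} from a fixed $v$ while staying $\le w$. This is genuinely stronger than the chain property (a saturated chain from $v$ to $w$ supplies only one up-edge \emph{at} $v$), and I would secure it either by citing Deodhar's inequality directly or by deriving it from the lifting/exchange properties of Bruhat order applied repeatedly at $v$. Once this combinatorial estimate is in hand, the degree-counting above closes both implications with no further computation.
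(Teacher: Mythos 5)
Your proof is correct, but there is nothing in the paper to line it up against step by step: the paper does not prove this statement, it quotes it as a Fact with a citation to Carrell--Peterson, adding only the remark that their original argument presupposed nonnegativity of Kazhdan--Lusztig coefficients (now a theorem of Elias--Williamson). What you have done is supply an actual proof, and it is a sound one. Your easy direction uses only ingredients the paper itself sets up: the in-degree of $v$ inside $[e,w]$ is exactly $\elv$ (so $|E(w)|=\P_{w}'(1)$), the handshake identity $\sum_{v\le w}\deg(v)=2\P_{w}'(1)$, and $\deg(w)=\elw$ since the top vertex has no out-edges; regularity then forces the common degree to be $\elw$ and hence $\av\P_{w}(q)=\elw/2$. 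Your hard direction rests entirely on the bound $\deg(v)=\elv+d^{+}(v)\ge\elw$, i.e.\ on Deodhar's inequality $|\{t\in T: v<vt\le w\}|\ge \el(v,w)$ applied to each pair $(v,w)$ --- and this is precisely Dyer's theorem \cite{dyer1}, the same fact the paper invokes in Section \ref{s3} for the bound $\f_{1}=\out_{w}(u)\ge\eluw$. Granting it, your averaging step (every degree at least $\elw$, average exactly $\elw$, hence all equal) is airtight, and the whole argument bypasses KL-positivity, which is what the route through Carrell--Peterson's original paper needed. One caveat: your fallback suggestion that the estimate $d^{+}(v)\ge\elw-\elv$ could be rederived ``from the lifting/exchange properties applied repeatedly'' is too optimistic --- Deodhar's inequality resisted elementary attack and was settled by Dyer (via the nil Hecke ring) and independently by Polo; you should cite it, as the paper does, rather than attempt to reprove it.
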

\begin{rmk}
Carrell-Peterson (1994) assumed that the Kazhdan-Lusztig polynomial $P_{uw}(q)$ has nonnegative coefficients for all $u\le w$.
This is now (2019 at the time of writing) true due to Elias-Williamson \cite{elias-williamson} in 2014.
\end{rmk}

In fact, $B_{n}$ and $D_{n}$ are both regular. Equivalently, they have same average which is $n/2$.


\subsection{example: 3412}

\begin{ex}
Let $e=1234$ and $w=3412$ in the type $A_{3}$ Coxeter group. 
The lower interval 
$[e, w]$ consists of 14 vertices and 29 edges (Figure \ref{f1}):
\[
\P_{w}(q)=1+3q+5q^{2}+4q^{3}+q^{4},
\]
\[
|V(w)|=\P_w{(1)}=14, \q |E(w)|=\P'_w{(1)}=29,
\]
\[
\av\P_{w}(q)=\ff{29}{14}>2=\ff{\elw}{2}.
\]
Due to Carrell-Peterson, $[e, w]$ is an irregular graph;
Precisely two of 14 vertices, $1234$ and $1324$, have degree 5 while all others have degree $4=\el(w)$.
\end{ex}

\begin{figure}
\caption{the Bruhat graph on $[1234, 3412]$}
\label{f1}
\begin{center}
\resizebox{.95\tw}{!}{
\xymatrix@C=3mm@R=23mm{
&
&&
&&
*+{3412}\ar@{<-}[dr]\ar@{<-}[dlllll]\ar@{<-}[drrrrr]
&&
&&
&\\
*+{3214}\ar@{<-}@/_10ex/[dddrrrrr]&&
&&*+{3142}\ar@{->}[ru]\ar@{<-}[dr]
&&
*+{2413}\ar@{<-}[dlllll]\ar@{<-}[drrr]
&&
&&
*+{1432}\ar@{<-}@/^10ex/[dddlllll]\\
&*+{{2314}}\ar@{->}[lu]\ar@{<-}[drrrr]
&&*+{3124}\ar@{->}[lllu]\ar@{->}[ru]\ar@{<-}[d]
&&
*+{2143}\ar@{->}[ru]&&
*+{1342}\ar@{->}[lllu]\ar@{->}[rrru]\ar@{<-}[d]
&&
*+{{1423}}\ar@{->}[ru]\ar@{<-}[dllll]&\\
&&&*+{2134}\ar@{->}[llu]\ar@{->}[rru]&&
*+{1324}\ar@{->}[llu]\ar@{->}[rru]&&
*+{1243}\ar@{->}[llu]\ar@{->}[rru]&&&\\
&&&&&*+{1234}\ar@{->}[llu]\ar@{->}[u]\ar@{->}[rru]&&&&&\\
}}
\end{center}
\end{figure}
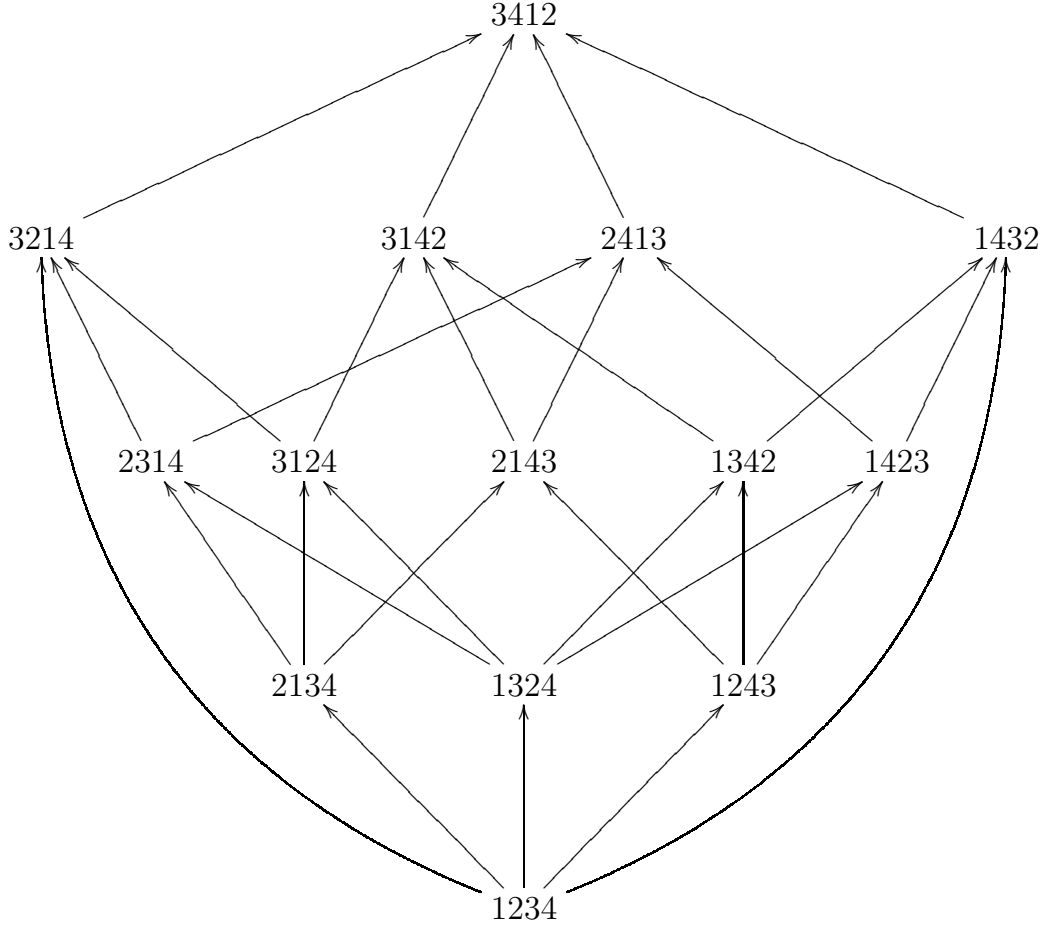

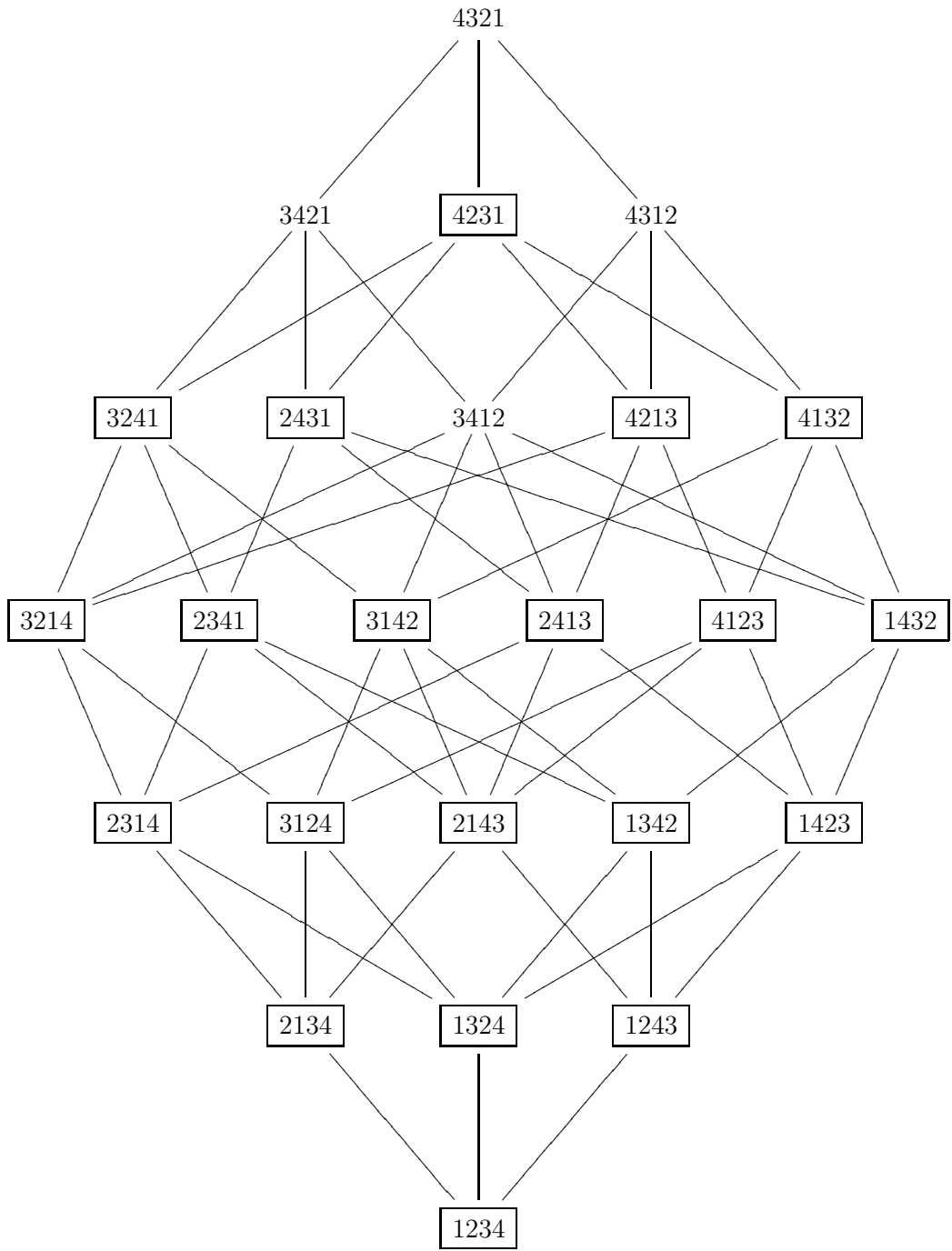
\begin{figure}
\caption{[1234, 4231] in the Hasse diagram of $A_{3}$}
\label{f2}
\begin{center}
\resizebox{.95\tw}{!}{
\xymatrix@C=-1mm@R=23mm{
&&&&&
*+{\fboxrule0pt
\fcolorbox[gray]{0}{1}
{4321}}&&&&&\\
&&&*+{3421}\ar@{-}[rru]\ar@{-}[d]		
&&
*+{\fboxsep5pt\fb{4231}}\ar@{-}[u]\ar@{-}[u]\ar@{-}[dllll]\ar@{-}[drrrr]
&&
*+{4312}\ar@{-}[llu]\ar@{-}[d]	
&&&\\
&*+{{\fboxsep5pt\fb{3241}}}\ar@{-}[rru]\ar@{-}[drrr]
&&*+{{\fboxsep5pt\fb{2431}}}\ar@{-}[rru]\ar@{-}[drrr]\ar@{-}[drrrrrrr]&&
*+{3412}\ar@{-}[llu]\ar@{-}[rru]\ar@{-}[dr]\ar@{-}[dlllll]\ar@{-}[drrrrr]
&&*+{\fboxsep5pt\fb{4213}}\ar@{-}[llu]\ar@{-}[dlllllll]&&
*+{\fboxsep5pt\fb{4132}}\ar@{-}[llu]\ar@{-}[dlllll]&\\
*+{\fboxsep5pt\fb{3214}}\ar@{-}[ru]&&
*+{\fboxsep5pt\fb{2341}}\ar@{-}[lu]\ar@{-}[ru]\ar@{-}[drrr]\ar@{-}[drrrrr]
&&*+{\fboxsep5pt\fb{3142}}\ar@{-}[ru]\ar@{-}[dr]
&&*+{\fboxsep5pt\fb{2413}}\ar@{-}[ru]\ar@{-}[dlllll]\ar@{-}[drrr]
&&*+{\fboxsep5pt\fb{4123}}\ar@{-}[lu]\ar@{-}[dlllll]\ar@{-}[dlll]
\ar@{-}[ru]&&*+{\fboxsep5pt\fb{1432}}\ar@{-}[lu]\\
&*+{\fboxsep5pt\fb{2314}}\ar@{-}[lu]\ar@{-}[ru]\ar@{-}[drrrr]
&&*+{\fboxsep5pt\fb{3124}}\ar@{-}[lllu]\ar@{-}[ru]\ar@{-}[d]
&&*+
{\fboxsep5pt\fb{2143}}\ar@{-}[ru]&&
*+{\fboxsep5pt\fb{1342}}\ar@{-}[lllu]\ar@{-}[rrru]\ar@{-}[d]
&&*+{\fboxsep5pt\fb{1423}}\ar@{-}[lu]\ar@{-}[ru]\ar@{-}[dllll]&\\
&&&*+{\fboxsep5pt\fb{2134}}\ar@{-}[llu]\ar@{-}[rru]&&
*+{\fboxsep5pt\fb{1324}}\ar@{-}[llu]\ar@{-}[rru]&&
*+{\fboxsep5pt\fb{1243}}\ar@{-}[llu]\ar@{-}[rru]&&&\\
&&&&&
*+{\fboxsep5pt\fb{1234}}\ar@{-}[llu]\ar@{-}[u]\ar@{-}[rru]&&&&&\\
}
}
\end{center}
\end{figure}

%

\subsection{monomialization technique}

If we are interested in only the average of a 
Poincar\'{e} polynomial (or more generally a polynomial over nonnegative integer coefficients), there is a useful technique to express it by a monomial as shown below.
Let $\nn$ denote the set of all nonnegative integers and 
$\qq_{\ge0}$ the set of nonnegative rational numbers.

\renewcommand{\A}{\mathbf{W}}
\renewcommand{\M}{\mathbf{M}}
\begin{defn}
\begin{align*}
	\A&=\nn[q^{\qq_{\ge0}}]=\Set{\sum_{i=0}^{d} a_{i}q^{\al_{i}}}{a_{i}, d\in \nn, \al_{i}\in\qq_{\ge0}},
	\\\mathbf{M}&=\Set{aq^{\al}}{a\in \nn, \al\in \qq_{\ge0}}
\end{align*}
We call each element of $\A$ ($\M$) a \eh{weight} (\eh{monomial weight}).
\end{defn}
For example, $q^{\elv}$ is a monomial weight (we call it the \eh{Poincar\'{e} weight} of $v$ for convenience). 

Let $f\in \A$ with $f(1)\ne 0$ (i.e. $f\ne 0$).
Define the \eh{size}, \eh{total}, \eh{average} of $f$ by 
\[
|f|=f(1), \q
\|f\|=f'(1), \q
\av(f)=\ff{\|f\|}{|f|},
\]
respectively.
Set $|0|=\|0\|=0$ and let us not define $\av(0)$.
\begin{prop}
For all $f, g\in \mathbf{W}$, we have the following:
\begin{enumerate}
	\item $|f+g|=|f|+|g|$.
	\item $\|f+g\|=\|f\|+\|g\|$.
	\item $\av(fg)=\av(f)+\av(g)$ ($f, g\ne0$).
\end{enumerate}
\end{prop}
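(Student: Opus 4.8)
The plan is to treat every weight as a genuine function of a real variable $q>0$ and to reduce all three identities to elementary calculus: additivity of evaluation at $1$, linearity of the derivative, and the Leibniz rule. The whole point is that $\mathbf{W}=\nn[q^{\qq_{\ge0}}]$ sits inside the ring of smooth functions on $(0,\infty)$, on which these facts hold verbatim even though the exponents are only rational.

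First I would fix notation, writing $f=\sum_i a_i q^{\alpha_i}$ and $g=\sum_j b_j q^{\beta_j}$ with $a_i,b_j\in\nn$ and $\alpha_i,\beta_j\in\qq_{\ge0}$, so that $|f|=f(1)=\sum_i a_i$ and $\|f\|=f'(1)=\sum_i a_i\alpha_i$, where $f'$ is the formal derivative determined by $\frac{d}{dq}q^{\alpha}=\alpha q^{\alpha-1}$ for $\alpha\in\qq_{\ge0}$. Parts (1) and (2) are then immediate: both $f\mapsto f(1)$ and $f\mapsto f'(1)$ are additive, since assembling the monomials of $f+g$ and then evaluating (respectively differentiating termwise) agrees with performing the operation on $f$ and $g$ separately. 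No obstacle arises here.

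For part (3) the key observation is that $\av(f)=\|f\|/|f|$ is the logarithmic derivative $(\log f)'$ evaluated at $q=1$, and logarithmic derivatives convert products into sums. Concretely, I would invoke the product rule $(fg)'=f'g+fg'$ together with the multiplicativity of evaluation, $(fg)(1)=f(1)g(1)$, and then divide:
\[
\av(fg)=\ff{(fg)'(1)}{(fg)(1)}=\ff{f'(1)g(1)+f(1)g'(1)}{f(1)g(1)}=\ff{f'(1)}{f(1)}+\ff{g'(1)}{g(1)}=\av(f)+\av(g).
\]

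The only point requiring care, and the nearest thing to an obstacle, is justifying the division, namely that $|f|=f(1)\ne0$ and $|g|=g(1)\ne0$. This is exactly where the hypotheses $f,g\ne0$ and the nonnegativity of the coefficients enter: since every $a_i\ge0$ and at least one is positive, $f(1)=\sum_i a_i>0$, and likewise for $g$; equivalently $|f|=0$ holds if and only if $f=0$. Consequently $fg\ne0$ as well, so $\av(fg)$ is itself defined, and the computation above is legitimate.
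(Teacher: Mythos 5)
Your proof is correct and follows essentially the same route as the paper: parts (1) and (2) by additivity of evaluation and differentiation, and part (3) by the Leibniz rule combined with evaluation at $q=1$, which is exactly the computation the paper gives. Your extra care in checking $f(1)\ne 0$ and $g(1)\ne 0$ via nonnegativity of the coefficients is a worthwhile addition, since the paper divides without comment.
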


\begin{proof}
We only confirm (3).
\[
\av(fg)=\ff{(fg)'(1)}{(fg)(1)}
=\ff{f'(1)g(1)+f(1)g'(1)}{f(1)g(1)}
=\ff{f'(1)}{f(1)}+\ff{g'(1)}{g(1)}=\av(f)+\av(g).
\]
\end{proof}
\renewcommand{\A}{\mathbf{W}}

\begin{defn}
Define the \eh{monomialization}
$M:\A\to \mathbf{M}$
as follows: set $M(0)=0$. For $f\ne0$, define 
\[
M(f)=|f|q^{\av(f)}.
\]
\end{defn}

As we can easily see, the monomialization preserves size, total and  average:
\[
|M(f)|=|f|, \q
\|M(f)\|=\|f\|, \q
\av(M(f))=\av(f).
\]

\begin{prop}For each $f, g\in \A$, all of the following are true:
\begin{enumerate}
	\item $M(f+g)=M(M(f)+M(g))$.
\item $M(fg)=M(f)M(g)$.
	\item If $f$ is a monomial, then 
$M(f)=f$. In particular, $M(M(f))=M(f)$. 
\end{enumerate}
\end{prop}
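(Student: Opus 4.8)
The plan is to reduce everything to a single observation: a \emph{nonzero} monomial $aq^{\al}\in\M$ is completely determined by its size and its average, since $|aq^{\al}|=a$ and $\av(aq^{\al})=a\al/a=\al$. Consequently $M(f)$ is, by its very definition, the unique monomial whose size equals $|f|$ and whose average equals $\av(f)$. So to verify that two $M$-expressions agree it will suffice, after disposing of the cases in which some argument is $0$, to check that both are nonzero monomials sharing the same size and the same average. I would record this ``uniqueness by size and average'' remark at the outset and then treat the three items in the order (3), (2), (1), since (3) is the base case that feeds the others.

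For item (3), I write a monomial as $f=aq^{\al}$ with $a\in\nn$, $\al\in\qq_{\ge0}$. If $a=0$ then $f=0$ and $M(f)=0=f$; if $a\ne0$ then $|f|=a$ and $\|f\|=f'(1)=a\al$, whence $\av(f)=\al$ and $M(f)=aq^{\al}=f$. Since $M(f)=|f|q^{\av(f)}$ is always a monomial, applying this identity to $M(f)$ in place of $f$ yields $M(M(f))=M(f)$, the ``in particular'' clause; this is exactly what makes the degenerate cases of (1) collapse.

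For item (2), the cases $f=0$ or $g=0$ give $0$ on both sides. When $f,g\ne0$ the product $fg$ is again nonzero because all coefficients in $\A$ are nonnegative, so no cancellation can occur; then $|fg|=(fg)(1)=f(1)g(1)=|f||g|$, and by part (3) of the preceding proposition $\av(fg)=\av(f)+\av(g)$. Hence $M(fg)=|f||g|\,q^{\av(f)+\av(g)}=(|f|q^{\av(f)})(|g|q^{\av(g)})=M(f)M(g)$, which settles (2).

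Item (1) carries the only genuine content, and I expect it to be the main (if modest) obstacle, precisely because $M$ does \emph{not} distribute over sums: $\av$ is a weighted average rather than an additive functional, so $M(f+g)\ne M(f)+M(g)$ in general and the outer $M$ on the right-hand side is essential. If $f=0$ (or $g=0$) the claim reduces to $M(M(g))=M(g)$ from (3). For $f,g\ne0$ both $f+g$ and $M(f)+M(g)$ are nonzero (again by nonnegativity of coefficients), so by the opening remark it is enough to match sizes and averages. Using additivity of size and total (parts (1),(2) of the preceding proposition) together with the identities $|M(f)|=|f|$ and $\|M(f)\|=\|f\|$, I would obtain $|f+g|=|f|+|g|=|M(f)+M(g)|$ and $\av(f+g)=(\|f\|+\|g\|)/(|f|+|g|)=\av(M(f)+M(g))$, so the two sides of (1) are nonzero monomials of equal size and equal average, hence equal.
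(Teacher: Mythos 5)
Your proof is correct and takes essentially the same approach as the paper's: both arguments verify (1) and (2) by matching sizes and averages of the two sides, using the additivity of size and total, the identity $\av(fg)=\av(f)+\av(g)$ from the preceding proposition, and the fact that $M$ preserves size and total; the paper simply writes this as a direct chain of equalities. Your explicit handling of the zero cases and your written-out proof of (3) (which the paper dismisses as clear) are refinements in rigor rather than a different method.
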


\begin{proof}
\begin{align*}
	M(M(f)+M(g))&=M(|f|q^{\av(f)}+|g|q^{\av(g)})
	\\&=(|f|+|g|)q^{(f'+g')/(f+g)}
	\\&=(|f+g|)q^{\av(f+g)}=M(f+g).
\end{align*}
\[
M(fg)=|f||g|q^{\av(fg)}
=|f||g|q^{\av(f)+\av(g)}
=|f|q^{\av(f)}|g|q^{\av(g)}
=M(f)M(g).
\]
(3) is clear.
\end{proof}

\begin{ex}
Let $w=4231$ in $A_{3}$. 
Figure \ref{f2} shows that 
\[
\P_{4231}(q)
=1+3q+5q^{2}+6q^{3}+4q^{4}+q^{5}
=(1+q)^{2}(1+q+2q^{2}+q^{3}).
\]
Then 
\[
M(\P_{4231}(q))
=
M(1+q)^{2}M(1+q+2q^{2}+q^{3})
=(2q^{1/2})^{2}(5q^{8/5})=20q^{52/20}
\]
so that 
\[
\av \P_{4231}(q)=\ff{52}{20}>\ff{50}{20}=\ff{5}{2}=\ff{\el(4231)}{2}.
\]
Again, due to Carrell-Peterson, [1234, 4231] is irregular.
\end{ex}
\begin{rmk}
These examples above come from the characterization of irregular lower intervals in terms of \eh{pattern avoidance}. 
Say a permutation $w$ of $\{1, 2, \ds, n\}$ \emph{contains 3412 \textup{(}4231\textup{)}} if there exist $i, j, k, l$ such that $i<j<k<l$ and $w(k)<w(l)<w(i)<w(j)$ ($w(l)<w(j)<w(k)<w(i)$); 
say $w$ is \eh{singular} if it contains 3412 or 4231.
Then, the following are equivalent:
\begin{enumerate}
	\item $w$ is singular.
	\item $[e, w]$ is irregular.
\end{enumerate}
See Billey-Lakshmibai \cite{billey} for more details on this topic.
\end{rmk}

\section{Weighted counting of Bruhat paths}\label{s3}

\begin{defn}
A \eh{Bruhat path} is a directed path $\G$ such as 
\[
\G:u=v_{0}\to v_{1}\to  \cd \to v_{k}=w
\]
(Below, a ``path" always means a directed path).
Say $\Gam$ is \eh{short} (maximal) if all its edges are short;
it is \eh{long} otherwise. 
For each Bruhat path $\Gam$ as above, we can consider two kinds of \eh{length}:
$k$ is the \eh{absolute length} of $\G$; $\eluw$ is the \eh{Coxeter length}. We write $a(\G)=k$ and $\el(\G)=\el(u, w)$. 
By $x\os{t}{\to} y$ we mean $x\to y$ and $y=xt$, $t\int $.
\end{defn}

\bd{By a \eh{reflection subgroup} of $W$, we mean an algebraic subgroup of $W$  generated by a subset of $T$. 
}\ed
Every reflection subgroup $W'$ is itself a Coxeter system with the canonical generator
\[
\chi(W')=\{t'\in T\mid T_{L}(t')\cap W'=\{t'\}\}
\]
where $T_{L}(t')=\{t\in T\mid \el(tt')<\el(t')\}$. 
A reflection subgroup $W'$ is \eh{dihedral} if $|\chi(W')|=2$.


\bd{Let $<$ be a total order on $T$.
Say $<$ is a \eh{reflection order} if for all dihedral reflection subgroup $W'$ of $W$ with $\chi(W')=\{r, s\}$ $(r\ne s)$, we have 
\[
r<rsr<\cdots <srs <s \te{ or } s<srs<\cdots <rsr <r.\]
}\ed

\subsection{\a\b-, \c\d-index}
Let $\a, \b$ be noncommutative variables 
and $\Gam:u\to{v_1}{}\to\cdots \to{v_k}=w$ a short path. Define 
\newcommand{\tto}{\too}
\[
\x_i=\begin{cases}
	\a&\te{if $t_i<t_{i+1}$}\\
	\b&\te{if $t_i>t_{i+1}$},\\
\end{cases}
\q \psi(\Gamma)=\x_1\cdots \x_k 
\q\te{   and  } \q 
\Psi_{uw}(\a, \b)=\sum_{\Gam}\psi(\Gam)
\]
where the sum is taken over all short paths $\Gam$ from $u$ to $w$.
The \a\b-polynomial $\Psi_{uw}(\a, \b)$ is called the \a\b\eh{-index} of $[u, w]$.
\bft{$\Psi_{uw}(\a, \b)$
is a polynomial of $\a+\b$ and $\a\b+\b\a$.
That is, there exists a unique noncommutative two-variable polynomial $\Phi_{uw}(\c, \d)$ such that 
\[
\Phi_{uw}(\a+\b, \a\b+\b\a)=\Psi_{uw}(\a, \b).
\]
}\eft
The homogeneous \c\d-polynomial $(\deg \c=1, \deg \d=2)$ $\Phi_{uw}(\c, \d)$ is called the \c\d\eh{-index} of $[u, w]$.


\subsection{complete index}
Let $\Gam:u\to{v_1}{}\to\cdots \to{v_k}=w$ be a (not necessarily short)   path. Similarly, define 
\[
\x_i=\begin{cases}
	\a&\te{if $t_i<t_{i+1}$}\\
	\b&\te{if $t_i>t_{i+1}$},\\
\end{cases}
\q 
\wt{\psi}(\Gamma)=\x_1\cdots \x_k
\q \te{  and  } 
\q
\wt{\Psi}_{uw}(\a, \b)=\sum_{\Gam}\wt{\psi}(\Gam)
\]
where the sum is taken over all paths $\Gam$ from $u$ to $w$.
Again, there exists a unique two-variable polynomial $\wt{\Phi}_{uw}(\c, \d)$ such that
\[
\wt{\Phi}_{uw}(\a+\b, \a\b+\b\a)=\wt{\Psi}_{uw}(\a, \b).
\]
$\wt{\Phi}_{uw}(\c, \d)$ is called 
the \eh{complete} \c\d\eh{-index} of $[u, w]$.

\begin{rmk}
These indices do not depend on the choice of a reflection order.
\end{rmk}

\begin{rmk}
In 1990's, the theory on \a\b-, \c\d-index for polytopes and Eulerian posets has been developed by many researchers such as Bayer, Fine, Klapper and Stanley, for example. 
Later Reading \cite{reading} proved (with Karu's work) that all coefficients of $\c\d$-index for a lower interval $[e, w]$ is nonnegative:\,${\Phi}_{ew}(\c, \d)\ge 0$.
 A complete index for a Bruhat interval is a more recent idea in 2010's: See Billera \cite{billera}, Billera-Brenti \cite{billera-brenti}, Blanco \cite{blanco3} and Karu \cite{karu}.
\end{rmk}


\begin{cj}\hf
\begin{enumerate}
	\item Reading \cite{reading}:${\Phi}_{ew}(\c, \d)\le {\Phi}_{B_{\el(w)}}(\c, \d)$.
	\item Billera-Brenti \cite{billera-brenti} strong conjecture:
$\wt{\Phi}_{ew}(\c, \d)\le {\Phi}_{B_{\el(w)}}(\c, \d)$.
\end{enumerate}
\end{cj}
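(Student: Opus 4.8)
Both parts bound the (complete) $\c\d$-index of a lower interval $[e,w]$, with $n=\ell(w)$, by the $\c\d$-index $\Phi_{B_{n}}$ of the Boolean algebra, which Proposition \ref{p1} already singles out as the largest lower interval by size. Because every short path is a path, one has $\Psi_{ew}(\a,\b)\le\wt\Psi_{ew}(\a,\b)$ coefficientwise, so (2) is the stronger assertion and (1) is its restriction to covering chains; I would therefore aim at (2). The first move is to fix a reflection order and use Dyer's reading of it as an EL-labeling of Bruhat order: then a short path from $e$ to $w$ is a maximal chain, $\Psi_{ew}(\a,\b)$ is the $\a\b$-index (flag $h$-vector generating function) of $[e,w]$, and $\Phi_{ew}(\c,\d)$ is its $\c\d$-index, while the complete versions $\wt\Psi_{ew},\wt\Phi_{ew}$ add the non-covering paths. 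On the right, the maximal chains of $B_{n}$ are the permutations of $\{1,\dots,n\}$, for which short and complete indices coincide and $\Phi_{B_{n}}$ is the classical Boolean $\c\d$-index.

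The key reduction is to turn the coefficientwise $\c\d$-inequality into a comparison of honest nonnegative counts, thereby sidestepping the signs hidden in the change of basis $\c=\a+\b$, $\d=\a\b+\b\a$. I would invoke the nonnegativity of the $\c\d$-coefficients (Reading--Karu for $[e,w]$, Eulerianness for $B_{n}$) together with the chain interpretation of those coefficients: the coefficient of a given $\c\d$-word in the index is the number of maximal chains whose descent word, recorded by the rule $\x_{i}=\a$ on an ascent and $\x_{i}=\b$ on a descent, meets a prescribed peak condition determined by that word. The conjecture then reduces to producing, for each $\c\d$-word simultaneously, an injection from the so-constrained family of Bruhat chains of $[e,w]$ into the corresponding family of permutations counted by $\Phi_{B_{n}}$.

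To build such an injection I would start from a reduced word $s_{1}\cdots s_{n}$ for $w$ and the subword model already used in Proposition \ref{p1}, sending each covering chain to a permutation so that ascents and descents in the reflection order are preserved. The rank-wise bound $\binom{n}{k}$ of Proposition \ref{p1} is the flattest shadow of this injection (evaluating at $\a=\b=1$ recovers the total chain count, bounded by $n!=\Psi_{B_{n}}(1,1)$), which gives confidence that a descent-preserving refinement exists. The quantitative ceilings of Section \ref{s4} --- the Fibonacci- and Jacobsthal-type upper bounds of Theorem \ref{th4} and Corollary \ref{c1} --- would supply the numerical room needed to accommodate the extra non-covering paths when passing from (1) to (2).

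The main obstacle is twofold. First, the injection must respect the descent structure for \emph{all} $\c\d$-words at once, i.e.\ it must be a single descent-word-preserving map rather than an ad hoc bijection per word; producing such a map from the subword model, and verifying it lands inside the ribbon counts $\beta_{n}(S)$ for every descent set $S$, is the combinatorial heart of the argument. Second, and harder, the complete index in (2) involves long paths, which can realise descent patterns that no covering chain of $[e,w]$ produces; the map must absorb these without overshooting the Boolean counts. I expect this second point to be the crux, and would attack it by induction on $\ell(w)$, comparing $[e,w]$ with $[e,ws]$ for a right descent $s$ through the derivation and coproduct structure of the $\c\d$-index, with the recursion underlying Theorem \ref{th4} controlling the contribution of the long edges.
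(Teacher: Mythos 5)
You should first be clear about the status of this statement: it is not a result of the paper but an open conjecture that the paper merely records, attributing part (1) to Reading \cite{reading} and part (2) to Billera--Brenti \cite{billera-brenti}. There is therefore no proof in the paper to compare yours against, and your text must be judged as a research plan. As a proof it has a fatal gap: the central object --- a single descent-word-preserving injection from the (short, respectively all) Bruhat paths of $[e,w]$ into the maximal chains of $B_{\ell(w)}$, valid for every $\c\d$-word simultaneously --- is never constructed; it is only postulated, and the two ``obstacles'' you name at the end are not deferred technicalities but are exactly the open content of the conjecture. Nothing you cite from the paper can supply this object: Proposition \ref{p1} bounds only the number of elements of each rank (i.e., the evaluation at $\a=\b=1$, which forgets descent words entirely), and Theorem \ref{th4} / Corollary \ref{c1} bound shifted $R$-polynomials, which record only the absolute and Coxeter lengths of paths, not their ascent/descent patterns relative to a reflection order.

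There is also a concrete error in your opening reduction. From the inclusion of short paths among all paths you get $\Psi_{ew}(\a,\b)\le\wt{\Psi}_{ew}(\a,\b)$ coefficientwise as $\a\b$-polynomials, and you conclude that (2) is stronger than (1). But coefficientwise inequality of $\a\b$-polynomials does not imply coefficientwise inequality of the corresponding $\c\d$-polynomials: for instance
\[
\c^{2}-\d=(\a+\b)^{2}-(\a\b+\b\a)=\a\a+\b\b
\]
is coefficientwise nonnegative as an $\a\b$-polynomial, while $\c^{2}-\d$ has a negative $\c\d$-coefficient. Hence $\Phi_{ew}\le\wt{\Phi}_{ew}$ in $\c\d$-coefficients is itself a nontrivial claim requiring its own argument, and without it (2) does not formally imply (1). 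This is the very sign subtlety you acknowledge two sentences later, so your reduction undercuts itself; if you pursue this problem, either treat the two parts independently or first establish $\c\d$-nonnegativity of $\wt{\Phi}_{ew}-\Phi_{ew}$.
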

There is one demerit of such indices:
From an \a\b- or a \c\d-monomial 
$\x=\x_{1}\cd \x_{k}$ alone, we cannot recover the Coxeter length of a path. 
Unlike this, we will later on introduce a weight (\eh{Bruhat weight}) which contains some information on both of absolute and Coxeter length of  paths.

\subsection{$R$-polynomials}

Following Bj\"{o}rner-Brenti \cite{bb}, we introduce $R$-polynomials.
\begin{fact}There exists a unique family of polynomials $\{R_{uw}(q)\mid u, w \in W \} \subseteq \mathbf{Z}[q]$ (\emph{$R$-polynomials}) such that
\begin{enumerate}
\item $R_{uw}(q)=0$ \mbox{if $u \not \le w$},
\item $R_{uw}(q)=1$ \mbox{if $u=w$},
\item if $s\in S$ and $\elws<\elw$, then
\begin{align*} R_{uw}(q)=
\begin{cases}
R_{us, ws}(q) &\mbox{ if } \el(us)<\elu,\\
(q-1)R_{u, ws}(q)+qR_{us, ws}(q) &\mbox{ if } \elu<\el(us).
\end{cases}
\end{align*}
\end{enumerate}
\end{fact}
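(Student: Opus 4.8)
The plan is to deduce both existence and uniqueness from the Iwahori--Hecke algebra $\mathcal{H}$ of $W$, which supplies a canonical, choice-free definition of the polynomials $R_{uw}(q)$ and thereby finesses the one genuinely delicate point, namely that the recursion (3) should be independent of the simple reflection $s$ one picks. Recall that $\mathcal{H}$ is the free module over $\mathbf{Z}[q, q^{-1}]$ with basis $\{T_{w}\mid w\in W\}$ subject to $T_{s}T_{w}=T_{sw}$ when $\el(sw)>\elw$ and $T_{s}T_{w}=qT_{sw}+(q-1)T_{w}$ when $\el(sw)<\elw$ (the same rules hold for right multiplication by $T_{s}$). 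The quadratic relation $T_{s}^{2}=(q-1)T_{s}+qT_{e}$ shows that $T_{s}$ is a unit with $T_{s}^{-1}=q^{-1}T_{s}+(q^{-1}-1)T_{e}$; since $T_{w}=T_{s_{1}}\cd T_{s_{k}}$ for any reduced word (well defined by Matsumoto's theorem together with the braid relations in $\mathcal{H}$), every $T_{w}$ is invertible.

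First I would \emph{define} the family by expanding the inverse of a standard basis element:
\[
(T_{w^{-1}})^{-1}=q^{-\elw}\sum_{u}(-1)^{\elw-\elu}R_{uw}(q)\,T_{u},
\]
taking this as the definition of $R_{uw}(q)$. A short induction on $\elw$ shows that $(T_{w^{-1}})^{-1}$ is supported on $\{T_{u}\mid u\le w\}$, which forces $R_{uw}(q)=0$ for $u\not\le w$ and gives property (1); a parallel degree bound in $q$, together with the prefactor $q^{-\elw}$ and the signs, guarantees that each $R_{uw}(q)$ genuinely lies in $\mathbf{Z}[q]$. Reading off the leading term $u=w$ yields the normalization $R_{ww}(q)=1$, which is property (2). (As a sanity check, the case $w=s$ recovers $R_{es}(q)=q-1$.)

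Next I would derive the recursion (3). Fix $s$ with $\elws<\elw$; then $w^{-1}=s(ws)^{-1}$ with $\el((ws)^{-1})<\el(w^{-1})$, so $T_{w^{-1}}=T_{s}T_{(ws)^{-1}}$ and hence
\[
(T_{w^{-1}})^{-1}=(T_{(ws)^{-1}})^{-1}\bigl(q^{-1}T_{s}+(q^{-1}-1)T_{e}\bigr).
\]
Substituting the (inductively known) expansion of $(T_{(ws)^{-1}})^{-1}$, whose coefficients are the $R_{v,ws}(q)$, and multiplying out by $T_{s}$ on the right, one collects the coefficient of each $T_{u}$; the product $T_{v}T_{s}$ splits into the two cases governed by whether $\elus<\elu$ or $\elu<\elus$. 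Comparing the result with the defining expansion of $(T_{w^{-1}})^{-1}$ then produces exactly the two branches of (3). Crucially, the left-hand side $(T_{w^{-1}})^{-1}$ is intrinsic to $\mathcal{H}$, so the identity holds simultaneously for \emph{every} admissible $s$, and no separate consistency check is required.

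Finally, uniqueness I would prove independently by induction on $\elw$, requiring no Hecke algebra at all: for $w=e$ properties (1)--(2) determine all $R_{ue}(q)$, and for $\elw>0$ one chooses any $s$ with $\elws<\elw$ (possible since $w\ne e$) and notes that (3) expresses each $R_{uw}(q)$ through $R$-polynomials indexed by the strictly shorter element $ws$, already fixed by the inductive hypothesis. The main obstacle in any self-contained, Hecke-free treatment would be precisely the well-definedness of (3) under different choices of $s$ (which otherwise demands a commutation/braid-relation argument); routing existence through $\mathcal{H}$ dissolves that difficulty, leaving only routine bookkeeping in the expansion above.
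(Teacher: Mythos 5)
You should first be aware that the paper contains no proof of this statement: it is presented as a background \emph{Fact}, imported from Bj\"{o}rner--Brenti \cite{bb}, so there is no internal argument to measure your proposal against. Judged on its own, your proof is correct and follows the classical route (essentially Kazhdan--Lusztig's original one): defining $R_{uw}(q)$ by the expansion $(T_{w^{-1}})^{-1}=q^{-\ell(w)}\sum_{u}(-1)^{\ell(w)-\ell(u)}R_{uw}(q)\,T_{u}$ makes the family intrinsic; the identity $(T_{w^{-1}})^{-1}=(T_{(ws)^{-1}})^{-1}\bigl(q^{-1}T_{s}+(q^{-1}-1)T_{e}\bigr)$ is valid for every $s$ with $\ell(ws)<\ell(w)$; and comparing coefficients of $T_{u}$ does yield exactly the two branches of (3) --- in the case $\ell(us)<\ell(u)$ the two $v=u$ contributions cancel, leaving $R_{uw}=R_{us,ws}$, while in the case $\ell(us)>\ell(u)$ one gets $(q-1)R_{u,ws}+qR_{us,ws}$ --- and the uniqueness induction on $\ell(w)$ is routine. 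Two remarks. First, your claim that routing through $\mathcal{H}$ ``dissolves'' the consistency difficulty is slightly too generous: the difficulty is relocated, not removed, since the existence of the Hecke algebra itself (an associative algebra, free as a $\mathbf{Z}[q,q^{-1}]$-module on $\{T_{w}\}$, with the stated multiplication rule) is a nontrivial theorem whose standard proof, via a faithful action on that free module, contains precisely the kind of case-by-case verification you are avoiding; your argument is complete only if you cite that theorem explicitly as an input, which is legitimate but should be said. Second, a cosmetic ordering issue: you assert that $R_{uw}(q)$ lies in $\mathbf{Z}[q]$ (not merely $\mathbf{Z}[q,q^{-1}]$) before deriving the recursion, whereas the clean argument establishes support on $\{u\le w\}$, polynomiality, and the recursion in one simultaneous induction on $\ell(w)$. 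What each approach buys: the paper's citation keeps the focus on its new material (shifted $R$-polynomials), while your construction makes the well-definedness of (3) transparent at the cost of assuming the standard Iwahori--Hecke machinery.
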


\begin{ex}
$R$-polynomials involve many negative coefficients.
For example, suppose $u\lew$. We can show that 
\[
R_{uw}(q)=
\begin{cases}
	q-1&\eluw=1,\\
	q^{2}-2q+1&\eluw=2,\\
	q^{3}-2q^{2}+2q-1&\eluw=3, u\to w.
\end{cases}
\]
It is tempting to say that 
coefficients of $R$-polynomials alternate in sign.
However, Boe \cite{boe} found the following counterexample:
\[
\resizebox{0.98\hsize}{!}{
$R_{124356, 564312}(q)=1-5q+11q^2-13q^3+8q^4\ul{\,-\,\st}q^5\ul{\st\,-\,}2q^6-q^7+8q^8-13q^9+11q^{10}-5q^{11}+q^{12}.$}
\]
\end{ex}
We wish to understand $R$-polynomials 
as ones over nonnegative integer coefficients with some combinatorial  interpretation. For this purpose, we have to mention Deodhar's work \cite{deodhar} first: he showed that 
$R_{uw}(q)$ ($u\le w$) is the sum of $q^{m}(q-1)^{n}$ 
with $m, n$ nonnegative integers:
\[
R_{uw}(q)=\sum_{
\substack {\ul{\sig}\in \mathcal{D}\\\pi(\ul{\sig})=u}
} q^{m(\ul{\sig})}(q-1)^{n(\ul{\sig})}
\]
where $\mathcal{D}$ is the set of 
\eh{distinguished subexpressions} $\ul{\sigma}$ of 
some fixed reduced expression $s_{1}\cd s_{\elw}$ for $w$ and $\pi$ is a certain map (which we do not need to discuss here).
Shifting the variable $q$ to $q+1$, it is immediate to obtain 
a polynomial of nonnegative integer coefficients.
On the other hand, there is an interesting property of $R$-polynomials 
as characteristic functions of a vertex and an edge at $q=1$ \cite[Chapter 5, Exercise 35]{bb}:
\begin{align*}
	|R_{uw}(q)|&=R_{uw}(1)=\begin{cases}
	1&\mb{if $(u, w)$ is a vertex (i.e. $u=w$), }\\
	0&\tn{ otherwise.}
\end{cases}
	\\\|R_{uw}(q)\|&=
	R'_{uw}(1)=\begin{cases}
	1&\mb{if $(u, w)$ is a directed edge (i.e. $u\tow$),}\\
	0&\tn{ otherwise.}
\end{cases}
\end{align*}
\renewcommand{\G}{\Gamma}
An easy guess is that 
$R$-polynomials are ``counting something" implicitly in Bruhat graphs since vertices and edges are special cases of 
Bruhat paths of absolute length 0 and 1. 
Thus, it is natural to ask if $R$-polynomials somehow count paths of absolute length $\ge 2$. 
Further, if this is the case, then 
its weighting 
should be something like $q^{m}(q-1)^{n}$.
We will see that this guess is right and make this point more explicit after discussing \eh{$\wt{R}$-polynomials} and \eh{shifted $R$-polynomials}.

\begin{rmk}
Caselli \cite{caselli} also proved certain nonnegativity of $R$-polynomials. We have not found any concrete connection yet, though.
\end{rmk}

\subsection{$\wt{R}$-polynomials}

Next, following \cite{bb}, we introduce another family of polynomials associated to $R$-polynomials. They have nonnegative integer coefficients:

\begin{fact}
There exists a unique family of polynomials $\{\widetilde{R}_{uw}(q)\mid u, w \in W \} \subseteq \mathbf{N}[q]$ (\emph{$\widetilde{R}$-polynomials}) such that
\begin{enumerate}
\item $\wt{R}_{uw}(q)=0$ \mbox{if $u \not \le w$},
\item $\wt{R}_{uw}(q)=1$ \mbox{if $u=w$},
\item if $s\in S$ and $\elws<\elw$, then
\begin{align*} \widetilde{R}_{uw}(q)=
\begin{cases}
\widetilde{R}_{us, ws}(q) &\mbox{ if } \elus<\elu,\\
q\widetilde{R}_{u, ws}(q)+\widetilde{R}_{us, ws}(q) &\mbox{ if } \elu<\elus,
\end{cases}
\end{align*}
\item $\wt{R}_{uw}(q)$ ($u \le w$) is a monic polynomial of degree $\ell(u, w)$,\vspace{.05in}
\item $R_{uw}(q)=q^{\ell(u, w)/2}\wt{R}_{uw}(q^{1/2}-q^{-1/2})$.
\end{enumerate}
\end{fact}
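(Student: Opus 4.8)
The plan is to build the family $\{\wt{R}_{uw}\}$ directly out of the $R$-polynomials, whose existence and uniqueness are already in hand, and to extract properties (1)--(5) from matching properties of $R_{uw}(q)$. Concretely, I would first record two auxiliary facts about $R$-polynomials, each proved by induction on $\elw$ using the defining $R$-recursion: (i) for $u\le w$, $R_{uw}(q)$ is monic of degree $\eluw$; and (ii) $R_{uw}(q)=(-1)^{\eluw}q^{\eluw}R_{uw}(q^{-1})$. Both inductions split along the two cases of the recursion, and the only bookkeeping needed is the length count $\el(us,ws)=\eluw$ in the descent case and $\el(u,ws)=\eluw-1$, $\el(us,ws)=\eluw-2$ in the ascent case; with these the computations close routinely.

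The key step, and the one I expect to be the main obstacle, is converting the symmetry (ii) into the statement that the substitution in (5) genuinely yields a polynomial. Write $p=q^{1/2}$ and $z=p-p^{-1}$, and set $F_{uw}(p)=p^{-\eluw}R_{uw}(p^{2})$. Since $R_{uw}$ is an ordinary polynomial of degree $\eluw$, $F_{uw}$ is a Laurent polynomial in $p$ all of whose exponents have the parity of $\eluw$; and a short computation from (ii) shows $F_{uw}$ is fixed by the involution $p\mapsto -p^{-1}$. Because $z=p-p^{-1}$ is also fixed by this involution, and the powers $\{z^{k}\}_{k\ge0}$ form a $\zz$-basis of the ring of invariant Laurent polynomials (each $z^{k}$ has leading $p$-term $p^{k}$, giving a unitriangular change of basis), every such invariant is a $\zz$-polynomial in $z$. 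Hence $F_{uw}(p)=\wt{R}_{uw}(z)$ for a unique $\wt{R}_{uw}\in\zz[q]$, and this is exactly property (5).

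It then remains to check the listed properties against this definition. Properties (1) and (2) are inherited verbatim from $R_{uw}$. Property (4) follows from (i): under the substitution the top term $q^{\eluw}$ of $R_{uw}$ matches the leading $z^{\eluw}$ of $\wt{R}_{uw}$, so $\wt{R}_{uw}$ is monic of degree $\eluw$. For the recursion (3), I substitute (5) into the $R$-recursion and divide by $q^{\eluw/2}$: in the descent case this gives $\wt{R}_{uw}=\wt{R}_{us,ws}$ at once, while in the ascent case the coefficient $(q-1)q^{-1/2}$ collapses to $q^{1/2}-q^{-1/2}=z$ in front of $\wt{R}_{u,ws}$ and $q\cdot q^{-1}=1$ multiplies $\wt{R}_{us,ws}$, yielding $\wt{R}_{uw}(z)=z\,\wt{R}_{u,ws}(z)+\wt{R}_{us,ws}(z)$, i.e. the stated rule with $q$ in place of $z$. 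Finally, nonnegativity $\wt{R}_{uw}\in\nn[q]$ follows by an immediate induction from this recursion, whose only operations are multiplication by $q$ and addition starting from $\wt{R}_{uu}=1$; and uniqueness follows because (1)--(3) already pin the family down, since for $w\ne e$ some $s\in S$ satisfies $\elws<\elw$ and (3) then expresses $\wt{R}_{uw}$ through strictly shorter data.
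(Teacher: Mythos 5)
Your proof is correct, but there is nothing in the paper to compare it against: the statement is quoted as a Fact from Bj\"{o}rner--Brenti \cite{bb} without proof, the paper adding only the unproved remark that $\wt{R}_{uw}(q)$ is ``indeed a polynomial in $q$.'' Your argument is essentially the reverse of the textbook route. Bj\"{o}rner--Brenti first establish existence and uniqueness of the family $\wt{R}_{uw}$ directly from the recursion (1)--(3), by the same induction on $\el(w)$ that produces the $R$-polynomials (this also gives $\nn[q]$-coefficients for free), and then obtain (5) by checking that $q^{\eluw/2}\wt{R}_{uw}(q^{1/2}-q^{-1/2})$ satisfies the defining recursion of the $R$-polynomials and invoking uniqueness of that family. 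You instead construct $\wt{R}_{uw}$ out of $R_{uw}$: your two preparatory inductions (monicity of degree $\eluw$, and the symmetry $R_{uw}(q)=(-1)^{\eluw}q^{\eluw}R_{uw}(q^{-1})$) are correct and close exactly as you describe, and the invariant-ring step is sound --- the invariants of $\zz[p,p^{-1}]$ under $p\mapsto -p^{-1}$ are spanned over $\zz$ by the elements $p^{j}+(-1)^{j}p^{-j}$ together with $1$, so by the unitriangularity you note they form precisely $\zz[z]$ with $z=p-p^{-1}$, which makes $\wt{R}_{uw}$ well defined and unique, monic of degree $\eluw$, and turns the $R$-recursion into the stated $\wt{R}$-recursion after dividing by $q^{\eluw/2}$. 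The trade-off between the two directions is clear: the textbook order gets nonnegativity trivially but must verify polynomiality of the substitution as the final step, whereas your order makes that polynomiality the structural heart of the argument (thereby proving the very remark the paper leaves unjustified) and recovers nonnegativity at the end by an easy induction on the derived recursion. Both are complete proofs; yours is a legitimate, self-contained alternative to the cited one.
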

We remark that although $q^{1/2}$ and $q^{-1}$ appear 
in the definition above, 
$\wtr_{uw}(q)$ is indeed a polynomial in $q.$
To give a precise description of this family of polynomials, we need 
the following idea:

\begin{defn}
Let $<$ be a reflection order and 
\[
\Gam:u=v_{0}\os{t_{1}}{\to} v_{1}\os{t_{2}}{\to}  \cd \os{t_{k}}{\to} v_{k}=w
\]
a path. 
Say $\G$ is $<$-\eh{increasing} if 
$t_{1}<t_{2}<\cd <t_{k}$. 
We understand that 
any Bruhat path of absolute length 0 or 1 is $<$-increasing for all $<$.
\end{defn}

\begin{fact}[Dyer \cite{dyer2}]
\[
\wt{R}_{uw}(q)=\sum_{\Gam} q^{a(\Gam)}
\]
where the sum is taken all over $<$-increasing paths $\Gam$
 from $u$ to $w$.
Moreover, this sum does not depend on the choice of a reflection order.
\end{fact}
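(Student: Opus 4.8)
The plan is to verify that the right-hand side, viewed as a function of $(u,w)$, obeys the recursion that \emph{uniquely} characterizes the $\wt{R}$-polynomials; since that recursion makes no reference to a reflection order, this single argument delivers both the identity and the asserted independence at once. Concretely, fix a reflection order $<$ and set
\[
G^{<}_{uw}(q)=\sum_{\Gamma}q^{a(\Gamma)},
\]
the sum ranging over $<$-increasing paths from $u$ to $w$. I will prove $G^{<}_{uw}=\wt{R}_{uw}$ for \emph{every} reflection order $<$, by induction on $\elw$. The degenerate cases are immediate: if $u\not\le w$ there are no paths and $G^{<}_{uw}=0$; if $u=w$ the only path is the empty one, of absolute length $0$, so $G^{<}_{uw}=1$. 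These match items (1) and (2) of the defining Fact.

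For the inductive step, choose $s\in S$ with $\elws<\elw$, exactly as in the recursion of item (3). The inductive hypothesis is deliberately strong: for all $x,y$ with $\el(y)<\elw$ and for \emph{all} reflection orders $<'$ we already know $G^{<'}_{xy}=\wt{R}_{xy}$. The main device is right multiplication by $s$, sending a path $\Gamma\colon u=v_{0}\os{t_{1}}{\to}\cdots\os{t_{k}}{\to}v_{k}=w$ to the sequence $us=v_{0}s,\,v_{1}s,\,\ldots,\,v_{k}s=ws$, along which the reflection of each edge transforms by conjugation, $t_{i}\mapsto s t_{i}s$. Two facts make this usable. First, conjugation by $s$ carries dihedral reflection subgroups to dihedral reflection subgroups and preserves their canonical generating pairs, so the relation $<^{s}$ given by $a<^{s}b\iff s a s< s b s$ is again a reflection order; hence ``$<$-increasing'' is transported to ``$<^{s}$-increasing,'' and the inductive hypothesis may be applied with the order $<^{s}$ to the interval $[us,ws]$. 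Second, the lifting (exchange) property controls the Bruhat directions of the shifted edges, telling us exactly when $v_{i-1}s\to v_{i}s$ remains an up-edge and when a length drops by one.

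The heart of the argument is a careful surgery near the distinguished cover $ws\os{s}{\to}w$ (reflection $s$, with $\elws=\elw-1$). I will split the $<$-increasing paths $u\to w$ according to whether their interaction with $s$ forces the insertion or deletion of this special last step. Paths that acquire the extra step $ws\to w$ contribute, after its removal, $<$-increasing paths $u\to ws$ with absolute length lowered by one, producing the term $q\,G^{<}_{u,ws}=q\,\wt{R}_{u,ws}$; the remaining paths biject, via $\Gamma\mapsto\Gamma s$ with directions repaired by the lifting property, with $<^{s}$-increasing paths $us\to ws$, giving $G^{<^{s}}_{us,ws}=\wt{R}_{us,ws}$. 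Whether the first family is present or empty is dictated precisely by the sign of $\elus-\elu$: when $\elu<\elus$ both families survive and we recover $q\,\wt{R}_{u,ws}+\wt{R}_{us,ws}$, whereas when $\elus<\elu$ the first family cannot occur and only $\wt{R}_{us,ws}$ remains. Matching these two outcomes against the two cases of item (3) closes the induction and, because the conclusion holds for arbitrary $<$, simultaneously proves the order-independence.

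The step I expect to be hardest is exactly this surgery: the map $\Gamma\mapsto\Gamma s$ is \emph{not} a clean length-preserving bijection, since multiplication by $s$ can reverse an edge or collapse a step precisely where the path meets the $ws$--$w$ cover. Pinning down, order by order, which $<$-increasing paths gain or lose the reflection $s$ — and checking that the absolute length is tracked correctly, so that the gained or lost step is exactly what supplies the factor $q$ — is the delicate bookkeeping on which the whole identity turns; in the end the two cases of the $\wt{R}$ recursion are nothing but the two possible local configurations of $s$ relative to $u$ at the bottom of the path.
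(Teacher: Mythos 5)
First, a framing remark: the paper does not prove this statement at all; it is imported as a Fact with a citation to Dyer \cite{dyer2}, so your attempt has to be measured against the known proofs (Dyer's Hecke-algebra computation, or Bj\"{o}rner--Brenti, \emph{Combinatorics of Coxeter Groups}, Theorem 5.3.4). Your overall architecture --- verify the defining recursion of $\wt{R}_{uw}(q)$ for the path sum, with the inductive hypothesis quantified over \emph{all} reflection orders --- is the right shape, but the two claims your inductive step rests on are both false, and they are precisely where the content of Dyer's theorem lives. The first is the assertion that $a<^{s}b\iff sas<sbs$ defines a reflection order. Conjugation by $s$ does preserve dihedral reflection subgroups and their canonical generator pairs, but it does \emph{not} preserve the interleaving condition on the dihedral subgroups that contain $s$. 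Concretely, in $S_{3}$ the only reflection orders are $s_{1}<s_{1}s_{2}s_{1}<s_{2}$ and its reverse; conjugating the first by $s=s_{1}$ (which swaps $s_{2}\leftrightarrow s_{1}s_{2}s_{1}$ and fixes $s_{1}$) yields $s_{1}<^{s}s_{2}<^{s}s_{1}s_{2}s_{1}$, which is not a reflection order. The correct statement used in actual proofs is a \emph{rotation}: if $s$ is extremal in $<$, one may conjugate the other reflections and move $s$ to the opposite end; this requires $s$ to be extremal, which you cannot assume for an arbitrary fixed order.

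The second failure is the surgery itself. Take $u=e$, $w=s_{1}s_{2}s_{1}=w_{0}$ in $S_{3}$, $s=s_{1}$ (so $ws=s_{1}s_{2}$), and the order $s_{1}<s_{1}s_{2}s_{1}<s_{2}$. The $<$-increasing paths from $u$ to $w$ are the single edge $e\os{s_{1}s_{2}s_{1}}{\to}w_{0}$ and the path $e\os{s_{1}}{\to}s_{1}\os{s_{1}s_{2}s_{1}}{\to}s_{2}s_{1}\os{s_{2}}{\to}w_{0}$, giving $q+q^{3}=\wt{R}_{uw}(q)$ as it should. But \emph{no} increasing path ends with the edge $ws\to w$, so your family (a) is empty, even though the recursion ($\el(us)>\el(u)$ here) demands the nonzero term $q\,\wt{R}_{u,ws}(q)=q^{3}$; the length-$3$ increasing path, which must supply that term, never visits $ws$ at all. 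Moreover, right multiplication by $s$ sends it to $s_{1}\to e\to s_{2}\to s_{1}s_{2}$, whose \emph{first} step points downward --- refuting your claim that direction flips are localized at the $ws$--$w$ cover. In general, any edge labelled $s$ flips, and for an arbitrary reflection order such an edge can sit anywhere along an increasing path. So the "delicate bookkeeping" you defer is not bookkeeping: it forces a choice of order adapted to $s$ (e.g.\ $s$ maximal, whose existence itself needs proof), the rotation lemma above in place of naive conjugation, and a genuinely different bijection for the residual family --- which together are essentially the whole of the cited theorem.
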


\subsection{Bruhat size and total}


\renewcommand{\g}{\gamma}

\begin{lem}\label{l1}
Let $u<w$,
\label{inc}$a=a(u, w)$ and $\ell=\ell(u, w)$. Then there exist positive integers $\g_\ell\, (=1)$, $\g_{\ell-2}, \dots, \g_a$ such that
\[\wt{R}_{uw}(q)=\g_\ell q^\ell+\g_{\ell-2}q^{\ell-2}+\dots +\g_aq^a.
\]
Consequently, we have
\[R_{uw}(q)=\sum_{i=0}^{\frac{\ell-a}{2}} \g_{a+2i}\,q^{\frac{\ell-a-2i}{2}} (q-1)^{a+2i}.\]
\end{lem}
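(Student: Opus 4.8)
The plan is to first pin down the precise shape of $\wt{R}_{uw}(q)$ and then read off the formula for $R_{uw}(q)$ by a single substitution into relation (5) of the defining Fact for $\wt{R}$-polynomials. I would record four features of $\wt{R}_{uw}(q)$. Its coefficients are nonnegative integers, since by Dyer's Fact $\wt{R}_{uw}(q)=\sum_{\G}q^{a(\G)}$ counts $<$-increasing paths. It is monic of degree $\ell=\ell(u,w)$ by item (4) of the Fact, which already gives $\g_\ell=1$. Every exponent that occurs is $\equiv\ell\pmod 2$: this is immediate because each Bruhat edge changes the length function by an odd amount, so a directed path from $u$ to $w$ of absolute length $k$ satisfies $\ell(w)-\ell(u)\equiv k\pmod 2$, whence $k\equiv\ell\pmod 2$. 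The two remaining points — that the support has no internal gaps, and that its minimum equals the graph distance $a=a(u,w)$ — carry the real content.

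For the no-gaps property (equivalently, $\g_{a},\g_{a+2},\dots,\g_{\ell}$ are all strictly positive), I would induct on $\ell(w)$ using recursion (3). Fix $s\in S$ with $\ell(ws)<\ell(w)$. If $\ell(us)<\ell(u)$, then $\wt{R}_{uw}(q)=\wt{R}_{us,ws}(q)$ and the conclusion is inherited verbatim from the smaller target $ws$. If $\ell(u)<\ell(us)$, then
\[
\wt{R}_{uw}(q)=q\,\wt{R}_{u,ws}(q)+\wt{R}_{us,ws}(q),
\]
where, writing $m(\cdot,\cdot)$ for the minimal exponent, the inductive hypothesis makes $q\,\wt{R}_{u,ws}(q)$ positive on the contiguous step-$2$ range from $1+m(u,ws)$ up to $\ell$, and $\wt{R}_{us,ws}(q)$ positive on a contiguous range up to $\ell-2$. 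Both summands have nonnegative coefficients, so no cancellation occurs; the two ranges lie in the same residue class mod $2$, the first reaches $\ell$ and the second reaches $\ell-2$, and these differ by a single step, so the union is again contiguous and ends at $\ell$. This establishes the shape, with $m(u,w)=m(us,ws)$ in the first case and $m(u,w)=\min\!\big(m(u,ws)+1,\;m(us,ws)\big)$ in the second.

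It remains to identify $m(u,w)$ with $a(u,w)$. The inequality $m(u,w)\ge a(u,w)$ is automatic, since every $<$-increasing path is a directed path. The reverse inequality — that some geodesic in the Bruhat graph can be chosen $<$-increasing — is the main obstacle. I would attack it by the exchange technique native to reflection orders: in a shortest path containing a descent $v_{i-1}\os{t_i}{\to}v_i\os{t_{i+1}}{\to}v_{i+1}$ with $t_i>t_{i+1}$, the reflections $t_i,t_{i+1}$ generate a dihedral reflection subgroup on which the restriction of $<$ is again a reflection order, and the length-$2$ interval $[v_{i-1},v_{i+1}]$ is a diamond; replacing the sub-path by the other maximal chain of that diamond swaps the local pair to an ascent while preserving absolute length. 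Using the total number of $<$-inversions of the reflection sequence as a strict monovariant, iteration straightens any geodesic into a $<$-increasing path of the same length, giving $m(u,w)\le a(u,w)$. (The cleanest exposition may simply cite Dyer's analysis of increasing paths and reflection orders, of which this is a special case.) I expect this straightening — and verifying that the monovariant genuinely decreases under each diamond move — to be the delicate step.

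Finally, the ``Consequently'' part is pure algebra. Writing $\wt{R}_{uw}(q)=\sum_{i=0}^{(\ell-a)/2}\g_{a+2i}\,q^{a+2i}$ and using relation (5) together with the identity $(q^{1/2}-q^{-1/2})^{m}=q^{-m/2}(q-1)^{m}$, I substitute to get
\[
R_{uw}(q)=q^{\ell/2}\sum_{i=0}^{(\ell-a)/2}\g_{a+2i}\,q^{-(a+2i)/2}(q-1)^{a+2i}=\sum_{i=0}^{(\ell-a)/2}\g_{a+2i}\,q^{\frac{\ell-a-2i}{2}}(q-1)^{a+2i},
\]
which is exactly the asserted expression; note this also recovers $\deg R_{uw}=\ell$ and the constant term $(-1)^{\ell}$ as consistency checks.
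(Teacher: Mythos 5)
Your final paragraph is exactly the paper's proof: the paper disposes of the first statement in one sentence (``a well-known property of $\widetilde{R}$-polynomials'') and then performs the same substitution $q^{1/2}-q^{-1/2}=q^{-1/2}(q-1)$ into relation (5). So the ``Consequently'' part is correct and identical in approach to the paper. Of the extra work you do to establish the first statement, most of it is sound: nonnegativity via Dyer's path-counting formula, monicity in degree $\ell$, the parity argument (every reflection has odd length, so every edge changes length by an odd amount), and the no-gap induction on $\ell(w)$ via recursion (3) all hold up --- in the ascent case one should add that the lifting property guarantees $u\le ws$, so the term $q\,\widetilde{R}_{u,ws}(q)$ is genuinely present and its support reaches degree $\ell$, which is what makes the union of the two supports contiguous. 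The inequality $m(u,w)\ge a(u,w)$ is also fine.

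The genuine gap is in the straightening argument for $m(u,w)\le a(u,w)$. You claim that $[v_{i-1},v_{i+1}]$ is a length-$2$ interval and hence a diamond. That is false in general: edges of the Bruhat graph are not cover relations, so $\ell(v_{i-1},v_{i+1})$ can be much larger than $2$ (take $v_{i-1}=e$ and $t_i$ a reflection of length $3$; then already $\ell(v_{i-1},v_i)=3$), and the Eulerian diamond property says nothing about such a configuration. The correct local move lives not in the Bruhat interval but in the coset $v_{i-1}W'$, $W'=\langle t_i,t_{i+1}\rangle$: by Dyer's reflection-subgroup theory the Bruhat graph induced on this coset is isomorphic to the Bruhat graph of the dihedral group $W'$, and inside that coset there is a second path of absolute length $2$ from $v_{i-1}$ to $v_{i+1}$ whose label pair is increasing --- that is Dyer's exchange lemma, not the diamond property of Eulerian posets. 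Your termination argument is also incomplete as stated: replacing $(t_i,t_{i+1})$ by an increasing pair $(r,r')$ can create new $<$-inversions against labels elsewhere in the sequence, so the total inversion count is not an obvious strict monovariant; Dyer's argument instead takes the lexicographically least label sequence among geodesics and shows it must be increasing. Since you yourself offer the fallback of citing Dyer for this step, the repair is simply to do so --- which is in effect what the paper does by declaring the first statement well known --- and with that citation your proof is complete.
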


\bpf{The first statement is a well-known 
property of $\wt{R}$-polynomials. 
As a result, 
\begin{align*}
R_{uw}(q)&=q^{\frac{\ell}{2}}\wt{R}_{uw}(q^{\frac{1}{2}}-q^{-\frac{1}{2}})\\
&=q^{\frac{\ell}{2}}\sum_{i=0}^{\frac{\ell-a}{2}}\g_{a+2i}(q^{\frac{1}{2}}-q^{-\frac{1}{2}})^{a+2i}\\
&=q^{\frac{\ell}{2}}\sum_{i=0}^{\frac{\ell-a}{2}}\g_{a+2i}(q^{-\frac{1}{2}}(q-1))^{a+2i}\\
&=\sum_{i=0}^{\frac{\ell-a}{2}}\g_{a+2i}\,q^{\frac{\ell-a-2i}{2}}(q-1)^{a+2i}.
\end{align*}
}\epf
Hence shifting the variable by one, 
\[
R_{uw}(q+1)=\sum_{i=0}^{\frac{\ell-a}{2}} \g_{a+2i}\,(q+1)^{\frac{\ell-a-2i}{2}} q^{a+2i}
=\sum_{\Gam}(q+1)^{\frac{\ell-a-2i}{2}} q^{a+2i}
\]
is a polynomial of nonnegative integer coefficients. 
It turns out that 
\[
\G\mto (q+1)^{(\ell(\G)-a(\G))/2}q^{a(\G)}\]
is an appropriate choice for a weight of $\G$.
\begin{defn}
Let 
\[
\Gam:u=v_{0}\to v_{1}\to  \cd \to v_{k}=w, \q k=a(\G)
\]
be a Bruhat path.
Define the \eh{Bruhat weight} of $\G$: 
\[
\rho(\G)=(q+1)^{(\el(\G)-a(\G))/2}q^{a(\G)}.
\]
In particular, 
$\rho(\G)$ equals a monomial $q^{k}$ if $\G$ is a short path of length $k$.
\end{defn}

\subsection{Bruhat weight for edges}

Let us introduce a weight also for edges.
The \eh{height} of an edge $u\to v$ is $(\eluv+1)/2$.
In particular, $u\to v$ is \eh{short} if and only if its height is 1; otherwise it is \eh{long}.
 Write $h(u\to v)=\ff{\el(u, v)+1}{2}$.

\begin{defn}
The \eh{Bruhat weight} of an edge of height $h$ is $(q+1)^{h-1}q.$ 
For convenience, we use this symbol:
\[
\to_{h}=(q+1)^{h-1}q.
\]
\end{defn}
This weighting is ``multiplicative" in the following sense:
If $\Gam$ is $v_{0}\to v_{1}\to v_{2}\to \cd \to v_{k-1}\to v_{k}$, 
then 
\[
\rh(\Gam)=
\to_{h(v_{0}\to v_{1})}
\to_{h(v_{1}\to v_{2})}
\cd
\to_{h(v_{k-1}\to v_{k})}.
\]
As we see, $\to_{h}$ is a polynomial of degree $h$.
For example, 
\[
\to_{1}=q,\q
\to_{2}=(q+1)q,\q
\to_{3}=(q+1)^{2}q,
\]
\[
|\to_{1}|=1,\q
|\to_{2}|=2,\q
|\to_{3}|=4 
\te{  and  }
\]
\[
M(\to_{h})=
M(1+q)^{h-1}M(q)=
(2q^{1/2})^{h-1}(q)=
2^{h-1}q^{(h+1)/2}.
\]
\[
\text{with }
|\to_{h}|=2^{h-1}, 
\|\tom\|=2^{h-2}(h+1), 
\av(\to_{h})=\ff{h+1}{2}.
\]


\begin{rmk}
Paths with the same absolute and Coxeter length 
have an identical weight:
For example, 
$\to_{1}\to_{3}=(q+1)^{2}q^{2}=\to_{2}\to_{2}.$
\end{rmk}

\subsection{shifted $R$-polynomials}

\begin{defn}
\eh{The shifted $R$-polynomial} (\eh{$\ora{R}$-polynomial}) for $(u, w)$ is 
\[
\ora{R}_{uw}(q)={R}_{uw}(q+1).
\]
\end{defn}
In particular, 
for $u\not\le w$, 
$\ora{R}_{uw}(q)=0$ 
and 
for $u\le w$, it is monic of degree $\eluw$.
%

\begin{defn}
The \eh{Bruhat size} of $[u, w]$ is $|\olr_{uw}(q)|(= R_{uw}(2))$.
The \eh{Bruhat total} of $[u, w]$ is 
$\|\olr_{uw}(q)\|(=R'_{uw}(2)).$
For convenience, we sometimes write 
\[
|[u, w]|= |\ora{R}_{uw}(q)|, \q \|[u, w]\|= \|\ora{R}_{uw}(q)\|.
\]
\end{defn}

\begin{ex}[Table \ref{t1}]
\begin{align*}
	\ova{R}_{1234, 3421}(q)&=q^{5}+2(q+1)q^{3},
	\\|[1234, 3421]|&=\ova{R}_{1234, 3421}(1)=1^{5}+2\cdot 2\cdot 1^{3}=5,
	\\\|[1234, 3421]\|&=\ova{R}'_{1234, 3421}(1)
={5q^{4}+8q^{3}+6q^{2}\,}
\bigr|_{q=1}
=19.
\end{align*}
\end{ex}

\renewcommand{\wtr}{\wt{R}}

Let us simply say $\ora{R}_{ev}(q)$ is the $\olr$\eh{-polynomial} of $v$. The \eh{Bruhat size} of $v$ is $|v|=|\ora{R}_{ev}(q)|$ and 
the \eh{Bruhat total} of $v$ is 
$\|v\|=\|\ora{R}_{ev}'(q)\|$.
For example, 
$|1234|=1, |3412|=3, |4231|=9, |4321|=11$ (Table \ref{t1}).


\begin{thm}\label{th1}\hf
\begin{enumerate}
	\item $u\le v\then |u|\le |v|$.
	\item $|v|$ is odd.
\end{enumerate}
\end{thm}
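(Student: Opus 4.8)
The plan is to work entirely with the path expansion that the excerpt has just established. Since $|v|=|\ova{R}_{ev}(q)|=\ova{R}_{ev}(1)=R_{ev}(2)$, and since combining Dyer's theorem with Lemma \ref{l1} gives $\ova{R}_{ev}(q)=\sum_{\Gam}\rho(\Gam)$ summed over the $<$-increasing paths $\Gam\colon e\to\cdots\to v$ (for any fixed reflection order $<$), evaluating at $q=1$ yields
\[
|v|=\sum_{\Gam}2^{(\el(v)-a(\Gam))/2},
\]
the sum ranging over those increasing paths. Both parts are read off from this formula together with the $q=2$ specialization of the $R$-polynomial recursion.

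For (2) I would isolate the unique maximal short increasing path. Every $<$-increasing path from $e$ to $v$ has Coxeter length $\el(v)$, so its absolute length satisfies $a(\Gam)\le\el(v)$ with $\el(v)-a(\Gam)$ even; the extreme case $a(\Gam)=\el(v)$ forces every edge to be short, i.e. $\Gam$ is a maximal chain, and Lemma \ref{l1} ($\g_{\el}=1$) guarantees exactly one such increasing chain. That path contributes $2^{0}=1$, while every other increasing path has $a(\Gam)\le\el(v)-2$ and hence contributes the even number $2^{(\el(v)-a(\Gam))/2}$. Thus $|v|$ is $1$ plus an even integer, i.e. odd (with $|e|=1$ handled directly).

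For (1) the starting point is the recursion at $q=2$: if $s\in S$ with $\el(ws)<\el(w)$ then $R_{ew}(2)=R_{e,ws}(2)+2R_{s,ws}(2)$, so $|w|=|ws|+2R_{s,ws}(2)\ge|ws|$. This gives monotonicity along the special covers $ws\lessdot w$, but to reach all $u\le v$ I would prove the stronger two-endpoint statement: for $x\le u\le v$ one has $R_{xu}(2)\le R_{xv}(2)$ (part (1) being $x=e$). I would induct on the interval length $\el(x,v)$: choose $s\in D_R(v)$, apply the $q=2$ recursion to $R_{xv}(2)$, and use the Bruhat lifting property to locate $u$ relative to $vs$ and $v$ according to whether $s$ lies in $D_R(x)$ and in $D_R(u)$. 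In the generic case $s\notin D_R(u)$ the lifting property gives $u\le vs$, and the inductive hypothesis on $[x,vs]$ together with $R_{xv}(2)\ge R_{x,vs}(2)$ closes the argument; the case $s\in D_R(u)$ is handled by dividing out $s$ from both endpoints. The lower-endpoint comparisons that arise (e.g. $R_{x,ws}(2)$ versus $R_{xs,ws}(2)$) I would convert into upper-endpoint comparisons via the duality $R_{xw}(2)=R_{\wo w,\wo x}(2)$, which reverses Bruhat order and preserves $R$-polynomials.

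The hard part will be the bookkeeping of this induction rather than any individual step. Upper-endpoint and lower-endpoint monotonicity feed into one another through the lifting property and the $\wo$-duality, and the reductions in which $s$ is simultaneously a descent of both endpoints (so both endpoints lose $s$) preserve the interval length $\el(x,v)$ exactly; a naive induction on interval length is therefore circular. The crux is to pin down a well-founded measure that strictly decreases in every branch --- for instance a lexicographic refinement of interval length by $\el(x)$, or, equivalently, isolating the two single-generator monotonicity lemmas and ordering their mutual reductions so that each call strictly lowers $\el(w)$ or $\el(x)$. Once that measure is fixed, every individual case reduces to a short computation with the recursion and the lifting property.
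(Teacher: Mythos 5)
Your part (2) is correct and is, in substance, the paper's own proof: the paper evaluates $\ova{R}_{ev}(q)=\sum_{i}\g_{a+2i}(q+1)^{(\el-a-2i)/2}q^{a+2i}$ at $q=1$ and uses monicity ($\g_{\el}=1$, Lemma \ref{l1}) to isolate the single odd contribution; your ``the unique increasing maximal chain contributes $1$ and every other increasing path contributes an even number'' is the same computation read through Dyer's path interpretation.

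Part (1) is where the genuine gap lies, and it is exactly the one you flag but never close. The paper's argument is short and non-inductive: Lemma \ref{l2} rewrites $|u|=2^{\el(u)/2}\,\wtr_{eu}(2^{-1/2})$ and applies Blanco's product inequality $\wtr_{eu}(q)\wtr_{uv}(q)\le\wtr_{ev}(q)$ together with $q^{\eluv}\le\wtr_{uv}(q)$ at $q=2^{-1/2}$; the same three lines with a general lower endpoint already yield your stronger two-endpoint statement $R_{xu}(2)\le R_{xv}(2)$, so the inductive route buys nothing unless it is actually completed --- and as proposed it cannot be. The obstruction sits in the mixed-descent branch you label non-generic: if $s\in D_R(v)\cap D_R(x)$ but $s\notin D_R(u)$, the recursion gives $R_{xv}(2)=R_{xs,vs}(2)$, and you are forced into a lower-endpoint comparison such as $R_{x,vs}(2)\le R_{xs,vs}(2)$ inside $[xs,vs]$, an interval of the \emph{same} length $\el(x,v)$, so induction on interval length is stuck there. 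Converting by the duality $R_{xw}(q)=R_{\wo w,\wo x}(q)$ preserves interval length and produces an upper-endpoint instance, but its upper element $\wo xs$ has length $\el(\wo)-\el(x)+1$, which bears no controlled relation to $\el(v)$; consequently neither of the measures you float (interval length refined by $\el(x)$, or calls that ``strictly lower $\el(w)$ or $\el(x)$'') visibly decreases across the duality step. Worse, when the same mixed pattern occurs on the lower-endpoint side, the recursion puts a two-term sum $R_{x,vs}(2)+2R_{xs,vs}(2)$ on the side that must be \emph{smaller} and a single term on the side that must be larger, so the inductive hypothesis cannot be applied summand by summand at all; closing that case needs a genuinely new idea, not bookkeeping. (You also silently use $R_{\cdot\,\cdot}(2)\ge 0$ in the branches where $xs\not\le us$; this is true by Lemma \ref{l1}, but it must be invoked.) As submitted, part (1) is a proof plan with an acknowledged circularity at its core, whereas the statement follows in a few lines from Lemma \ref{l2} as in the paper.
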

\begin{lem}\label{l2}
Let $f, g, h\in \nn[q]$.
\begin{enumerate}
	\item $|u|=2^{\elu/2}\wt{R}_{eu}(2^{-1/2})$.
	\item $g\le h\then fg\le fh\then (fg)(2^{-1/2})\le (fh)(2^{-1/2})$.
	\item $\wt{R}_{eu}(q)q^{\eluv}\le \wtr_{ev}(q)$ if $u\le v$.
In particular, $\wt{R}_{eu}(2^{-1/2})\le 2^{\eluv/2}\wtr_{ev}(2^{-1/2})$.
\end{enumerate}
\end{lem}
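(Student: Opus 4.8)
The first two parts are routine and feed the third. For (1), I would start from the conversion formula $R_{eu}(q)=q^{\elu/2}\wt{R}_{eu}(q^{1/2}-q^{-1/2})$ and simply evaluate at $q=2$. The point is the numerical coincidence $2^{1/2}-2^{-1/2}=2^{-1/2}$, so that $R_{eu}(2)=2^{\elu/2}\wt{R}_{eu}(2^{-1/2})$; since $|u|=R_{eu}(2)$ by the definition of Bruhat size, this is exactly (1). For (2), note that $g\le h$ means $h-g\in\nn[q]$; multiplying by $f\in\nn[q]$ keeps nonnegative coefficients, so $f(h-g)=fh-fg\in\nn[q]$, i.e. $fg\le fh$; and any polynomial with nonnegative coefficients takes a nonnegative value at the positive real $2^{-1/2}$, whence $(fh-fg)(2^{-1/2})\ge 0$. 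Both implications follow.

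The substance is (3), whose first assertion is the coefficientwise monotonicity $q^{\eluv}\wt{R}_{eu}(q)\le\wt{R}_{ev}(q)$ for $u\le v$. I would first reduce to a single covering step: choosing a saturated chain $u=z_0\lessdot z_1\lessdot\cdots\lessdot z_m=v$ (with $m=\eluv$), one multiplies the one-step inequalities $q\wt{R}_{ez_i}(q)\le\wt{R}_{ez_{i+1}}(q)$ by suitable powers of $q$ using part (2), so that the telescoped chain $q^m\wt{R}_{eu}\le q^{m-1}\wt{R}_{ez_1}\le\cdots\le\wt{R}_{ev}$ yields the claim by transitivity. Thus everything comes down to the covering case.

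To make the induction close, I would prove a two-sided covering monotonicity simultaneously: for all $z$, a Top statement ($z\le u\lessdot v\Rightarrow q\wt{R}_{zu}\le\wt{R}_{zv}$) and a Bottom statement ($a\lessdot b\le c\Rightarrow q\wt{R}_{bc}\le\wt{R}_{ac}$), by induction on the length of the top element. Fixing $s\in D_R(v)$ and applying the defining recurrence of the $\wt{R}$-polynomials to both $\wt{R}_{zv}$ and $\wt{R}_{zu}$, the lifting property of Bruhat order is the key geometric input: from $u\lessdot v$ and $s\in D_R(v)$ one gets either $us>u$, in which case $u\le vs$ together with $\elu=\ell(vs)$ forces $u=vs$, or $us<u$, in which case the right-multiplication property gives $us\le vs$ and hence $us\lessdot vs$. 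In the first situation the recurrence exhibits $\wt{R}_{zv}$ as $q\wt{R}_{zu}$ plus a manifestly nonnegative remainder (or, when $zs<z$, reduces to the Bottom statement at the strictly shorter top $vs$); in the second, the difference $\wt{R}_{zv}-q\wt{R}_{zu}$ splits into two terms each controlled by the inductive hypothesis applied to $vs$, using also $zs\le us$ (again the right-multiplication property) so that the relevant $\wt{R}_{zs,\,us}$ terms fit the covering hypothesis. Since every subcase refers only to the strictly shorter top $vs$, the induction terminates.

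Finally, the ``in particular'' clause is immediate: evaluating the nonnegative-coefficient polynomial $\wt{R}_{ev}-q^{\eluv}\wt{R}_{eu}$ at $q=2^{-1/2}>0$ gives $\wt{R}_{ev}(2^{-1/2})\ge 2^{-\eluv/2}\wt{R}_{eu}(2^{-1/2})$, which rearranges to the stated bound. The main obstacle is the covering induction in (3): the recurrence forces one to track the bottom and the top subscript at once, and without the two-sided formulation (together with the lifting and right-multiplication properties that pin down exactly which of $u=vs$ or $us\lessdot vs$ occurs) the cases do not close. Parts (1), (2), and the evaluation step are bookkeeping by comparison.
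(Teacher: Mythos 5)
Your parts (1) and (2) are correct and coincide with the paper's own argument (evaluate the change-of-variables formula at $q=2$, using $2^{1/2}-2^{-1/2}=2^{-1/2}$; nonnegativity of coefficients at the positive point $2^{-1/2}$). The divergence is in (3). The paper does not reprove the monotonicity at all: it quotes Blanco's theorem that $u\le x\le v$ implies $\wt{R}_{ux}(q)\wt{R}_{xv}(q)\le \wt{R}_{uv}(q)$, specializes to $\wt{R}_{eu}\wt{R}_{uv}\le \wt{R}_{ev}$, and finishes with $q^{\eluv}\le \wt{R}_{uv}(q)$ and part (2); alternatively the inequality is exactly Brenti's result recalled in Section \ref{s4}. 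You instead try to reprove it from the defining recurrence, and there is a genuine gap in that plan.

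The gap is the Bottom statement. Your reduction to coverings and your case analysis of the Top statement ($z\le u\lessdot v\Rightarrow q\wt{R}_{zu}\le\wt{R}_{zv}$) are sound, but Case A of that analysis (when $v=us$ and $zs<z$) calls the Bottom statement ($a\lessdot b\le c\Rightarrow q\wt{R}_{bc}\le\wt{R}_{ac}$), and you never prove it --- you only assert that the two-sided formulation makes the induction close. It does not close symmetrically: running the same analysis on the Bottom statement with $s\in D_R(c)$, in the case $as<a$, $bs>b$ the recurrence collapses the right side to the single term $\wt{R}_{ac}=\wt{R}_{as,cs}$, while the left side expands as $q\wt{R}_{bc}=q^{2}\wt{R}_{b,cs}+q\wt{R}_{bs,cs}$; the available inductive hypotheses (applied to $a\lessdot b\le cs$, $as\lessdot a\le cs$, $b\lessdot bs\le cs$) each bound a single term by the \emph{whole} of $\wt{R}_{as,cs}$, which is useless for the sum. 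This case occurs and is tight: take $a=2134$, $b=2314$, $c=4321$, $s=s_{1}$ in $A_{3}$; then $cs=3421$, $as=e$, $bs=3214$, and (by Table \ref{t1}) the required inequality reads $q^{5}+q^{3}\le q^{5}+2q^{3}$ --- true, but with slack a single monomial, so no argument that bounds the two terms separately by the right-hand side can produce it. This is not a cosmetic omission: bottom-monotonicity of $\wt{R}$-polynomials is precisely the statement which, in the results quoted in Section \ref{s4}, needs either finiteness of $W$ together with the flip $R_{uw}(q)=R_{ww_{0},uw_{0}}(q)$ (which changes the top element and so is incompatible with your induction on the length of the top) or a weak-order hypothesis. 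To repair your route you would have to prove the Bottom statement by a genuinely different device --- e.g.\ the finite-$W$ bottom recurrence obtained by flipping, or avoid Bottom entirely by also using the left recurrence via $\wt{R}_{xy}(q)=\wt{R}_{x^{-1}y^{-1}}(q)$ and showing that being stuck on both sides forces $\ell(v)=1$ (a root-system argument you do not mention). As written, your proof of (3) is incomplete, while the paper's citation of Blanco's inequality sidesteps the whole difficulty.
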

\begin{proof}\hf
\begin{enumerate}
	\item Recall that 
\[
	\ora{R}_{eu}(q)=R_{eu}(q+1)=(q+1)^{\elu/2}\wt{R}_{eu}((q+1)^{1/2}-(q+1)^{-1/2}).
\]	
Now	let $q=1$. 
\[
|u|=\ora{R}_{eu}(1)=2^{\elu/2}\wt{R}_{eu}(2^{-1/2}).
\]

	\item Suppose $g\le h$. 
	Then $h-g=\sum_{i=0}^{d} a_{i}q^{i}$ for some nonnegative integers $(a_{i})$. Obviously, $(fh-fg)(q)=f(q)\k{\dsum_{i=0}^{d} a_{i}q^{i}}$ 
	and $q={2^{-1/2}}>0$ yields a nonnegative real number.
	\item Blanco \cite[Theorem 4]{blanco4} proved that 
$u\le x\le v$ $\then$ $\wtr_{ux}(q)\wtr_{xv}(q)\le \wtr_{uv}(q)$.
Let $u\mto e, x\mto u, v\mto v$ so that  
$\wtr_{eu}(q)\wtr_{uv}(q)\le \wtr_{ev}(q)$.
Together with (2) and $q^{\eluv}\le \wtr_{uv}(q)$, we have 
\[
\wtr_{eu}(q)q^{\eluv}\le \wtr_{eu}(q)\wtr_{uv}(q)\le \wtr_{ev}(q).
\]
Finally, set $q=2^{-1/2}$.
\end{enumerate}
\end{proof}

\begin{proof}[Proof of Theorem \ref{th1}]
\hf
\begin{enumerate}
	\item Suppose $u\le v$.
	With the Lemma above, we have 
	\begin{align*}
	|u|&=\ora{R}_{eu}(1)
		= 2^{\elu/2}\wt{R}_{eu}(2^{-1/2})
		\\&\le 2^{\elu/2}\k{2^{\eluv/2}\wt{R}_{ev}(2^{-1/2})}
		=2^{\elv/2}\wt{R}_{ev}(2^{-1/2})=|v|.
	\end{align*}
	\item Let $a=a(e, v)$, $\el=\el(v)$ and $\g_{j}=[q^{j}](\wtr_{ev}(q))$. 
	Because $\g_{\el}=1$, we see that 
\[
|v|=
\ora{R}_{ev}(1)=
\sum_{i=0}^{\frac{\ell-a}{2}} \g_{a+2i}\,2^{\frac{\ell-a-2i}{2}} 1^{a+2i}
=
1+
\sum_{i=0}^{\frac{\ell-a}{2}-1} \g_{a+2i}\,2^{\frac{\ell-a-2i}{2}}
\]
is odd.
\end{enumerate}
\end{proof}

{\renewcommand{\arraystretch}{1.5}
\begin{table}
\caption{$R$-polynomials in $A_{3}$; 
see Billey-Lakshmibai \cite[p.73]{billey}
}
\label{t1}
\begin{center}
	\begin{tabular}{l|l|cccccc}\h
	\mb{}\hf{$v$}\hf&\mb{}\hf{$R_{ev}(q)$}\hf&$|v|$	\\\h
	1234&	$1$&1	\\\h
	1243, 1324, 2134&$q-1$	&1	\\\h
	1342, 1423, 2143, 2314, 3124&$(q-1)^{2}$	&1	\\\h
	1432, 3214&$(q-1)^{3}+q(q-1)$	&3	\\\h
	2341, 2413, 3142, 4123&$(q-1)^{3}$	&1	\\\h
	2431, 3241, 3412, 4132, 4213 &$(q-1)^{4}+q(q-1)^{2}$	&3	\\\h
	4231&$(q-1)^{5}+2q(q-1)^{3}+q^{2}(q-1)$	&9	\\\h
	3421, 4312&$(q-1)^{5}+2q(q-1)^{3}$	&5	\\\h
	4321&$(q-1)^{6}+3q(q-1)^{4}+q^{2}(q-1)^{2}$	&11	\\\h
\end{tabular}
\end{center}
\end{table}}

\subsection{sum of $R$-polynomials}

\begin{fact}
Bruhat order with a reflection order is \eh{Edge-Labeling shellable} (\eh{EL-shellable}) (Dyer \cite{dyer2}):
let $<$ be an arbitrary reflection order.
\begin{enumerate}
	\item For each $[u, w]$, there is a unique $<$-increasing short path from $u$ to $w$, say 
	\[
\G:u=v_{0} \os{t_{1}}{\to} v_{1}\os{t_{2}}{\to} \cd \os{t_{\eluw}}{\to} v_{\eluw}=w.\]
\item Moreover, $(t_{1}, t_{2}, \ds, t_{\eluw})\in T^{\eluw}$ is lexicographically first among all short paths 
from $u$ to $w$.
\end{enumerate}
Consequently, for each $v$, there exists a unique $<$-increasing short path $\G:e\to \ds \to v$ such that 
$\rho(\G)=q^{\elv}$ (corresponding to the Poincar\'{e} weight for $v$).
\end{fact}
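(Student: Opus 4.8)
The plan is to establish the three assertions in sequence, leaning on the machinery already in place. Assertion~(1) will follow almost formally from Dyer's expansion of $\wt{R}_{uw}(q)$ together with its monicity; the ``lexicographically first'' assertion~(2) is the genuinely hard half of EL-shellability, and is where the reflection-order hypothesis must do real work; the concluding assertion~(3) is then a one-line consequence of~(1) and the definition of the Bruhat weight.

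For~(1), I would first note that a path $\G:u=v_{0}\to v_{1}\to\cd\to v_{k}=w$ satisfies $\sum_{i=1}^{k}\el(v_{i-1},v_{i})=\eluw$, while every edge contributes at least $1$; hence $k\le\eluw$, with equality exactly when each edge is short, i.e.\ exactly when $\G$ is a short path. Thus, among all paths from $u$ to $w$, the short ones are precisely those of maximal absolute length $a(\G)=\eluw$. Invoking Dyer's identity $\wt{R}_{uw}(q)=\sum_{\G}q^{a(\G)}$ (sum over $<$-increasing paths), the coefficient of $q^{\eluw}$ counts exactly the $<$-increasing short paths; since $\wt{R}_{uw}(q)$ is monic of degree $\eluw$, that coefficient equals $1$, yielding existence and uniqueness simultaneously.

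For~(2), I would argue by induction on $\eluw$, the cases $\eluw\le1$ being trivial. The set of first labels of short paths from $u$ to $w$ is exactly $\{s\in T:u\to us\ \text{is a short edge and}\ us\le w\}$, since $[us,w]$ is graded and hence contains a maximal chain. The crux is to show that the first label $t_{1}$ of the unique increasing short path is the $<$-minimum of this set; granting this, I delete the initial step and apply the inductive hypothesis inside $[ut_{1},w]$. To identify $t_{1}$, I would reduce to rank two via the diamond property of Eulerian posets: any two short edges $u\to ut'$, $u\to ut''$ admitting a common cover sit inside a length-two interval whose four reflections lie in a single dihedral reflection subgroup, and the defining condition of a reflection order on that subgroup dictates which of $t',t''$ comes first in an increasing chain. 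Propagating these local comparisons to conclude that the increasing chain is globally lex-minimal --- equivalently, controlling how the fixed reflection order restricts to all the dihedral subgroups arising along competing maximal chains --- is the main obstacle, and is exactly the content of Dyer \cite{dyer2}; here I would either cite it or reproduce his dihedral analysis.

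Finally, for~(3) I apply~(1) with $u=e$ and $w=v$, obtaining a unique $<$-increasing short path $\G:e\to\cd\to v$. Being short of length $\elv$, it has $a(\G)=\el(\G)=\elv$, so by the definition of the Bruhat weight
\[
\rho(\G)=(q+1)^{(\el(\G)-a(\G))/2}q^{a(\G)}=(q+1)^{0}q^{\elv}=q^{\elv},
\]
the Poincar\'{e} weight of $v$; this is immediate once~(1) is available.
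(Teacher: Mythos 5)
First, a framing point: the paper does not prove this statement at all. It is recorded as a \emph{Fact} and attributed entirely to Dyer \cite{dyer2}, so there is no internal argument to compare yours against; the only fair comparison is between your attempt and a pure citation. Judged that way, your proposal actually does more than the paper on two of the three assertions. Your derivation of (1) is correct and self-contained relative to the paper's other stated Facts: a path from $u$ to $w$ has absolute length at most $\ell(u,w)$, with equality exactly for short paths, so in Dyer's expansion $\widetilde{R}_{uw}(q)=\sum_{\Gamma}q^{a(\Gamma)}$ the coefficient of $q^{\ell(u,w)}$ counts precisely the $<$-increasing short paths, and monicity of $\widetilde{R}_{uw}(q)$ in degree $\ell(u,w)$ forces that count to be $1$. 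Assertion (3) then follows exactly as you say, since a short path $\Gamma$ from $e$ to $v$ has $a(\Gamma)=\ell(\Gamma)=\ell(v)$, whence $\rho(\Gamma)=q^{\ell(v)}$. One caveat worth recording: this argument inverts the logical order of Dyer's own development, in which shellability and the path formula are intertwined; it is non-circular only because the path formula for $\widetilde{R}_{uw}(q)$ admits an independent inductive proof from the $\widetilde{R}$-recurrence, which is how the literature the paper leans on actually establishes it.

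The genuine gap is assertion (2), and you identify it yourself. Your outline --- induct on $\ell(u,w)$, show that the first label $t_{1}$ of the increasing chain is the $<$-minimum among first labels of short paths, and reduce pairwise comparisons to dihedral reflection subgroups through length-two intervals --- is the right skeleton, but the step you flag as ``the main obstacle'' is the entire content of the theorem. The diamond property only compares two chains that already agree except across a single length-two interval; nothing in your sketch controls how arbitrary competing short paths from $u$ to $w$ are related by such local moves, and the interaction of the fixed reflection order with \emph{all} the dihedral subgroups arising along all maximal chains is exactly what Dyer's argument manages. Declaring that you would ``either cite it or reproduce his dihedral analysis'' leaves (2) as a citation, not a proof. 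Since the paper itself only cites, your proposal is no weaker than the paper's treatment --- and strictly stronger on (1) and (3) --- but it should not be mistaken for a complete independent proof of the Fact.
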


\begin{defn}
Define the \eh{Bruhat-Poincar\'{e}} polynomial for $w$:
\[
\orapw(q)=\dsum_{v\le w} 
\ora{R}_{ev}(q).
\]
\end{defn}
Thanks to EL-shellability, 
it splits into two parts:
\[
\orapw(q)=
\sum_{\Gam \te{:short}}\rho(\Gam)
+
\sum_{\Gam \te{:long}}\rho(\Gam)
=
\P_{w}(q)+
\sum_{\Gam \te{:long}}\rho(\Gam).
\]
In particular, $\P_{w}(q)\le \orapw(q)$ 
and $\orapw(-1)=\P_{w}(-1)+0=\P_{w}(-1)$.




\subsection{examples}

\begin{ex}[Figure \ref{f3}]\hf
\begin{enumerate}
	\item $B_{3}:$ Let $s_{1}, s_{2}, s_{3}$ be distinct simple reflections such that they all commute. Let $w=s_{1}s_{2}s_{3}$ so that $[e, w]\cong B_{3}$ as Bruhat graphs.
\[
	\orapw(q)=\sum_{v\le w}\orar_{ev}(q)=
1+3q+3q^{2}+q^{3}=(1+q)^{3}.
\]
\item $D_{3}=[e, w], w=s_{1}s_{2}s_{1}, (s_{1}s_{2})^{3}=e$:
	\[
\orapw(q)=\sum_{v\le w}\ora{R}_{ev}(q)=
1+2q+2q^{2}+(q^{3}+(q+1)q)=
(1+q)^{3}.
\]
\end{enumerate}
\end{ex}
\begin{defn}
Say $[u, w]$ is \eh{Bruhat-Boolean} if 
\[
\sum_{v\in [u, w]}\ora{R}_{uv}(q)=(1+q)^{\eluw}.
\]
\end{defn}
As seen above, $B_{3}$ and $D_{3}$ are both Bruhat-Boolean.

\begin{fact}[{\cite[p.209]{billey}}]The following are equivalent:
	\begin{enumerate}
		\item $[u, w]$ is regular.
		\item Each upper subinterval $[v, w]$ of $[u, w]$ $(v\in [u, w])$ is Bruhat-Boolean.
	\end{enumerate}
\end{fact}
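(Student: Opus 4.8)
The plan is to reformulate each of the two conditions as a statement about the vertices of the Bruhat graph on $[u,w]$ and then to match them by induction on $m=\el(u,w)$.

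First I would make regularity explicit. In any Coxeter group $|\{t\in T:\el(vt)<\el(v)\}|=\el(v)$, so in the induced graph on $[u,w]$ each vertex $v$ has in-degree $|\{t:u\le vt<v\}|$ and out-degree $|\{t:v<vt\le w\}|$, and a regular interval necessarily has common degree $\el(u,w)$ (for a lower interval this is read off at the top vertex $w$, whose degree is $|\{t:\el(wt)<\el(w)\}|=\el(w)$). Hence $[u,w]$ is regular if and only if
\[
|\{t\in T:u\le vt<v\}|+|\{t\in T:v<vt\le w\}|=\el(u,w)\qquad\text{for every }v\in[u,w].
\]
I would also record the input I expect to be indispensable: regularity of $[u,w]$ is equivalent to regularity of every upper subinterval $[v,w]$ (the interval form of Carrell--Peterson, Fact \ref{cp}: rational smoothness of the whole Schubert variety versus rational smoothness at each point). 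This is the hook that forces the quantifier ``each upper subinterval'' in (2).

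Next I would unwind the Bruhat-Boolean condition. Since $\orar_{vx}(q)=\sum_{\Gamma}\rho(\Gamma)$ over the $<$-increasing paths from $v$ to $x$, and since by Dyer's EL-shellability each $x\in[v,w]$ is reached by exactly one $<$-increasing short path (of Bruhat weight $q^{\el(v,x)}$), summing over $x$ gives
\[
\sum_{x\in[v,w]}\orar_{vx}(q)=\sum_{x\in[v,w]}q^{\el(v,x)}+\sum_{\Gamma\text{ long}}\rho(\Gamma).
\]
The first sum is the rank generating function of $[v,w]$, which by Proposition \ref{p1} is at most $(1+q)^{\el(v,w)}$ coefficientwise. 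Thus $[v,w]$ is Bruhat-Boolean exactly when the long $<$-increasing paths out of $v$ contribute the coefficientwise-nonnegative polynomial $(1+q)^{\el(v,w)}-\sum_{x}q^{\el(v,x)}$; in the Boolean case there are no long paths, while in the dihedral case the long paths are exactly what fills the deficit, as the examples $B_3,D_3$ show.

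Finally I would run the induction on $m=\el(u,w)$, the cases $m\le2$ being immediate since $B_n=D_n$ there and both conditions hold. For the step I fix a right descent $s$ of $w$ and decompose $[u,w]$ into its $s$-strings $\{v,vs\}$ via the lifting property. Pairing the two members $v<vs$ of each string and using the shifted recurrence $\orar_{u,vs}(q)=q\,\orar_{uv}(q)+(q+1)\,\orar_{us,v}(q)$ (valid when $s$ is an ascent of $u$, automatic for a lower interval), the string contributes
\[
\orar_{uv}(q)+\orar_{u,vs}(q)=(1+q)\bigl(\orar_{uv}(q)+\orar_{us,v}(q)\bigr),
\]
so the factor $(1+q)$ matching $(1+q)^m=(1+q)(1+q)^{m-1}$ appears automatically, and summing over strings reduces $\sum_x\orar_{ux}$ to the analogous sums over the shorter intervals $[u,ws]$ and $[us,ws]$, to which the inductive hypothesis applies. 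The main obstacle is the bookkeeping of the long paths through this reduction (and the genuine case analysis when $s$ is a descent of $u$): one must show that the long-path sum equals $(1+q)^{\el(v,w)}-\sum_x q^{\el(v,x)}$ at \emph{every} vertex precisely when the degree condition of the first paragraph holds. Here I expect to lean on the inheritance of regularity to all upper subintervals, which is the genuinely nontrivial ingredient and the reason (2) must quantify over every $[v,w]$ rather than over $[u,w]$ alone.
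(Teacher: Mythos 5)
Your attempt has two genuine gaps, and it is worth noting first that there is no proof in the paper to compare against: the statement is quoted as a Fact from Billey--Lakshmibai (p.~209); it is the Carrell--Peterson/Deodhar theorem on rational smoothness, whose known proofs rest on Deodhar's inequality (Dyer's nil-Hecke/reflection-subgroup argument) or on the geometry of Schubert varieties. The first gap is circularity. The ``indispensable input'' you record --- that $[u,w]$ is regular if and only if every upper subinterval $[v,w]$ is regular --- is not Fact~\ref{cp}: that Fact only relates $\av\P_{w}(q)=\elw/2$ to regularity of the lower interval $[e,w]$, and says nothing about upper subintervals or about general $[u,w]$. The inheritance statement is essentially equivalent to the Fact you are trying to prove (given the Fact, regularity of $[u,w]$ makes every $[v',w]$ with $v'\in[v,w]$ Bruhat-Boolean, hence every $[v,w]$ regular by the Fact applied to $[v,w]$; conversely your argument needs inheritance as input), and its geometric justification ``smooth everywhere versus smooth at each point'' is precisely Carrell--Peterson's theorem, unavailable combinatorially and not even formulated for intervals that are not lower intervals. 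So the hard content has been assumed, not proved.

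The second gap is the induction step. With $s$ a right descent of $w$ and an ascent of $u$, your string computation $\sum_{x\in[u,w]}\orar_{ux}(q)=(1+q)\sum_{v}\bigl(\orar_{uv}(q)+\orar_{us,v}(q)\bigr)$ (sum over string bottoms $v$) is correct as algebra, but the bottoms form the set $\{v\in[u,ws]\mid vs>v\}$, which is in general a proper subset of $[u,ws]$: tops $vs$ with $vs\le ws$ also lie in $[u,ws]$ (already in $D_{3}=[e,s_{1}s_{2}s_{1}]$ with $s=s_{1}$, the element $s_{1}\in[e,s_{1}s_{2}]$ is a top). So the sum does not reduce to the ``analogous sums over $[u,ws]$ and $[us,ws]$,'' and the bookkeeping you defer is exactly where the difficulty of the theorem lives. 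Worse, even granting such a reduction, the inductive hypothesis cannot be applied: regularity does not pass from $[u,w]$ to $[u,ws]$. The paper's own examples show this: $[e,4321]$ is regular (the full group $A_{3}$ is Bruhat-Boolean), while $[e,4231]$ is irregular, and $4231=4321\cdot s_{2}$ with $s_{2}$ a right descent of $4321$. Hence neither direction of the equivalence propagates through your descent decomposition, and the approach cannot be completed without importing Deodhar's inequality (Dyer \cite{dyer1}) or comparable machinery --- which is why the paper, like Billey--Lakshmibai, treats this statement as a cited Fact rather than proving it.
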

In particular, 
if $[e, w]$ is regular, then $[e, w]$ itself must be Bruhat-Boolean:
\[
\orapw(q)=\dsum_{v\in[e, w]} \ova{R}_{ev}(q)=(1+q)^{\el(w)}. 
\]

\begin{ob}
The finite Coxeter group $W=[e, w_{0}]$ ($w_{0}$ longest element) is Bruhat-Boolean.
In particular, $\sum_{v\in W}|v|=2^{\el(w_{0})}$.
This is because $W=[e, w_{0}]$ is $|\el(w_{0})|$-regular 
and 
\[
\sum_{v\in W}\ora{R}_{ev}(q)
=\sum_{v\le w_{0}}\ora{R}_{ev}(q)=(1+q)^{\el(w_{0})}.
\]
For example, 
$\sum_{v\in A_{4}}|v|=2^{6}=64$.
\end{ob}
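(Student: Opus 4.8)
The plan is to derive both assertions from the Billey--Lakshmibai equivalence \cite[p.209]{billey} (regularity of $[u,w]$ $\Leftrightarrow$ every upper subinterval is Bruhat-Boolean), so that the entire task reduces to verifying that the Bruhat graph on all of $W$ is regular. First I would recall the standard fact that $\wo$ is the unique $\le$-maximal element of a finite Coxeter group, whence $e\le v\le\wo$ for every $v\in W$ and therefore $W=[e,\wo]$ as posets, and as Bruhat graphs.

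The crux is the next step: the Bruhat graph on $W=[e,\wo]$ is $\el(\wo)$-regular. Fix $v\in W$. Since an edge joining $v$ and $u$ means $v^{-1}u\in T$ and $T$ is stable under inversion, the neighbours of $v$ are exactly the elements $vt$ with $t\in T$. The map $t\mapsto vt$ is injective, so $v$ has precisely $|T|$ neighbours; and for each such $t$ one has $vt\ne v$, so exactly one of $\el(vt)>\el(v)$ or $\el(vt)<\el(v)$ holds, contributing a single directed edge (either $v\to vt$ or $vt\to v$). Hence every vertex has total degree $|T|$. Because $W$ is finite, $|T|$ equals the number of positive roots, which is $\el(\wo)$; therefore every vertex has degree $\el(\wo)$, i.e.\ $[e,\wo]$ is regular.

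Now I would invoke \cite[p.209]{billey}: a regular lower interval is Bruhat-Boolean, so that
\[
\sum_{v\le\wo}\ora{R}_{ev}(q)=(1+q)^{\el(\wo)},
\]
which is the first claim. Evaluating at $q=1$ and using $|v|=\ora{R}_{ev}(1)$ gives
\[
\sum_{v\in W}|v|=(1+1)^{\el(\wo)}=2^{\el(\wo)},
\]
the second claim.

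The only non-formal ingredient is the regularity step, and inside it the identity $|T|=\el(\wo)$. Alternatively one could cite smoothness of the full flag variety, or (in type $A$) note that $\wo$ avoids the patterns $3412$ and $4231$ and apply the pattern-avoidance criterion; but the uniform degree count above is the cleanest and type-independent route, and it is where all the content sits.
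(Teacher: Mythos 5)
Your proof is correct and follows essentially the same route as the paper: establish that $W=[e,w_{0}]$ is $\el(w_{0})$-regular, invoke the Billey--Lakshmibai equivalence of \cite[p.209]{billey} to conclude that the regular lower interval $[e,w_{0}]$ is Bruhat-Boolean, and evaluate at $q=1$. The only difference is that you spell out the regularity step (the neighbours of any vertex $v$ are exactly $\{vt\mid t\in T\}$, each pair $\{v,vt\}$ carrying one directed edge, with $|T|=\el(w_{0})$ since $W$ is finite), which the paper asserts without proof.
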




\begin{ex}
Let us see two intervals $[u, w]$ such that $u\to w$ and $\eluw=5$. 
One is regular and the other is irregular.
\begin{enumerate}
\item $D_{5}$ (Figure \ref{f3}):
Let $D_{5}=[e, w]$ with $w=s_{1}s_{2}s_{1}s_{2}s_{1}, s_{i}\in S, (s_{1}s_{2})^{5}=e$.
Also, let $u=e$ and $v_{i}, v_{i}'$ be two elements of 
level $i$ $(1\le i\le 4)$. Thus 
$\ova{R}_{uv_{i}}(q)=\ova{R}_{uv'_{i}}(q)$ for such $i$ 
because of combinatorial invariance of 
$R$-polynomials for dihedral intervals (if $[u, w]\cong [x, y]$ as posets and they are both dihedral, 
then $R_{uw}(q)=R_{xy}(q)$ as a consequence of 
the combinatorial invariance of complete \c\d-index for dihedral intervals by Blanco \cite[Lemma 3.2]{blanco3}). 
We can compute the following by induction (see also Table \ref{t2}):
\begin{align*}
	\olr_{uu}(q)&=1,
	\\\olr_{uv_{1}}(q)&=q,
	\\\olr_{uv_{2}}(q)&=q^{2},
	\\\olr_{uv_{3}}(q)&=q^{3}+(q+1)q,
	\\\olr_{uv_{4}}(q)&=q^{4}+2(q+1)q,
	\\\olr_{uw}(q)&=q^{5}+3(q+1)q^{3}+(q+1)^{2}q.
\end{align*}
Altogether, the Bruhat-Poincar\'{e} polynomial of $D_{5}$ is 
\begin{align*}
	\sum_{v\in[u,w]} \olr_{uv}(q)&=
1+2q+2q^{2}+
2(q^{3}+(q+1)q)+2(q^{4}+2(q+1)q^{2})
\\&\ph{==}+(q^{5}+3(q+1)q^{3}+(q+1)^{2}q)
	\\&=(1+q)^{5}.
\end{align*}
\item $[1234, 4231]$ (Figure \ref{f2} and Table \ref{t1}):
\begin{align*}
	\P_{4231}(q)
&=1+3q+5q^{2}+6q^{3}+4q^{4}+q^{5},
	\\\ora{\P}_{4231}(q)&=1+3q+5q^{2}+6q^{3}+4q^{4}+q^{5}
	\\&\ph{==}+\k{2q(q+1)+4q(q+1)q^{2}+2(q+1)q^{3}+(q+1)^{2}q}
	\\&=(1+q)^{3}(1+3q+q^{2}).
	\end{align*}
Although
$M
\k{\ova{\P}_{4231}(q)}
=
(2q^{1/2})^{3}(5q^{5/5})=
40q^{100/40}$ (the average here is $100/40=5/2=\elw/2$), 
$[1234, 4231]$ is \eh{not} Bruhat-Boolean and hence irregular;
$\ora{\P}_{4231}(q)$ is ``slightly larger" than $(1+q)^{5}$.
\end{enumerate}
\end{ex}






\begin{figure}
\caption{Bruhat graph of $B_{3}$, $D_{3}$ and $D_{5}$}
\label{f3}
\begin{center}
\mb{}\hf
\begin{minipage}[c]{.3\tw}
\begin{xy}
0;<5mm,0mm>:
,0+(-12,0)*{\ci}="b1"*+++!U{}
,0+(-12,2)*{\ci}="am"*+++!U{}
,0+(-12,5)*{\ci}="cm"*+++!U{}
,(-3,2)+(-12,0)*{\ci}="al1"*++!R{}
,(3,2)+(-12,0)*{\ci}="ar1"*++!L{}
,(-3,5)+(-12,0)*{\ci}="cl1"*++!R{}
,(3,5)+(-12,0)*{\ci}="cr1"*++!L{}
,(0,7)+(-12,0)*{\ci}="t1"*+++!D{}
,\ar@{->}"b1";"am"
,\ar@{->}"b1";"ar1"
,\ar@{->}"b1";"al1"
,\ar@{<-}"cr1";"am"
,\ar@{<-}"cm";"ar1"
,\ar@{->}"al1";"cl1"
,\ar@{->}"ar1";"cr1"
,\ar@{->}"cl1";"t1"
,\ar@{->}"cr1";"t1"
,\ar@{->}"cm";"t1"
,\ar@{->}"al1";"cm"
,\ar@{->}"am";"cl1"
\end{xy}
\end{minipage}
\hf
\begin{minipage}[c]{.3\tw}
\begin{xy}
0;<5mm,0mm>:
,0+(-12,0)*{\ci}="b1"*+++!U{}
,(-3,2)+(-12,0)*{\ci}="al1"*++!R{}
,(3,2)+(-12,0)*{\ci}="ar1"*++!L{}
,(-3,5)+(-12,0)*{\ci}="cl1"*++!R{}
,(3,5)+(-12,0)*{\ci}="cr1"*++!L{}
,(0,7)+(-12,0)*{\ci}="t1"*+++!D{}
,\ar@{->}"b1";"al1"
,\ar@{->}"b1";"ar1"
,\ar@{->}"al1";"cl1"
,\ar@{->}"ar1";"cr1"
,\ar@{->}"cl1";"t1"
,\ar@{->}"cr1";"t1"
,\ar@{->}"al1";"cr1"
,\ar@{->}"ar1";"cl1"
,\ar@{->}"b1";"t1"
\end{xy}
\end{minipage}
\hf
\begin{minipage}[c]{.3\tw}
\begin{xy}
0;<6mm,0mm>:
,0*{\ci}="b"*+++!U{}
,(-3,2)*{\ci}="l1"*++!R{}
,(3,2)*{\ci}="r1"*++!L{}
,(-3,4)*{\ci}="l2"*++!R{}
,(3,4)*{\ci}="r2"*++!L{}
,(-3,6)*{\ci}="l3"*++!R{}
,(3,6)*{\ci}="r3"*++!L{}
,(-3,8)*{\ci}="l4"*++!R{}
,(3,8)*{\ci}="r4"*++!L{}
,(0,10)*{\ci}="t"*+++!D{}
,\ar@{->}"b";"l1"
,\ar@{->}"b";"r1"
,\ar@{->}"r1";"r2"
,\ar@{->}"r1";"l2"
,\ar@{->}"l1";"r2"
,\ar@{->}"l1";"l2"
,\ar@{->}"l2";"r3"
,\ar@{->}"l2";"l3"
,\ar@{->}"r2";"r3"
,\ar@{->}"r2";"l3"
,\ar@{->}"l3";"l4"
,\ar@{->}"l3";"r4"
,\ar@{->}"r3";"l4"
,\ar@{->}"r3";"r4"
,\ar@{->}"l4";"t"
,\ar@{->}"r4";"t"
,\ar@{->}@/_{3ex}/"l1";"l4"
,\ar@{->}"l1";"r4"
,\ar@{->}"r1";"l4"
,\ar@{->}@/^{3ex}/"r1";"r4"
,\ar@{->}"l2";"t"
,\ar@{->}"r2";"t"
,\ar@{->}"b";"t"
\end{xy}
\end{minipage}\hf
\mb{}
\end{center}

\end{figure}
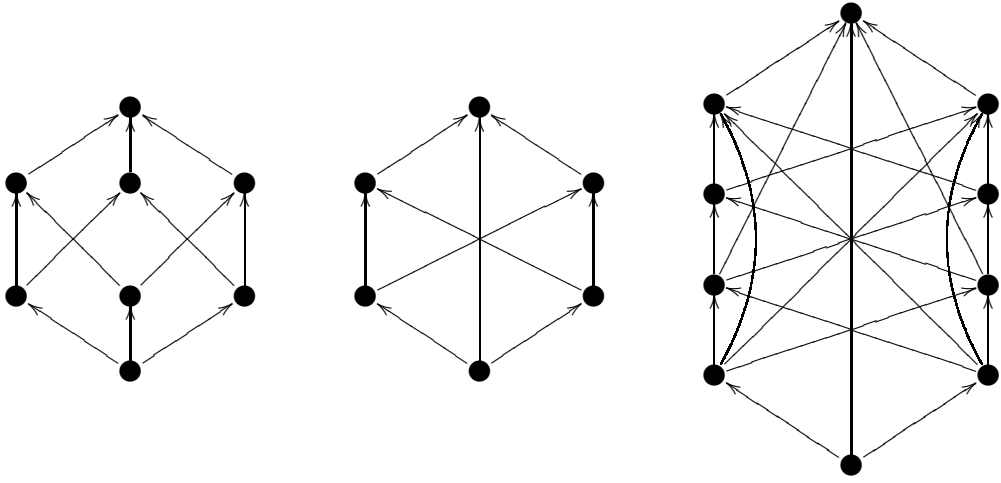





%




%



\subsection{new criterion of irregularity of lower intervals}

Recall that Carrell-Peterson proved 
if $\av(\P_{w}(q))=\ff{|E(w)|}{|V(w)|}\ne \ff{\elw}{2}$, then 
$[e, w]$ is irregular.
We can now generalize this result to 
$\av(\ora{\P}_{w}(q))=\ff{\sum_{v\le w}\|v\|}{\sum_{v\le w}|v|}$.
\begin{thm}\label{th2}
If $\av(\ora{\P}_{w}(q))\ne \ff{\elw}{2}$, then 
$[e, w]$ is irregular.
\end{thm}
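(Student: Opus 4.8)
The plan is to prove the contrapositive: assuming $[e,w]$ is regular, I will show $\av(\orapw(q)) = \ff{\el(w)}{2}$, which is precisely the negation of the hypothesis. The essential structural input is already available above as the stated consequence of the Fact [Billey, p.209]: regularity of $[e,w]$ forces $[e,w]$ to be Bruhat-Boolean, so that
\[
\orapw(q) = \dsum_{v\in[e, w]} \ova{R}_{ev}(q) = (1+q)^{\el(w)}.
\]
Under the regularity hypothesis the whole computation therefore collapses to evaluating $\av$ on the single polynomial $(1+q)^{\el(w)}$.

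For this I would invoke the multiplicativity of the average under products proved earlier, namely $\av(fg)=\av(f)+\av(g)$ for nonzero $f,g$. Since $(1+q)\big|_{q=1}=2$ and its derivative is $1$, we have $\av(1+q)=1/2$, and a one-line induction gives
\[
\av\k{(1+q)^{\el(w)}} = \el(w)\,\av(1+q) = \ff{\el(w)}{2}.
\]
Equivalently, one may read this off the monomialization, since $M\k{(1+q)^{\el(w)}} = (2q^{1/2})^{\el(w)} = 2^{\el(w)}q^{\el(w)/2}$ and monomialization preserves the average. Either route yields $\av(\orapw(q)) = \el(w)/2$, completing the contrapositive.

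There is no genuine computational obstacle here; the regular $\Rightarrow$ Bruhat-Boolean fact does all the work, and what remains is the trivial average of a power of $1+q$. The point I would instead emphasize, to prevent the reader from over-reading the statement, is that \emph{unlike} Carrell-Peterson (Fact \ref{cp}) this is strictly a one-way implication and \emph{not} an equivalence. The converse fails: $[1234,4231]$ is irregular, yet $\ora{\P}_{4231}(q) = (1+q)^{3}(1+3q+q^{2})$ has $\av = 3/2 + 1 = 5/2 = \el(4231)/2$, so the Bruhat-Poincar\'e average does not detect its irregularity (whereas the ordinary Poincar\'e average $52/20$ does). Thus Theorem \ref{th2} furnishes only a sufficient condition for irregularity, and I would flag this asymmetry explicitly.
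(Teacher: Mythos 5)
Your proof is correct and is essentially identical to the paper's: both argue the contrapositive, invoking the fact that regularity forces $[e,w]$ to be Bruhat-Boolean so that $\ora{\P}_{w}(q)=(1+q)^{\el(w)}$, whose average is $\el(w)/2$ by multiplicativity of $\av$. Your added observation that the implication is strictly one-way is also correct and consistent with the paper's own computation, which shows $\av\k{\ora{\P}_{4231}(q)}=5/2=\el(4231)/2$ even though $[1234,4231]$ is irregular.
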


\begin{proof}
Suppose $[e, w]$ is regular. 
Then, all upper subintervals of $[e, w]$ are Bruhat-Boolean.
In particular, 
\[
\orapw(q)=\sum_{v\le w} \ora{R}_{ev}(q)=(1+q)^{\elw}.
\]
Therefore, $\av(\orapw(q))=\av((1+q)^{\elw})=\ff{\elw}{2}$. 
\end{proof}

\begin{ex}
\[
\ova{\P}_{3412}(q)=1+3q+5q^{2}+4q^{3}+q^{4}+2(q+1)q
=(1+q)(1+4q+3q^{2}+q^{3})
\]
\[
M(\ova{\P}_{3412}(q))=M(1+q)
M(1+4q+3q^{2}+q^{3})
=
(2q^{1/2})(9q^{13/9})=18q^{35/18}.
\]
Clearly, 
\[
\av\ova{\P}_{3412}(q)=\ff{35}{18}\ne 2=\ff{\elw}{2}.
\]
Therefore, apart from the characterization of singular permutations, 
[1234, 3412] is irregular (note: $\av\ova{\P}_{3412}(q)<2$ while $\av{\P}_{3412}(q)>2$).
\end{ex}

\renewcommand{\f}{\wt{f}}
	
\subsection{higher Deodhar inequality}
For each $[u, w]$, define the following integer sequence $(\f_{i})$:
For each $i$ with $0\le i\le \eluw$, let 
\[
\f_{i}=\f_{i}(u, w)=[q^{i}]\k{\dsum_{v\in [u, w]} \ora{R}_{uv}(q)}
\]
where $[q^{i}](P(q))$ denotes the coefficient of $q^{i}$ in the polynomial $P(q)$.
Clearly, $\f_{0}=1$ since the only $v=u$ term contributes to the constant term and $\ora{R}_{uu}(q)=1$.
What about 
\[
\f_{1}=[q]\k{\sum_{v\in [u, w]} \ora{R}_{uv}(q)}
?
\]
Recall that the weight of a Bruhat path $\G$ is $(q+1)^{(\el(\G)-a(\G))/2}q^{a(\G)}$; only the weight of Bruhat paths involving $q$-term
is one for length 1 (i.e. an edge) with 
\[
[q]\k{(q+1)^{(\el(\G)-1)/2}q}
=1.
\]
Denoting by $\out_{w}(u)$ the out-degree of $u$ in $[u, w]$, that is, 
\[
\out_{w}(u)=|\{v\in [u,w]\mid u\to v\}|, 
\]
we have 
\[
\f_{1}
=
[q]\k{\sum_{v:u\to v\le w} (q+1)^{(\el(u\to v)-1)/2}q}
=
\sum_{v:u\to v\le w}1=
\out_{w}(u).
\]
Thanks to \eh{Deodhar inequality} (Dyer \cite{dyer1}), there is the simple  lower bound of $\f_{1}$ as 
\[
\f_{1}=\out_{w}(u)\ge \eluw.
\]
In fact, this $\ge$ is strict if and only if $[u, w]$ is irregular.
Next, it is natural to ask about $q^{2}$-term:
Paths $\G$ whose weight involving $q^{2}$-term are only ones of absolute length 1 or 2.
Those weights are of the form 
\[
(q+1)^{(\el-1)/2}q
\te{  or }
(q+1)^{(\el-2)/2}q^{2} 
\]
where $\el=\el(\G)$.
Note that 
\[
[q^{2}]\k{(q+1)^{(\el-1)/2}q}
=\ff{\el-1}{2}, \q 
[q^{2}]\k{(q+1)^{(\el-2)/2}q^{2}}
=1.
\]
This leads us to some weighted counting of edges and paths of length 2. Put 
\begin{align*}
	p_{1}&=p_{1}(u, w)=\sum_{u\to v\le w}\ff{\eluv-1}{2}
	=\sum_{u\to v\le w}(h(u, v)-1),
	\\p_{2}&=p_{2}(u, w)=|\{\Gam:u\to v_{1}\to v_{2}\le w,  \text{$<$-increasing}\}|
\end{align*}
so that $\f_{2}=p_{1}+p_{2}$.
\begin{thm}[higher Deodhar inequality]\label{th3}
For all $[u, w]$, we have 
\[
\f_{2}\ge \binom{\eluw}{2}.\] 
Moreover, if $\f_{2}\gneq\binom{\eluw}{2}$, then $[u, w]$ is irregular.
\end{thm}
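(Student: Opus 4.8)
The plan is to deduce Theorem \ref{th3} from a single coefficientwise estimate on the Bruhat--Poincar\'e polynomial of the interval $[u,w]$. Writing $S(u,w)=\sum_{v\in[u,w]}\ora{R}_{uv}(q)$, the key lemma I would isolate is
\[
S(u,w)\ge (1+q)^{\eluw}\qquad\text{coefficientwise.}
\]
Granting this, the inequality $\f_2\ge\binom{\eluw}{2}$ is immediate: by the weight computation preceding the theorem we have $\f_2=[q^2]S(u,w)$, and $[q^2](1+q)^{\eluw}=\binom{\eluw}{2}$. The \emph{moreover} clause then follows with no extra work. Indeed, if $[u,w]$ is regular, the quoted Fact says every upper subinterval is Bruhat-Boolean, in particular $[u,w]$ itself, so by definition $S(u,w)=(1+q)^{\eluw}$ and hence $\f_2=\binom{\eluw}{2}$; contrapositively, $\f_2\gneq\binom{\eluw}{2}$ forces $[u,w]$ to be irregular.

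To prove the lemma I would induct on $\el(w)$, the cases $\eluw\le 2$ being exactly the Boolean intervals, for which equality holds by a direct check. For the inductive step choose $s\in S$ with $ws<w$. The productive case is $us>u$: here the lifting property shows that $[u,w]$ is closed under right multiplication by $s$, so it is partitioned into pairs $\{v^-,v^-s\}$ with $v^-s>v^-$. Applying the $R$-polynomial recursion to $R_{u,v^-s}(q)$ with the descent $s$ of $v^-s$ and substituting $q\mapsto q+1$ gives
\[
\ora{R}_{u,v^-s}(q)=q\,\ora{R}_{u,v^-}(q)+(q+1)\,\ora{R}_{us,v^-}(q).
\]
Summing the pair contributions yields $S(u,w)=(1+q)\sum_{v^-}\bigl(\ora{R}_{u,v^-}(q)+\ora{R}_{us,v^-}(q)\bigr)$, and I would then identify the residual sum over the $s$-minimal elements with $S(u,ws)$ (a parabolic identity that must itself be verified, again from the lifting property; it checks out on small examples such as $[1234,w_0]$ in $A_2$). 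This gives the clean recursion $S(u,w)=(1+q)S(u,ws)$, and the inductive hypothesis $S(u,ws)\ge(1+q)^{\eluw-1}$ closes the step.

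The hard part will be the remaining case, in which every right descent of $w$ is also a right descent of $u$ (and similarly on the left), so that no choice of $s$ delivers $us>u$. This genuinely occurs: for instance $[2143,4231]$ in $A_3$ has $D_R(w)=D_L(w)=\{s_1,s_3\}=D_R(u)=D_L(u)$. There the factorization above breaks, because the $s$-minimal elements of $[u,w]$ do not admit $s$ as a descent and the recursion cannot be applied to them directly. My plan for this case is to exploit $s\in D_R(u)\cap D_R(w)$ through the identity $\ora{R}_{u,v}(q)=\ora{R}_{us,vs}(q)$, valid when $vs<v$, to compare $S(u,w)$ with $S(us,ws)$ and so reduce $\el(u)+\el(w)$, while handling the $s$-minimal part by a separate lifting argument. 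I expect the main technical obstacle to be precisely the bookkeeping here: proving the parabolic identity used in the productive case, and controlling the correction terms in this degenerate case so that the coefficientwise bound $S\ge(1+q)^{\eluw}$ is preserved under the induction.
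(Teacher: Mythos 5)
Your reduction is sound as bookkeeping: $\f_2$ is by definition $[q^2]\bigl(\sum_{v\in[u,w]}\ora{R}_{uv}(q)\bigr)$, we have $[q^2](1+q)^{\eluw}=\binom{\eluw}{2}$, and the \emph{moreover} clause does follow from the quoted Fact exactly as you say. But two things go wrong upstream. First, the coefficientwise bound $\sum_{v\in[u,w]}\ora{R}_{uv}(q)\ge(1+q)^{\eluw}$ that you isolate as your key lemma is not a lemma available in this paper: it is stated in Section \ref{s5} as an open problem (``Prove or disprove it''), precisely because Theorem \ref{th3} only controls the coefficients of $q^0$, $q^1$, $q^2$. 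The paper's own proof is entirely different and much shorter: it is a one-line reduction to an earlier result of the author (Kobayashi \cite[Theorem 6.2]{kobayashi}, the case $x=u$), which bounds the weighted count $p_1+p_2$ of edges and increasing length-two paths out of $u$; no global coefficientwise statement is needed. So your route passes through a strictly stronger statement whose truth is unknown.

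Second, and decisively, the induction you sketch for that lemma fails. The pair decomposition of $[u,w]$ and the identity $\ora{R}_{u,v^-s}(q)=q\,\ora{R}_{u,v^-}(q)+(q+1)\,\ora{R}_{us,v^-}(q)$ are correct, but the ``parabolic identity'' $\sum_{v^-}\bigl(\ora{R}_{u,v^-}+\ora{R}_{us,v^-}\bigr)=S(u,ws)$, hence the clean recursion $S(u,w)=(1+q)S(u,ws)$, is false. Observe that for $u=e$ every $s\in S$ satisfies $us>u$, so your productive case would apply to every lower interval and iterate down to $S(e,e)=1$, forcing $\orapw(q)=(1+q)^{\el(w)}$ for \emph{all} $w$; this contradicts the paper's computation $\ora{\P}_{4231}(q)=(1+q)^{3}(1+3q+q^{2})\ne(1+q)^{5}$. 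The precise failure point is the one you glossed over: the $s$-minimal elements of $[u,w]$ form the set $\{v\in[u,ws]\mid vs>v\}$, which is in general a \emph{proper} subset of $[u,ws]$, while the residual sum carries the extra terms $\ora{R}_{us,v^-}$ that do not occur in $S(u,ws)$. In $A_2$, your test case, these two discrepancies cancel by accident. In $A_3$ they do not: take $u=e$, $w=3412$, $s=s_2$, so $ws=3142$ and $[e,3142]$ is Boolean with $S(e,3142)=(1+q)^3$; the element $1324\in[e,3142]$ is not $s_2$-minimal, and summing over the seven $s_2$-minimal elements using Table \ref{t1} gives
\[
\sum_{v^-}\bigl(\ora{R}_{e,v^-}(q)+\ora{R}_{s_2,v^-}(q)\bigr)=1+4q+4q^2+q^3\ne(1+q)^3 .
\]
So the recursion does not close, independently of the degenerate case you flag at the end; any repair would have to prove an inequality between the residual sum and $S(u,ws)$, which is essentially as hard as the open problem itself.
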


\begin{proof}
This is a consequence of Kobayashi \cite[Theorem 6.2]{kobayashi} ($x=u$ case).
\end{proof}

\begin{ex}
Let $u=1234, w=3412$.
\begin{align*}
	p_{1}&=\sum_{u\to v\le w} (h(u, v)-1)
	\\&=h(u\to 2134)+h(u\to 1324)+h(u\to 1243)+h(u\to 3214)+h(u\to 1432)
	\\&=0+0+0+1+1=2,
	\\p_{2}&=|\{1342, 1423, 2314, 3124, 3412\}|=4+1=5.
\end{align*}
Therefore, 
\[
\f_{2}=2+5=7>6=\binom{4}{2}.
\]
Again, apart from pattern avoidance, we can now say that $[1234, 3412]$ is irregular.
\end{ex}

%

\section{Lower and upper bounds of shifted ${R}$-polynomials}\label{s4}

In this section, we will prove Theorem \ref{th4} on the sharp lower and upper bounds of $\ova{R}$-polynomials.

\subsection{lower and upper bounds of $\wt{R}$-polynomials}

First, let us review several results on $\wt{R}$-polynomials proved by Brenti \cite[Theorem 5.4, Corollary 5.5, Theorem 5.6]{brenti1}.
\begin{fact}
Let $u\le v$.
\begin{enumerate}
\item $u\le x\le v$ $\then$ $q^{\el(x, v)}\wtr_{ux}(q)\le \wtr_{uv}(q)$.
\item Suppose $W$ is finite. Then $u\le x\le y\le v$ $\then$ $q^{\el(u, x)+\el(y, v)}\wtr_{xy}(q)\le \wtr_{uv}(q)$.
\item Let $x\le y$ in a weak order and $y\le z$. Then, 
$q^{\elxy}\wtr_{yz}(q)\le \wtr_{xz}(q)$.
\end{enumerate}
\end{fact}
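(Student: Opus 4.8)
The plan is to reduce all three inequalities to two elementary \emph{single-step} lemmas read off directly from the $\wtr$-recursion (item (3) of the Fact defining $\wtr_{uw}(q)$), and then to propagate them using the coefficientwise monotonicity recorded in Lemma \ref{l2}(2): multiplying a coefficientwise inequality in $\nn[q]$ by a monomial $q^{j}$, or adding a polynomial with nonnegative coefficients, preserves $\le$. Throughout, whenever $\wtr_{ab}(q)$ appears with $a\not\le b$ I read it as $0$, which is harmless since $0\le f$ for every $f\in\nn[q]$. The two building blocks are a \emph{bottom step} (SSB): if $s\notin D_R(a)$ and $as\le z$ then $q\,\wtr_{as,z}\le \wtr_{az}$; and a \emph{top step} (SST): if $s\in D_R(v)$ then $q\,\wtr_{u,vs}\le \wtr_{uv}$ for every $u$.

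\textbf{The top step and statement (1).} SST is immediate from the recursion once we fix $s\in D_R(v)$: if $s\notin D_R(u)$ then $\wtr_{uv}=q\wtr_{u,vs}+\wtr_{us,vs}\ge q\wtr_{u,vs}$; and if $s\in D_R(u)$ then $\wtr_{uv}=\wtr_{us,vs}$, so SST becomes SSB applied to the bottom pair $(us,u)$ (note $s\notin D_R(us)$) over the top $vs$. To prove (1) I would induct on $\el(v)$. The case $x=v$ is an equality. For $x<v$ fix $s\in D_R(v)$. If $s\in D_R(x)$, expand both $\wtr_{ux}$ and $\wtr_{uv}$ by the recursion in the subcases $s\in D_R(u)$ and $s\notin D_R(u)$; the Lifting Property supplies $us\le xs\le vs$ and $u\le xs\le vs$ as needed, with $\el(xs,vs)=\el(x,v)$, and the inductive hypothesis at $vs$ yields the claim term by term (terms whose lower endpoint fails to lie below the upper endpoint simply vanish). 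If $s\notin D_R(x)$, the Lifting Property gives $x\le vs$, so the inductive hypothesis at $vs$ gives $q^{\el(x,vs)}\wtr_{ux}\le\wtr_{u,vs}$, that is $q^{\el(x,v)}\wtr_{ux}\le q\,\wtr_{u,vs}$, and one application of SST finishes via $q\,\wtr_{u,vs}\le\wtr_{uv}$.

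\textbf{Statements (2) and (3).} For (2) I would first prove the \emph{bottom} analogue of (1): for $u\le x\le v$, $q^{\el(u,x)}\wtr_{xv}\le\wtr_{uv}$. When $W$ is finite this follows from (1) by the longest-element symmetry $\wtr_{uv}(q)=\wtr_{\wo v,\wo u}(q)$, under which $v\mapsto\wo v$ reverses Bruhat order and converts top-extension into bottom-extension while preserving the relevant length differences; this is exactly where finiteness is used. Chaining the top-extension (raise $y$ to $v$) with the bottom-extension (lower $x$ to $u$) and multiplying by the appropriate monomials gives $q^{\el(u,x)+\el(y,v)}\wtr_{xy}\le q^{\el(u,x)}\wtr_{xv}\le\wtr_{uv}$. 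For (3), since $x\le y$ in \emph{right weak order} we may write a reduced factorisation $y=x s_{1}\cdots s_{k}$ with $\el(x,y)=k$; each one-letter step is a bottom step $s_{i}\notin D_R(x s_{1}\cdots s_{i-1})$, so $k$ applications of SSB (each scaled by a suitable power of $q$) telescope to $q^{\el(x,y)}\wtr_{yz}\le\wtr_{xz}$. The weak-order hypothesis is precisely what lets the descent $s_{i}$ be read off one simple reflection at a time, which is why (3) needs neither the longest element nor finiteness.

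\textbf{Main obstacle.} The genuine work is SSB in full generality. Its easy case is $t=s$ for a chosen $t\in D_R(z)$, where $\wtr_{as,z}=\wtr_{a,zs}$ and the recursion for $\wtr_{az}$ gives the inequality outright. The delicate case is $t\ne s$: expanding both sides by $t$ produces bottom endpoints $ast=a(st)$ and $ats=a(ts)$ that do not match unless $s$ and $t$ commute, so the inductive hypothesis at $zt$ cannot be applied by naive term-matching. Resolving this requires tracking the four descent possibilities of $t$ relative to $a$ and $as$, together with the attendant Lifting-Property relations, so that the two expansions can be regrouped correctly; this bookkeeping is the crux. I would also verify at each step of (1)--(3) that the Lifting Property is invoked with the right hypotheses and that vanishing $\wtr$-terms never reverse a coefficientwise inequality. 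As a conceptual check, Dyer's model $\wtr_{uw}(q)=\sum_{\Gamma}q^{a(\Gamma)}$ makes the weak-order cases transparent: one prepends (resp. appends) the unique increasing short path on the relevant interval in a reflection order whose reflections form an initial (resp. final) section, confirming the shape of the bounds, even though the full Bruhat case of (1) still seems to require the recursive induction above.
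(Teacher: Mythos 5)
Your reduction scheme is sound as far as it goes: (1) by induction on $\el(v)$ granted SST; SST reduces to SSB when $s\in D_{R}(u)$; (3) is SSB telescoped along a reduced word $y=xs_{1}\cdots s_{k}$; and (2) chains (1) with its $\wo$-reflected form, which is indeed where finiteness enters. But the entire edifice stands on SSB, and SSB is never proved. You call its proof ``bookkeeping,'' yet the obstruction you yourself identify is structural, not clerical: expanding $\wtr_{as,z}$ and $\wtr_{az}$ by a descent $t\ne s$ of $z$ produces bottom elements $ast$ and $at$, and $ast=(at)\cdot tst$ where $tst$ is in general not a simple reflection, so the inductive hypothesis --- which only relates bottoms differing by a \emph{simple} reflection on the right --- can never be invoked, however the four descent cases are arranged. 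Nor can the $\wo$-symmetry rescue it: for finite $W$ that symmetry converts SSB at top $z$ into precisely the hard case of SST at top $\wo a$, which in turn demands SSB again, so the reduction is circular. As written, then, your proposal is a correct reduction of Brenti's three inequalities to an unproven lemma. For comparison, the paper does not prove this Fact at all --- it quotes Brenti \cite{brenti1} --- and when it needs such bounds it uses the much shorter route of Lemma \ref{l2}: Blanco's submultiplicativity $\wtr_{ux}(q)\wtr_{xv}(q)\le\wtr_{uv}(q)$ \cite{blanco4} combined with the coefficientwise minimality $q^{\el(x,v)}\le\wtr_{xv}(q)$ (each $\wtr$ being monic of the right degree) gives (1), (2) and (3) in one line each.

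The irony is that you already possess the missing piece but demote it to a ``conceptual check.'' Dyer's path formula (quoted in the paper, and independent of the chosen reflection order) proves SSB outright. Since $\el(s\wo)<\el(\wo)$, there is a reduced word for $\wo$ beginning with $s$, and the associated total order on $T$ is a reflection order whose minimum is $s$ (a standard fact). In such an order, no $<$-increasing path from $as$ to $z$ can carry the label $s$: by minimality $s$ could only be the first label, and the first edge would then be $as\os{s}{\to}(as)s=a$, which is not a Bruhat-graph edge because $\el(a)<\el(as)$. Hence prepending $a\os{s}{\to}as$ to any increasing path from $as$ to $z$ produces an increasing path from $a$ to $z$ with exactly one more edge, and the assignment is injective; Dyer's formula then gives $q\,\wtr_{as,z}(q)\le\wtr_{az}(q)$ coefficientwise, which is SSB. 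Promote this from ``check'' to proof and your argument closes up: SSB yields SST, SST feeds your induction for (1), telescoping gives (3), and --- once you also justify the symmetry $\wtr_{uv}(q)=\wtr_{\wo v,\wo u}(q)$, which you assert without proof --- the chaining gives (2).
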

These inequalities all follow from the simple fact that 
$\wt{R}_{uw}(q)=q^{\eluw}$ whenever $[u, w]$ is Boolean;
in addition, since each $\wt{R}_{uw}(q)$ is either 0 or monic 
of degree $\eluw$, $q^{n}$ is the least polynomial among 
\[
\{\wtr_{uw}(q)\mid u\le w, \eluw=n\}
\]
in coefficientwise order. 

On the other hand, Fibonacci polynomials 
($F_{0}(q)=1, F_{1}(q)=q, F_{2}(q)=q^{2}, F_{n}(q)=qF_{n-1}(q)+F_{n-2}(q)$ for $n\ge 3$)
give an upper bound of such polynomials. 
In fact, this upper bound is also best possible: $F_{n}(q)$ is the $\wt{R}$-polynomial for any dihedral interval of rank $n$ (Brenti \cite[Proposition 5.3]{brenti2}). Together, there always holds 
\[
q^{n}\le \wt{R}_{uw}(q)\le F_{n}(q)
\]
for $[u, w]$ such that $\eluw=n$. Now it is reasonable to ask 
what corresponds to these inequalities for shifted $R$-polynomials.

\subsection{lower and upper bounds of shifted ${R}$-polynomials}



%

%
%
%
%
Define a sequence of polynomials 
$(d_{n}(q))_{n=0}^{\infty}$ by 
$d_{0}(q)=1, d_{1}(q)=q, d_{2}(q)=q^{2}$
and 
\[
d_{n}(q)=qd_{n-1}(q)+(q+1)d_{n-2}(q)
\q\text{for $n\ge 3$.}
\]
Call $(d_{n}(q))_{n=0}^{\infty}$ \eh{dihedral polynomials}.

It is easy to see that $d_{n}(q)$ is a monic polynomial of degree $n$ and 
morerover it is a weight: $d_{n}(q)\in \A$.
Let $d_{n}=|d_{n}(q)|, 
d'_{n}=\|d_{n}(q)\| $
denote its size and total (Table \ref{t2}).

\begin{lem}\label{l3}
If $[u, w]$ is dihedral, then $\ora{R}_{uw}(q)= d_{\eluw}(q)$.
\end{lem}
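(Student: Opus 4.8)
The plan is to obtain $\ora{R}_{uw}$ for a dihedral interval by routing through the known dihedral $\wt{R}$-polynomial and then carrying the substitution $q \mapsto q+1$ through the relation between $R$ and $\wt{R}$. Write $n = \eluw$. Since $[u,w]$ is dihedral, Brenti's result quoted above gives $\wt{R}_{uw}(q) = F_n(q)$, the $n$-th Fibonacci polynomial. Feeding this into the identity $R_{uw}(q) = q^{n/2}\,\wt{R}_{uw}(q^{1/2}-q^{-1/2})$ and replacing $q$ by $q+1$ yields
\[
\ora{R}_{uw}(q) = (q+1)^{n/2}\,F_n\bigl((q+1)^{1/2}-(q+1)^{-1/2}\bigr).
\]
Calling the right-hand side $g_n(q)$, the lemma reduces to the purely formal identity $g_n(q) = d_n(q)$ for all $n \ge 0$, which I would establish by induction, matching base cases and recurrences.

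First I would dispose of the base cases $n = 0,1,2$ by direct expansion. With $F_0 = 1$, $F_1(x) = x$, $F_2(x) = x^2$ and the abbreviation $x = (q+1)^{1/2}-(q+1)^{-1/2}$, one gets $g_0 = 1$, $g_1 = (q+1)-1 = q$, and $g_2 = (q+1)\bigl((q+1)-2+(q+1)^{-1}\bigr) = q^2$, which agree with $d_0, d_1, d_2$.

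The heart of the argument is to show that $g_n$ satisfies the dihedral recurrence $g_n = q\,g_{n-1} + (q+1)\,g_{n-2}$ for $n \ge 3$. Starting from the Fibonacci recurrence $F_n(x) = x\,F_{n-1}(x) + F_{n-2}(x)$ evaluated at the same $x$ and multiplying by $(q+1)^{n/2}$, the second summand is at once $(q+1)\cdot(q+1)^{(n-2)/2}F_{n-2}(x) = (q+1)\,g_{n-2}(q)$. For the first summand the decisive simplification is $(q+1)^{1/2}\,x = (q+1) - 1 = q$, whence $(q+1)^{n/2}\,x\,F_{n-1}(x) = q\,(q+1)^{(n-1)/2}F_{n-1}(x) = q\,g_{n-1}(q)$. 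Summing reproduces the dihedral recurrence exactly, and the induction closes.

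The one point requiring care is the bookkeeping with the half-integer powers of $q+1$: although $F_n$ is being evaluated at a non-polynomial argument, the prefactor $(q+1)^{n/2}$ clears every fractional power, so each $g_n$ is a bona fide polynomial in $q$ (consistent with $\ora{R}_{uw}$ being a polynomial). I do not expect a conceptual obstacle here — the entire content is the observation that the Fibonacci recurrence, reweighted by powers of $(q+1)$ and using $(q+1)^{1/2}x = q$, turns precisely into the defining recurrence of the dihedral polynomials.
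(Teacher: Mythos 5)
Your proof is correct, and it takes a genuinely different route from the paper's. The paper argues by induction on $n=\el(u,w)$ inside the interval itself: invoking combinatorial invariance of $R$-polynomials for dihedral intervals, it arranges $\el(us)>\el(u)$ and $\el(ws)<\el(w)$ for some $s\in S$, shifts the defining recurrence $R_{uw}(q)=(q-1)R_{u,ws}(q)+qR_{us,ws}(q)$ into $\ora{R}_{uw}(q)=q\,\ora{R}_{u,ws}(q)+(q+1)\,\ora{R}_{us,ws}(q)$, observes that $[u,ws]$ and $[us,ws]$ are again dihedral of lengths $n-1$ and $n-2$, and closes the induction against the defining recurrence of $d_n(q)$. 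You instead outsource all interval-level combinatorics to Brenti's identification $\wt{R}_{uw}(q)=F_n(q)$ for dihedral intervals (a fact the paper quotes just before this lemma) and reduce everything to the formal identity $(q+1)^{n/2}F_n\bigl((q+1)^{1/2}-(q+1)^{-1/2}\bigr)=d_n(q)$, which you verify by matching base cases and recurrences via the key simplification $(q+1)^{1/2}\bigl((q+1)^{1/2}-(q+1)^{-1/2}\bigr)=q$; note that your induction also yields polynomiality of each $g_n$ for free, so the worry about fractional powers resolves itself. What each approach buys: yours is purely computational, needs no choice of $s$ and no combinatorial invariance, and exposes the structural point that the dihedral polynomials are exactly the Fibonacci polynomials transported through the substitution relating $R$ to $\wt{R}$; the paper's version stays entirely within the $R$-polynomial recursion and thereby rehearses precisely the reduction (convenient $s$, shifted recurrence, induction on length) that it reuses immediately afterwards in the proof of Theorem \ref{th4}, where no Fibonacci shortcut is available since general intervals are not dihedral.
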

\begin{proof}
Induction on $n=\eluw$.
The cases for $n=\eluw\le 2$ 
coincide with Boolean ones: $\ora{R}_{uw}(q)= q^{\eluw}=d_{\eluw}(q)$.
Now suppose	$n=\eluw\ge 3$. Thanks to the combinatorial invariance of $R$-polynomials for dihedral intervals, 
%
we may assume that $[u, w]$ is dihedral, $\el(us)>\elu$ and $\el(ws)<\elw$ for some $s\ins $:
\[
R_{uw}(q)=(q-1)R_{u, ws}(q)+qR_{us, ws}(q),
\]
that is, 
\[
\ora{R}_{uw}(q)=q\ora{R}_{u, ws}(q)+(q+1)\ora{R}_{us, ws}(q).
\]
The inequality $us<ws$ now holds since 
\[
\el(us)=\elu+1\le (\elw-3)+1=\elw-2<\elw-1=\el(ws).
\]
(in a dihedral interval, $x<y \iff \elx<\ely$).
It follows from the property of dihedral intervals that 
subintervals $[us, ws]$ and $[u, ws]$ 
are also dihedral posets of length $n-1, n-2$, respectively.
By inductive hypothesis, $R$-polynomials of those are $d_{n-1}(q)$ and $d_{n-2}(q)$ so that 
\[
\ora{R}_{uw}(q)=
q\ora{R}_{u, ws}(q)+(q+1)\ora{R}_{us, ws}(q)
=qd_{n-1}(q)+(q+1)d_{n-2}(q)=d_{n}(q).
\]
\end{proof}

\begin{thm}\label{th4}
Let $[u, w]$ be a Bruhat interval such that 
$\eluw=n\ge1$. Then, 
\[
q^{n}\le \olr_{uw}(q)\le d_{n}(q)
\te{  and  } 
nq^{n-1}\le \olr'_{uw}(q)\le d'_{n}(q).
\]
Moreover, 
\begin{align*}
	d_{n}(q)&=\ff{q}{q+2}\k{(q+1)^{n}-(-1)^{n}},
	\\d_{n}'(q)&=\ff{2}{(q+2)^{2}}\k{(q+1)^{n}-(-1)^{n}}
	+\ff{nq}{q+2}(q+1)^{n-1}.
\end{align*}
\end{thm}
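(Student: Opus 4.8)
The plan is to transport the already-known bounds $q^{n}\le \wt{R}_{uw}(q)\le F_{n}(q)$ through the substitution $q\mapsto q+1$, then read off the derivative bounds by differentiation and the closed forms by solving the defining recurrence. Throughout, $n=\eluw$ and $\gamma_{j}=[q^{j}]\wt{R}_{uw}(q)$, so that $\gamma_{j}\ge 0$ and, by Lemma \ref{l1}, $\gamma_{j}=0$ unless $0\le j\le n$ with $j\equiv n\pmod 2$.

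First I would isolate the substitution as a linear map. The identity obtained right after the proof of Lemma \ref{l1} reads
\[
\olr_{uw}(q)=R_{uw}(q+1)=\sum_{j}\gamma_{j}\,(q+1)^{(n-j)/2}q^{j}.
\]
Define $\Theta_{n}$ on polynomials supported on $\{q^{j}:0\le j\le n,\ j\equiv n\pmod 2\}$ by $\Theta_{n}(q^{j})=(q+1)^{(n-j)/2}q^{j}$, extended linearly, so that $\olr_{uw}(q)=\Theta_{n}(\wt{R}_{uw}(q))$. The key point is that every image $(q+1)^{(n-j)/2}q^{j}$ lies in $\nn[q]$; hence $\Theta_{n}$ is monotone for the coefficientwise order, i.e.\ $f\le g$ implies $\Theta_{n}(f)\le\Theta_{n}(g)$.

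Applying this monotonicity to $q^{n}\le\wt{R}_{uw}(q)\le F_{n}(q)$ gives $\Theta_{n}(q^{n})\le\olr_{uw}(q)\le\Theta_{n}(F_{n})$. Now $\Theta_{n}(q^{n})=(q+1)^{0}q^{n}=q^{n}$, and $\Theta_{n}(F_{n})=d_{n}(q)$ since a dihedral interval of length $n$ (for instance in $I_{2}(m)$ with $m\ge n$) has $\wt{R}$-polynomial $F_{n}$ and, by Lemma \ref{l3}, shifted $R$-polynomial $d_{n}(q)$; thus $d_{n}(q)=\Theta_{n}(F_{n})$. This establishes $q^{n}\le\olr_{uw}(q)\le d_{n}(q)$. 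For the derivative statement I would use that formal differentiation is itself coefficientwise monotone on $\nn[q]$, because $[q^{i-1}](f')=i\,[q^{i}](f)$ and $i\ge0$; differentiating the polynomial bounds then yields $nq^{n-1}\le\olr'_{uw}(q)\le d'_{n}(q)$ at once.

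It remains to evaluate the closed forms. The recurrence $d_{n}=qd_{n-1}+(q+1)d_{n-2}$ (valid for $n\ge3$) has characteristic equation $x^{2}-qx-(q+1)=(x-(q+1))(x+1)=0$, so $d_{n}=A(q+1)^{n}+B(-1)^{n}$; fitting the genuine seeds $d_{1}=q$ and $d_{2}=q^{2}$ (note the recurrence only begins at $n=3$) gives $A=q/(q+2)$ and $B=-q/(q+2)$, whence the stated formula for $d_{n}(q)$ for all $n\ge1$. Differentiating $d_{n}(q)=\tfrac{q}{q+2}\bigl((q+1)^{n}-(-1)^{n}\bigr)$ by the product rule, using $\bigl(q/(q+2)\bigr)'=2/(q+2)^{2}$, produces the stated formula for $d'_{n}(q)$. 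The one genuinely load-bearing step is the identification $\Theta_{n}(F_{n})=d_{n}$, which is exactly what makes the upper bound sharp; the monotonicity, the differentiation, and the recurrence solution are all routine once it is in hand.
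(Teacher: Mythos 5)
Your proposal is correct, but it takes a genuinely different route from the paper's. The paper proves the bounds by induction on $n=\eluw$ using the $R$-polynomial recursion: Lemma \ref{l4} reduces to the case where some $s\in S$ satisfies $\el(us)>\el(u)$ and $\el(ws)<\el(w)$, so that $\olr_{uw}(q)=q\,\olr_{u,ws}(q)+(q+1)\,\olr_{us,ws}(q)$, and the inductive hypothesis applied to the two shorter intervals gives $q^{n}\le \olr_{uw}(q)\le q\,d_{n-1}(q)+(q+1)\,d_{n-2}(q)=d_{n}(q)$; the derivative bounds then come from Lemma \ref{l41}, and the closed forms from the generating function of Lemma \ref{l5} via partial fractions. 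You instead transport Brenti's bounds $q^{n}\le \wt{R}_{uw}(q)\le F_{n}(q)$ (which the paper quotes in Section \ref{s4} as a known Fact but does not prove) through the linear operator $\Theta_{n}:q^{j}\mapsto (q+1)^{(n-j)/2}q^{j}$ on polynomials supported in degrees $j\equiv n \pmod 2$; the display following Lemma \ref{l1} says precisely that $\olr_{uw}(q)=\Theta_{n}\bigl(\wt{R}_{uw}(q)\bigr)$, monotonicity of $\Theta_{n}$ is clear since each image monomial lies in $\nn[q]$, and the identification $\Theta_{n}(F_{n})=d_{n}(q)$ follows by applying the same identity to a dihedral interval of length $n$ and combining Lemma \ref{l3} with Brenti's computation of its $\wt{R}$-polynomial. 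Your route buys the elimination of Lemma \ref{l4} and Lemma \ref{l5} entirely (you solve the recurrence by its characteristic roots $q+1$ and $-1$, correctly noting that the seeds must be $d_{1},d_{2}$ since $d_{0}=1$ violates the closed form), and it upgrades the paper's motivating analogy---dihedral bounds are to $\olr$ what Fibonacci bounds are to $\wt{R}$---into an actual derivation, which makes sharpness transparent. The paper's route buys self-containedness: its induction needs only the $R$-polynomial recursion and the combinatorial invariance used in Lemma \ref{l3}, whereas your argument inherits the full strength of Brenti's Fibonacci upper bound as a black box.
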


We give a proof after three lemmas.


\begin{lem}\label{l4}
Let $x<y$. 
Then, there exist some $x', y'$ such that 
$x'\le x, $
$y'\le y, $
$R_{x'y'}(q)=R_{xy}(q)$
 and 
 $\el(x's)>\el(x'), \el(y's)<\el(y')$ for some $s\ins $.
\end{lem}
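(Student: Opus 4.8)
The plan is to reduce the pair $(x,y)$, one simple reflection at a time, until some $s\ins$ is \emph{simultaneously} a right descent of the top element and a right ascent of the bottom element, exploiting the descent-matching branch of the $R$-polynomial recursion, which leaves the $R$-polynomial unchanged. First I would set up a dichotomy at the current pair $(x,y)$: either there already exists $s\ins$ with $\el(ys)<\el(y)$ and $\el(xs)>\el(x)$, in which case $(x',y')=(x,y)$ satisfies the conclusion immediately; or else every right descent of $y$ is also a right descent of $x$, i.e. $\{s\ins\mid \el(ys)<\el(y)\}\subseteq\{s\ins\mid \el(xs)<\el(x)\}$. In the latter case, since $x<y$ forces $y\ne e$, the set of right descents of $y$ is nonempty; choose any $s$ in it. Then $\el(xs)<\el(x)$ and $\el(ys)<\el(y)$, so the first branch of the defining recursion for $R$-polynomials gives $R_{xy}(q)=R_{xs,ys}(q)$.

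Next I would verify that the replacement $(x,y)\mapsto(xs,ys)$ stays within the hypotheses and makes genuine progress. Because $\el(xs)<\el(x)$ and $\el(ys)<\el(y)$, we have $xs\le x$ and $ys\le y$ in Bruhat order. To see $xs<ys$ (so that the new pair is again of the form required, and in particular $y'\ne e$ throughout), note that $R_{xy}(q)$ is monic of degree $\el(x,y)\ge 1$, hence nonzero; since $R_{xy}(q)=R_{xs,ys}(q)$, the polynomial $R_{xs,ys}(q)$ is nonzero, so property (1) of $R$-polynomials forces $xs\le ys$, and $\el(xs)=\el(x)-1<\el(y)-1=\el(ys)$ then gives $xs<ys$. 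Iterating this reduction produces chains $x=x_0>x_1>\cdots$ and $y=y_0>y_1>\cdots$ along which all $R$-polynomials coincide; since $\el(y_i)$ strictly decreases at each step, the process terminates, and termination can only occur in the first alternative of the dichotomy, i.e. at a pair $(x',y')$ admitting some $s\ins$ with $\el(y's)<\el(y')$ and $\el(x's)>\el(x')$. Telescoping the equalities yields $R_{x'y'}(q)=R_{xy}(q)$, and the descending chains give $x'\le x$ and $y'\le y$, which is exactly the assertion.

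The step requiring the most care is the preservation of the Bruhat relation under the reduction: one must be sure that after multiplying both elements on the right by the common descent $s$ we still compare as $xs<ys$, so that each reduced pair is a legitimate interval and the iteration never stalls at $y'=e$. I have argued this above purely from the nonvanishing of $R_{xy}(q)$ and property (1), which is self-contained; alternatively it follows from the standard lifting/subword property of Bruhat order (Bj\"{o}rner--Brenti \cite{bb}), namely that for a common right descent $s$ of $x$ and $y$ one has $x\le y\iff xs\le ys$. Everything else is routine bookkeeping on lengths.
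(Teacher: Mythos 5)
Your proposal is correct and takes essentially the same approach as the paper: you repeatedly strip a common right descent $s$ using the descent-matching branch of the recursion (which preserves the $R$-polynomial), and terminate by strict decrease of length, exactly as the paper's algorithm does (the paper tracks $\ell(x_i)$ reaching $0$, you track $\ell(y_i)$; these are equivalent). The only addition is your verification that $xs<ys$ is maintained (via nonvanishing of $R_{xy}(q)$ or the lifting property), a point the paper leaves implicit and which is not actually required by the statement of the lemma, so it is harmless extra rigor rather than a different route.
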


\begin{proof}
Suppose $x<y$. 
We know that $\ely\ge 1$ implies 
$\el(ys)<\ely$ for some $s\ins $.
If further $\el(xs)>\elx$, then we are done.
Otherwise, $\el(xs)<\elx$. 
Let $x_{1}=xs$, $y_{1}=ys$ ($R_{x_{1}, y_{1}}(q)= R_{xy}(q)$).
Now ask if there exists $s_{1}\ins$ such that $\el(y_{1}s_{1})<\el(y_{1})$ and $\el(x_{1}s_{1})>\el(x_{1})$. If this is the case, then we are done. 
Otherwise, let $x_{2}=x_{1}s_{1}$, $y_{2}=y_{1}s_{1}\dots$. 
This algorithm will end at most $\elx$ steps 
since $\el(x)>\el(x_{1})=\elx-1>\cd>\el(e)=0$ and 
$\el(es)>\el(e)$ for all $s$.
\end{proof}

\begin{lem}\label{l41} 
If $f\le g\le h$ in $\nn[q]$, 
then $f'\le g'\le h'$.
\end{lem}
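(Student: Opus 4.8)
The plan is to reduce the three-term claim to the single implication $f\le g \then f'\le g'$, applied twice, and then to verify that implication by comparing coefficients directly. First I would write $f=\sum_{i} a_i q^{i}$ and $g=\sum_{i} b_i q^{i}$ with $a_i,b_i\in\nn$; by the definition of $\le$ on $\nn[q]$ recorded in Section \ref{s2}, the hypothesis $f\le g$ means precisely that $[q^{i}](f)=a_i\le b_i=[q^{i}](g)$ for every $i$. Since the formal derivative satisfies $[q^{i-1}](f')=i\,a_i$, the only thing to check is that multiplying both sides of $a_i\le b_i$ by the nonnegative integer $i$ preserves the inequality, i.e. $i\,a_i\le i\,b_i$, which says $[q^{i-1}](f')\le[q^{i-1}](g')$ for all $i$. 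Hence $f'\le g'$.

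Applying this to $f\le g$ and separately to $g\le h$ then yields $f'\le g'$ and $g'\le h'$, which together give $f'\le g'\le h'$, the assertion. I expect no genuine obstacle here: the entire content is that differentiation acts on the coefficient of $q^{i}$ by multiplication by the nonnegative scalar $i$ combined with a uniform degree shift, and multiplication by a nonnegative integer is monotone on $\nn$. The one point worth stating cleanly is that the indexing shift (the coefficient of $q^{i-1}$ in the derivative comes from the coefficient of $q^{i}$ in the original) is applied uniformly across all three polynomials, so no misalignment or cancellation can occur.

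I would remark, finally, that the three-term phrasing is tailored to its use immediately below: it is exactly what is needed to pass from the sandwiched bound $q^{n}\le\olr_{uw}(q)\le d_{n}(q)$ of Theorem \ref{th4} to the corresponding derivative bounds $nq^{n-1}\le\olr_{uw}'(q)\le d_{n}'(q)$, so keeping the statement in the $f\le g\le h$ form avoids having to invoke the lemma twice at the point of application.
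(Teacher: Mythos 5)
Your proof is correct and is essentially the same as the paper's: both arguments compare coefficients directly, using the fact that the formal derivative sends $[q^{i}](f)$ to $i\,[q^{i}](f)$ in degree $i-1$ and that multiplication by a nonnegative integer preserves inequalities in $\nn$, applied once to $f\le g$ and once to $g\le h$. The only cosmetic difference is the indexing convention (the paper writes $[q^{i}](f')=(i+1)[q^{i+1}](f)$, you write $[q^{i-1}](f')=i\,a_i$), which is the same statement after reindexing.
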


\begin{proof}
If $f\le g$, then 
$[q^{i}](f')=(i+1)[q^{i+1}](f)\le (i+1)[q^{i+1}](g)=[q^{i}](g)$ for each $i$ which means $f'\le g'$. The same is true for $g$ and $h$.
\end{proof}

To find out a closed formula for $d_{n}(q)$, we take 
the formal power series method.
\begin{lem}\label{l5}
\[
\dsum_{n=0}^{\mug}d_{n}(q)z^{n}=
\ff{1-(q+1)z^{2}}{(1+z)(1-(q+1)z)}.
\]
\end{lem}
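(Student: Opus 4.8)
The plan is to set $F(z)=\sum_{n=0}^{\infty}d_{n}(q)z^{n}$ and turn the three-term recurrence into a single functional equation for $F$. First I would multiply the recurrence $d_{n}(q)=qd_{n-1}(q)+(q+1)d_{n-2}(q)$ by $z^{n}$ and sum over $n\ge 3$, which is the range where it is valid. The left-hand side becomes $F(z)-1-qz-q^{2}z^{2}$, peeling off the three initial terms $d_{0}=1$, $d_{1}=q$, $d_{2}=q^{2}$. On the right, shifting indices gives $qz\bigl(F(z)-1-qz\bigr)$ from the first sum and $(q+1)z^{2}\bigl(F(z)-1\bigr)$ from the second, where in each case only the low-order terms lying outside the summation range are subtracted.

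Collecting the $F(z)$ terms then yields the linear equation
\[
F(z)\bigl(1-qz-(q+1)z^{2}\bigr)=1-(q+1)z^{2},
\]
after the $qz$ and $q^{2}z^{2}$ contributions cancel on the right. Solving gives
\[
F(z)=\frac{1-(q+1)z^{2}}{1-qz-(q+1)z^{2}}.
\]
The final step is to factor the denominator: a direct expansion shows $(1+z)\bigl(1-(q+1)z\bigr)=1-qz-(q+1)z^{2}$, since the linear coefficient is $1-(q+1)=-q$ and the quadratic coefficient is $-(q+1)$. Substituting this factorization produces exactly the claimed formula.

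The computation is entirely routine; the only point requiring care is the bookkeeping of the initial conditions. Because the recurrence is asserted only for $n\ge 3$ and in fact \emph{fails} at $n=2$ (it would predict $q^{2}+q+1$ rather than the prescribed $d_{2}=q^{2}$), I must begin the summation at $n=3$ and subtract the correct boundary terms. It is precisely this mismatch at $n=2$ that is responsible for the numerator $1-(q+1)z^{2}$ rather than a bare $1$, so I expect the bulk of the care to go into tracking these low-order terms; no genuine obstacle arises beyond that.
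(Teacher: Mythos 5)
Your proof is correct and follows essentially the same route as the paper: multiply the recurrence by $z^{n}$, sum over $n\ge 3$ (the range where it holds), and use the factorization $1-qz-(q+1)z^{2}=(1+z)\bigl(1-(q+1)z\bigr)$; the only cosmetic difference is that you solve directly for the full series $F(z)$, whereas the paper first solves for the tail $\sum_{n\ge 3}d_{n}(q)z^{n}$ and then adds back $1+qz+q^{2}z^{2}$. Your remark that the failure of the recurrence at $n=2$ is what produces the numerator $1-(q+1)z^{2}$ is a nice observation and entirely accurate.
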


\begin{proof}
We wish to find 
\[
D_{q}(z):= \sum_{n=0}^{\mug}d_{n}(q)z^{n}.
\]
First, 
let us compute $D^{\ge 3}_{q}(z):= \dsum_{n\ge 3}d_{n}(q)z^{n}.
$
\begin{align*}
	D^{\ge 3}_{q}(z)&=\dsum_{n\ge 3}(qd_{n-1}(q)+(q+1)d_{n-2}(q))z^{n}
	\\&=qz\dsum_{n\ge 3}d_{n-1}(q)z^{n-1}+
(q+1)z^{2}\dsum_{n\ge 3}d_{n-2}(q)z^{n-2}
	\\&=qz(q^{2}z^{2}+D^{\ge3}_{q}(z))+(q+1)z^{2}(qz+q^{2}z^{2}+D^{\ge3}_{q}(z))
\end{align*}
Thus, 
\[
D^{\ge 3}_{q}(z)=\ff{q(q(q+1)z+q^{2}+q+1)z^{3}}{(1+z)(1-(q+1)z)}
\]
and 
\[
D_{q}(z)=d_{0}(q)+d_{1}(q)z+d_{2}(q)z^{2}
+D^{\ge 3}_{q}(z)
=\ff{1-(q+1)z^{2}}{(1+z)(1-(q+1)z)}.
\]
\end{proof}

\begin{proof}[Proof of Theorem \ref{th4}]
It is easy to check for $n=1, 2$.
Suppose that 
$n=\eluw\ge 3$. By Lemma \ref{l4}, we may assume that $\el(us)>\elu, \el(ws)<\elw$ for some $s$ and 
\[
\ora{R}_{uw}(q)=q\ora{R}_{u, ws}(q)+(q+1)\ora{R}_{us, ws}(q).
\]
By inductive hypothesis, the upper bounds for $\ora{R}$-polynomials of length $\el(u, ws)=n-1, \el(us, ws)=n-2$ intervals are $d_{n-1}(q)$ and $d_{n-2}(q)$ so that 
\[
q^{n}\le \ora{R}_{uw}(q)=q\ora{R}_{u, ws}(q)+(q+1)\ora{R}_{us, ws}(q)
\le qd_{n-1}(q)+(q+1)d_{n-2}(q)=d_{n}(q).
\]
For the second inequalities, just differentiate this as in Lemma \ref{l41}.
Finally, Lemma \ref{l5} implies the last part as follows:
\begin{align*}
	D_{q}(z)&=\ff{1-(q+1)z^{2}}{(1+z)(1-(q+1)z)}
	=\ff{(1-qz-(q+1)z^{2})+qz}{(1+z)(1-(q+1)z)}
	\\&=1+
\ff{qz}{(1+z)(1-(q+1)z)}
	\\&=1-
	\k{\ff{q}{q+2}}
\ff{1}{1+z}
+
\k{\ff{q}{q+2}}
\ff{1}{(1-(q+1)z)}
	\\&=1
-\ff{q}{q+2}
\dsum_{n=0}^{\mug}(-1)^{n}z^{n}
+
\ff{q}{q+2}
\dsum_{n=0}^{\mug}(q+1)^{n}z^{n}
\end{align*}
and hence we conclude that 
\begin{align*}
	d_{n}(q)&=\ff{q}{q+2}\k{(q+1)^{n}-(-1)^{n}},
	\\d_{n}'(q)&=\ff{2}{(q+2)^{2}}\k{(q+1)^{n}-(-1)^{n}}
	+\ff{nq}{q+2}(q+1)^{n-1}
\end{align*}
for $n\ge 1$.
\end{proof}


{\renewcommand{\arraystretch}{1.5}
\begin{table}
\caption{dihedral polynomials and numbers}
\label{t2}
\begin{center}
	\begin{tabular}{c|c|c|ccccc}\h
$n$	&\mb{}\hf$d_{n}(q)$\hf\mb{}&	$d_{n}$&$d_{n}'$\\\h
$0$	&1	&1	&0\\\h
	1&	$q$&1&1	\\\h
	2&$q^{2}$&1	&2	\\\h
	3&$q^{3}+q^{2}+q$&3	&6	\\\h
	4&$q^{4}+2q^{3}+2q^{2}$&5&14		\\\h
	5&$q^{5}+3q^{4}+4q^{3}+2q^{2}+q$	&11&34	\\\h
	6&$q^{6}+4q^{5}+7q^{4}+6q^{3}+3q^{2}$&21&78	\\\h
	7&$q^{7}+5q^{6}+11q^{5}+13q^{4}+9q^{3}+3q^{2}+q$&43&178	\\\h
	8&$q^{8}+6q^{7}+16q^{5}+24q^{4}+22q^{3}+12q^{2}+4q$&85&398	\\\h
	$\vdots$&$\vdots$&$\vdots$&$\vdots$	\\\h
\end{tabular}
\end{center}
\end{table}}





\begin{cor}\label{c1}
Let $[u, w]$ be a Bruhat interval such that $n=\eluw\ge 1$. 
Then 
\[
1\le |[u, w]|\le d_{n} \te{  and  } 
n\le \|[u, w]\|\le d'_{n}.
\]
Moreover, 
\[
d_{n}=\ff{\,1\,}{3}(2^{n}-(-1)^{n})
\te{  and  } 
d_{n}'=\ff{\,2\,}{9}\k{2^{n}-(-1)^{n}+3n\cdot 2^{n-2}}\]
\end{cor}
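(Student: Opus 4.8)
The plan is to derive the corollary entirely by specializing Theorem \ref{th4} at $q=1$. Recall that for a weight $f$ one has $|f|=f(1)$ and $\|f\|=f'(1)$, that $|[u,w]|=|\olr_{uw}(q)|$ and $\|[u,w]\|=\|\olr_{uw}(q)\|$, and that by definition $d_n=d_n(1)$ while $d_n'=d_n'(1)$, where $d_n'(q)$ denotes the formal derivative of $d_n(q)$. Thus every quantity in the statement is an evaluation at $q=1$ of a polynomial already controlled by Theorem \ref{th4}, and the whole corollary reduces to plugging in $q=1$.

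First I would establish the two chains of inequalities. Theorem \ref{th4} supplies, coefficientwise, $q^{n}\le\olr_{uw}(q)\le d_{n}(q)$ and $nq^{n-1}\le\olr'_{uw}(q)\le d_{n}'(q)$. Since $\olr_{uw}(q)$, $d_{n}(q)$, and their derivatives all have nonnegative coefficients (the shift $q\mapsto q+1$ turns the $q^{m}(q-1)^{n}$ expansion into a polynomial in $\nn[q]$, and $d_{n}(q)\in\mathbf{W}$), the coefficientwise order is preserved upon evaluation at $q=1$, because evaluating a nonnegative polynomial at $q=1$ just sums nonnegative terms. Setting $q=1$ in the two chains and using $\left.q^{n}\right|_{q=1}=1$ and $\left.nq^{n-1}\right|_{q=1}=n$ then gives $1\le|[u,w]|\le d_{n}$ and $n\le\|[u,w]\|\le d_{n}'$.

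Next I would read off the closed formulas from the explicit expressions at the end of Theorem \ref{th4}. Substituting $q=1$ into $d_{n}(q)=\frac{q}{q+2}\left((q+1)^{n}-(-1)^{n}\right)$ yields at once $d_{n}=\frac{1}{3}\left(2^{n}-(-1)^{n}\right)$. Substituting $q=1$ into $d_{n}'(q)=\frac{2}{(q+2)^{2}}\left((q+1)^{n}-(-1)^{n}\right)+\frac{nq}{q+2}(q+1)^{n-1}$ gives $\frac{2}{9}\left(2^{n}-(-1)^{n}\right)+\frac{n}{3}\,2^{n-1}$, and the one-line identity $\frac{n}{3}\,2^{n-1}=\frac{2}{9}\cdot 3n\cdot 2^{n-2}$ recasts this as $\frac{2}{9}\left(2^{n}-(-1)^{n}+3n\cdot 2^{n-2}\right)$, which is the stated form.

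I do not expect a genuine obstacle here, since the corollary is a direct specialization of Theorem \ref{th4}. The only two points deserving a moment of care are (i) the justification that the coefficientwise inequalities survive the substitution $q\mapsto 1$, which rests solely on the nonnegativity of all coefficients in sight, and (ii) the trivial algebraic rearrangement identifying the two equivalent forms of $d_{n}'$.
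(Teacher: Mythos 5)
Your proposal is correct and is exactly the argument the paper intends: the corollary is stated without a separate proof precisely because it is the specialization of Theorem \ref{th4} at $q=1$, using $|[u,w]|=\ora{R}_{uw}(1)$, $\|[u,w]\|=\ora{R}'_{uw}(1)$, $d_{n}=d_{n}(1)$, $d'_{n}=d'_{n}(1)$, and the fact that coefficientwise inequalities (whose differences lie in $\nn[q]$) are preserved under evaluation at $q=1$. Your algebraic check that $\frac{n}{3}\,2^{n-1}=\frac{2}{9}\cdot 3n\cdot 2^{n-2}$ correctly reconciles the two forms of $d'_{n}$, so nothing is missing.
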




The sequence $(J_{n})_{n=0}^{\mug}$ with 
$J_{0}=0, J_{1}=1$ and 
\[J_{n}=J_{n-1}+2J_{n-2} \q n\ge 2\]
is known as \eh{Jacobsthal sequence} (The On-line Encyropedia of Integer Sequences A001045 \cite{oeis}) in combinatorics and number theory. 
The only difference between $J_{n}$ and 
our $d_{n}$ is the initial value: 
$d_{0}=1\ne 0=J_{0}$.
For $n\ge 1$,
\[
d_{n}=J_{n}=\ff{\,1\,}{3}\k{2^{n}-(-1)^{n}}.
\]

\section{Concluding remarks}\label{s5}

We end with recording several ideas for our future research.

\subsection{double $R$-polynomials}

Let $p, q$ be commutative variables.
\begin{defn}
Define the \eh{double $R$-polynomial} for $(u, w)$ by 
\[
R_{uw}(p, q)=\sum_{i=0}^{\frac{\ell-a}{2}} \gam_{a+2i} p^{(\el-a-2i)/2}(q-1)^{a+2i}
\]
where $(\g_{j})$ are positive integers such that 
\[R_{uw}(q)=\sum_{i=0}^{\frac{\ell-a}{2}} 
\g_{a+2i}\,q^{\frac{\ell-a-2i}{2}} (q-1)^{a+2i}.\]
as in Lemma \ref{l1}.
\end{defn}


Many polynomials in this articles are disguises of this 
double $R$-polynomials.
\begin{ob}
\begin{align*}
	R_{uw}(q, q)&=R_{uw}(q),
	\\R_{uw}(q+1, q+1)&=\ova{R}_{uw}(q),
	\\R_{uw}(1, q+1)&=\wt{R}_{uw}(q),
	\\R_{uw}(0, q+1)&=q^{\eluw}.
\end{align*}
\end{ob}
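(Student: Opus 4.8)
The plan is to prove all four identities by direct substitution into the defining formula for the double $R$-polynomial, reading off each specialization term by term. Recall from Lemma \ref{l1} that the ordinary $R$-polynomial admits the expansion $R_{uw}(q)=\sum_{i=0}^{(\ell-a)/2}\g_{a+2i}\,q^{(\ell-a-2i)/2}(q-1)^{a+2i}$ with $\g_\ell=1$, and that the double $R$-polynomial is built from exactly these coefficients $(\g_j)$ by replacing the factor $q^{(\ell-a-2i)/2}$ with $p^{(\ell-a-2i)/2}$ while retaining $(q-1)^{a+2i}$. Thus the first identity $R_{uw}(q,q)=R_{uw}(q)$ is immediate: setting $p=q$ collapses the double polynomial back to the single-variable expression of Lemma \ref{l1}.

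For the second identity I would substitute $p=q+1$ into the first slot and $q+1$ into the second slot, so that $(q-1)\mapsto q$. This yields $\sum_i\g_{a+2i}(q+1)^{(\ell-a-2i)/2}q^{a+2i}$, which is precisely the expansion of $R_{uw}(q+1)$ recorded just after Lemma \ref{l1}; by definition of the shifted $R$-polynomial this equals $\ova{R}_{uw}(q)$. For the third identity I would set $p=1$ and again send the second variable to $q+1$, so $(q-1)\mapsto q$. Every factor $1^{(\ell-a-2i)/2}$ becomes $1$, leaving $\sum_i\g_{a+2i}q^{a+2i}$, which is exactly the expansion $\wt{R}_{uw}(q)=\g_\ell q^\ell+\g_{\ell-2}q^{\ell-2}+\dots+\g_a q^a$ guaranteed by Lemma \ref{l1}.

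The fourth identity is the only one needing a word of care. Setting $p=0$ and sending the second variable to $q+1$ gives $\sum_i\g_{a+2i}0^{(\ell-a-2i)/2}q^{a+2i}$; every factor $0^{(\ell-a-2i)/2}$ vanishes except at the top index $i=(\ell-a)/2$, where the exponent is $0$ and we adopt the convention $0^0=1$. Since $\g_\ell=1$ and $a+2i=\ell$ at that index, only the single term $q^\ell=q^{\eluw}$ survives. I expect this $0^0$ bookkeeping to be the sole (and very minor) obstacle; once the convention is fixed, all four statements are pure substitution, so the entire Observation follows by comparing the four specializations against the known single-variable expansions already established in Lemma \ref{l1} and in the computation of $R_{uw}(q+1)$.
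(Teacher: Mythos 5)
Your proposal is correct and is exactly the verification the paper intends: the Observation is stated without proof precisely because each identity follows by the direct substitutions you carry out into the defining expansion $R_{uw}(p,q)=\sum_{i}\gamma_{a+2i}\,p^{(\ell-a-2i)/2}(q-1)^{a+2i}$, matched against Lemma \ref{l1} and the displayed formula for $R_{uw}(q+1)$. Your handling of the fourth case, where only the top term $i=(\ell-a)/2$ survives via $0^{0}=1$ and $\gamma_{\ell}=1$, is the one point requiring care, and you treat it correctly.
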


%
%
%
%
%

%
\begin{ques}
What is the recurrence of double $R$-polynomials?
\end{ques}





\subsection{Bruaht size on Bruhat graph}

We proved that 
$u\le v\then |u|\le |v|$. 
Since Bruhat order is the transitive closure of edge relations, 
it is reasonable to ask this:
\begin{ques}
Suppose $u\to v$. When $|u|\lneq |v|$ and when not?
\end{ques} 
It is probably the easiest to try the type A case first.
\subsection{extension of higher Deodhar inequality}
We showed that for each interval $[u, w]$, we have 
\[
\f_{i}(u, w)\ge \binom{\eluw}{i}
\]
for $i=0, 1, 2$. We do not know if the similar inequalities hold 
for all $i\ge 3$. If this is the case, then we always have 
\[
\dsum_{v\in [u, w]}\ora{R}_{uv}(q)\ge (1+q)^{\eluw}
\]
which looks very nice. Prove or disprove it.


\begin{thebibliography}{99}

\bibitem{billera}
Billera, Flag enumeration in polytopes, Eulerian partially ordered sets and Coxeter groups. Proceedings of the International Congress of Mathematicians. Volume IV, 2389-2415, Hindustan Book Agency, New Delhi, 2010.
\bibitem{billera-brenti}
Billera-Brenti, Francesco Quasisymmetric functions and Kazhdan-Lusztig polynomials. Israel J. Math. 184 (2011), 317-348. 
\bibitem{billey}
Billey-Lakshmibai, Singular loci of Schubert varieties, Birkh{\"{a}}user Boston, Inc., Boston, MA, 2000.
\bibitem{bb}Bj\"{o}rner-Brenti, Combinatorics of Coxeter groups, Graduate Texts in Mathematics, vol. 231, Springer-Verlag, New York, 2005.
\bibitem{blanco3}
Blanco, The complete cd-index of dihedral and universal Coxeter groups. Electron. J. Combin. 18 (2011), no. 1, Paper 174, 16 pp. 
\bibitem{blanco4}
Blanco, Shortest path poset of Bruhat intervals. J. Algebraic Combin. 38 (2013), no. 3, 585-596. 
\bibitem{boe}
Boe, A counterexample to the Gabber-Joseph conjecture. Kazhdan-Lusztig theory and related topics (Chicago, IL, 1989), 1-3, 
Contemp. Math., 139, Amer. Math. Soc., Providence, RI, 1992. 
\bibitem{brenti1}
Brenti, Combinatorial expansions of Kazhdan-Lusztig polynomials. J. London Math. Soc. (2) 55 (1997), no. 3, 448-472. 
\bibitem{brenti2}
Brenti, Upper and lower bounds for Kazhdan-Lusztig polynomials. European J. Combin. 19 (1998), no. 3, 283-297. 
\bibitem{carrell}Carrell, The {Bruhat} graph of a {Coxeter group}, a conjecture of {Deodhar}, and rational smoothness of {Schubert} varieties, Proc. Symp. Pure Math., Vol. 56, 1994, 53-61.
\bibitem{caselli}
Caselli, Non-negativity properties of $R$-polynomials. European J. Combin. 27 (2006), no. 6, 1005-1021. 
\bibitem{deodhar}
Deodhar, On some geometric aspects of Bruhat orderings. I. A finer decomposition of Bruhat cells. Invent. Math. 79 (1985), no. 3, 499-511.
\bibitem{dyer1}
Dyer, The nil Hecke ring and Deodhar's conjecture on Bruhat intervals, Invent. Math. 111 (1993), no. 3, 571-574. 
\bibitem{dyer2}
Dyer, Hecke algebras and shellings of Bruhat intervals. Compositio Math. 89 (1993), no. 1, 91-115. 
\bibitem{elias-williamson}Elias-Williamson, The Hodge theory of Soergel bimodules, Ann. of Math. (2) 180 (2014), no. 3, 1089-1136.
\bibitem{karu}
Karu, On the complete cd-index of a Bruhat interval. J. Algebraic Combin. 38 (2013), no. 3, 527-541.
\bibitem{kl}
Kazhdan-Lusztig, Representations of Coxeter groups and Hecke algebras, Invent. Math. 53 (1979), no. 2, 165-184.
\bibitem{kobayashi}
Kobayashi, Inequalities on Bruhat graphs, $R$- and Kazhdan-Lusztig polynomials, J. Comb. Th. Ser. A {120} (2013) no. 2, 470-482.
\bibitem{oeis}
The On-line Encyropedia of Integer Sequences, \texttt{https://oeis.org/}.
\bibitem{reading}
Reading, The cd-index of Bruhat intervals. Electron. J. Combin. 11 (2004), no. 1, Research Paper 74, 25 pp. 
\end{thebibliography}
\end{document}